\renewcommand{\baselinestretch}{1.0}
\DeclareMathAlphabet{\mathcalligra}{T1}{calligra}{c}{h}
\providecommand{\U}[1]{\protect\rule{.1in}{.1in}}
\newtheorem{theorem}{Theorem}
\newtheorem{proposition}[theorem]{Proposition}
\newtheorem{lemma}[theorem]{Lemma}
\newtheorem{remark}[theorem]{Remark}
\DeclareMathOperator{\arccot}{arccot}
\DeclareMathOperator{\inter}{int}
\DeclareMathOperator{\Imag}{Im}
\newcommand{\R}{\mathbb{R}}
\newcommand{\Q}{\mathbb{Q}}
\newcommand{\C}{\mathbb{C}}
\newcommand{\s}{'\emph{ s} }
\newcommand{\nap}{\nabla^{\perp}}
\def\<{{\langle}}
\def\>{{\rangle}}
\def\Jp{J^\perp}
\def\n{\nabla}
\def\d{\partial}
\def\dz{\partial}
\def\dzb{\bar{\partial}}
\def\a{\alpha}
\def\add{\a(\dz,\dz)}
\def\bdd{\b(\dz,\dz)}
\def\id{I}
\def\b{\beta}
\def\g{\gamma}
\def\s{\sigma}
\def\th{\theta}
\def\w{\omega}
\def\bea{\begin{eqnarray*} }
\def\eea{\end{eqnarray*} }
\def\be{\begin{equation} }
\def\ee{\end{equation} }
\def\nap{\nabla^\perp}
\def\qed{\ifhmode\unskip\nobreak\fi\ifmmode\ifinner
\else\hskip5 pt \fi\fi\hbox{\hskip5 pt \vrule width4 pt
height6 pt  depth1.5 pt \hskip 1pt }}
\begin{document}

\title{On the moduli space of isometric surfaces with the same mean curvature in 4-dimensional space forms}
\author{Kleanthis Polymerakis and Theodoros Vlachos}
\date{}
\maketitle

\renewcommand{\thefootnote}{\fnsymbol{footnote}}
\footnotetext{\emph{The first named author has been supported by the Alexander S. Onassis Public Benefit Foundation during his PhD studies
at the University of Ioannina.}}
\renewcommand{\thefootnote}{\arabic{footnote}}

\renewcommand{\thefootnote}{\fnsymbol{footnote}}
\footnotetext{\emph{2010 Mathematics Subject Classification.} 53C42,
53A10.}
\renewcommand{\thefootnote}{\arabic{footnote}}


\begin{abstract}
We study the moduli space of congruence classes of isometric surfaces with the same mean
curvature in 4-dimensional space forms. 
Having the same mean curvature means that there exists
a parallel vector bundle isometry between the normal bundles that preserves the mean curvature vector fields.
We prove that if both Gauss lifts of a compact surface to the twistor bundle
are not vertically harmonic, then there exist at most three nontrivial congruence classes. We show that surfaces with a 
vertically harmonic Gauss lift possess a holomorphic quadratic differential, yielding thus
a Hopf-type theorem. We prove that such surfaces
allow locally a one-parameter family of isometric deformations with the same mean curvature. 
This family is trivial only if the surface is superconformal. For such compact surfaces with non-parallel mean curvature, 
we prove that the moduli space 
is the disjoint union of two sets, each one being either finite, or a circle. In particular, for surfaces in $\R^4$ we prove
that the moduli space is a finite set, under a condition on the Euler numbers of the tangent and normal bundles.
\end{abstract}

\section{Introduction}

A basic problem in surface theory is to understand the role and the importance of the mean curvature.
Bonnet \cite{B} raised the problem to what extent a surface in a complete simply-connected 3-dimensional space form $\Q^3_c$ of curvature $c$,
is determined (up to congruence) by the metric and the mean curvature. 
Generically, a surface in $\Q^3_c$ is uniquely determined by these data. The exceptions are the Bonnet surfaces
that include the constant mean curvature (CMC) surfaces.

There has been a lot of interest in the following natural problem: given an isometric immersion $f\colon M\to \Q^3_c$ of a 2-dimensional 
Riemannian manifold $M$, how many noncongruent isometric immersions of $M$ into $\Q^3_c$ can exist with the same mean curvature with $f$?
This problem has been studied locally or globally by Bonnet \cite{B}, Cartan \cite{Ca}, Lawson \cite{L},
Tribuzy \cite{Tr}, Chern \cite{Ch2}, Roussos-Hernandez \cite{RH} and Kenmotsu \cite{K} among others.
Lawson and Tribuzy \cite{LT} proved that a compact oriented 2-dimensional Riemannian manifold 
admits at most two noncongruent isometric immersions in $\Q_{c}^3$, with the same non-constant mean curvature.
Their result was strengthened recently in \cite{JMN}, under additional assumptions on the isothermicity of the immersion. 
On the other hand,
Lawson \cite{L} proved that if $M$ is simply-connected and $f$ is a CMC surface in $\Q^3_c$,
then the space of isometric immersions with the same mean curvature is the circle $\mathbb{S}^1$, unless $f$ is totally-umbilical.
The case of non-simply-connected CMC surfaces has been studied in \cite{AAMU, BU, ST}.

Surfaces of constant mean curvature have been extensively studied. Hopf \cite{Hopf} showed the existence of a holomorphic
quadratic differential on every CMC surface in $\R^3$, and he proved that a CMC surface of genus zero is a round sphere.
His result was extended to nonflat 3-dimensional space forms by Chern \cite{Chern}. Abresch and Rosenberg \cite{AR} proved that every
CMC surface in the Riemannian products $\mathbb{S}^2\times\R$ and $\mathbb{H}^2\times\R$ possesses a holomorphic quadratic
differential, and they extended Hopf's theorem for such surfaces of genus zero. Their work and an extension of Bonnet's fundamental
theorem (cf. \cite{Da}), led to the study of the Bonnet problem for surfaces in these spaces (cf. \cite{GMM}).
In codimension greater than one, a generalization of 
CMC surfaces are the surfaces whose mean curvature vector field is parallel in the normal connection. 
Existence of holomorphic quadratic differentials, classification results 
and Hopf-type theorems have been proved for parallel mean curvature surfaces in several ambient spaces,
especially in codimension two (cf. \cite{ACT, Fe, LV, FR, TU, Hof, Chen, Yau}).
In particular, in \cite{Ken,KZ} has been proved the existence of parallel mean curvature surfaces in $\mathbb{CH}^2$ that admit non-trivial
isometric deformations preserving the length of the mean curvature vector field.

As a step towards deciphering the role of the mean curvature in codimension two and
inspired by Bonnet's question for surfaces in $\Q^3_c$, we are interested in the following problem:
{\emph{given an isometric immersion $f\colon M\to \Q^4_c$ of a 2-dimensional 
Riemannian manifold $M$, how many noncongruent isometric immersions of $M$ into $\Q^4_c$ can exist with the same mean curvature with $f$?}}
Two isometric immersions $f,\tilde{f} \colon M \to \Q_c^4$ are said to have \emph{the same mean curvature} if there exists
a parallel vector bundle isometry between their normal bundles that preserves the mean curvature vector fields.

The aim of the paper is to study the moduli space $\mathcal{M}(f)$ of congruence classes of isometric immersions that
have the same mean curvature with $f$. For this study, we assign to every surface in $\mathcal{M}(f)$ 
a holomorphic differential with values in the complexified normal bundle of $f$, that measures how far the immersions
deviate from being congruent.
Our results are mostly global in nature. 

We show that the structure of the moduli space of a surface in $\R^4$ is controlled by the behavior of its Gauss map. 
In order to state our first result, we recall that the Grassmannian $Gr(2,4)$ of oriented 2-planes in $\R^4$, can be identified 
with the product $\mathbb{S}^2_{+}\times \mathbb{S}^2_{-}$ of two spheres.
Accordingly, given an isometric immersion $f\colon M\to \R^4$, its Gauss map $g\colon M\to Gr(2,4)$
decomposes into a pair of maps as $g=(g_{+},g_{-})\colon M\to \mathbb{S}^2_{+}\times \mathbb{S}^2_{-}$.
The following result is a Lawson-Tribuzy type theorem \cite{LT} for compact surfaces in $\R^4$.

\begin{theorem} \label{thr4}
Let $f\colon M\to \R^4$ be an isometric immersion of a compact, oriented 2-dimensional 
Riemannian manifold $M$. If both components $g_{+}$ and $g_{-}$
of the Gauss map of $f$ are not harmonic, then there exist at most three
nontrivial congruence classes of isometric immersions of $M$ into $\R^4$, that have the same mean curvature with $f$.
In particular, there exists at most one nontrivial class, if $M$ is homeomorphic to $\mathbb{S}^2$.
\end{theorem}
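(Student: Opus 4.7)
The plan is to associate to each congruence class $[\tilde f]\in \mathcal{M}(f)$ a quadratic differential on $M$, with values in the complexified normal bundle of $f$, which vanishes identically if and only if $[\tilde f]=[f]$. Specifically, using the parallel bundle isometry $\Phi\colon N_fM\to N_{\tilde f}M$ preserving the mean curvature, I would form the symmetric trace-free tensor $\alpha_f-\Phi^{-1}\circ \alpha_{\tilde f}$ and take its $(2,0)$-part in a local isothermal chart $z$ on $M$. Under the identification $Gr(2,4)\cong \mathbb{S}^2_+\times\mathbb{S}^2_-$, this normal-bundle-valued differential decomposes as two scalar pieces $\Omega_{+}^{\tilde f}\,dz^2$ and $\Omega_{-}^{\tilde f}\,dz^2$, corresponding to the two components $g_{+},g_{-}$ of the Gauss map of $f$.

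The next step is to derive a $\bar\partial$-equation for the pair $(\Omega_+^{\tilde f},\Omega_-^{\tilde f})$. Combining the Codazzi equations for $f$ and $\tilde f$ with the hypotheses that $\Phi$ is parallel and preserves the mean curvature vector, a direct computation should express $\bar\partial \Omega_{\pm}^{\tilde f}$ as $\Omega_{\pm}^{\tilde f}$ multiplied by a quantity that depends only on $f$ and encodes the tension field of $g_{\pm}$. This identity reveals, on the one hand, that $\Omega_{\pm}^{\tilde f}$ is holomorphic precisely when $g_{\pm}$ is harmonic, and on the other, that the assumption that \emph{both} $g_+$ and $g_-$ fail to be harmonic forces the two differentials into a genuinely coupled non-holomorphic system.

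Comparison of two distinct nontrivial classes $[\tilde f_1],\ [\tilde f_2]\in\mathcal{M}(f)$ then gives the counting. The corresponding differentials $\Omega_{\pm}^{\tilde f_j}$ satisfy $\bar\partial$-equations of the same form with coefficients depending on the two classes, so on the dense open subset of $M$ where the tension fields of $g_{\pm}$ do not vanish one can divide through and extract pointwise \emph{algebraic} identities relating the values $\Omega_{\pm}^{\tilde f_j}$; continuity and compactness then propagate these identities to all of $M$. A careful analysis should show that each of $\Omega_+^{\tilde f}$ and $\Omega_-^{\tilde f}$ is pinned down to at most two values at each point, yielding at most four combinations and hence at most three nontrivial classes after excluding $[f]$. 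When $M$ is homeomorphic to $\mathbb{S}^2$, the absence of nonzero holomorphic quadratic differentials, together with an integration over $M$ of the $\bar\partial$-equation from the second step, should impose an extra rigidity constraint that collapses the count to at most one nontrivial class.

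The main obstacle will be the counting step: turning the coupled $\bar\partial$-system—which only holds off the harmonicity-defect loci of $g_{\pm}$—into global algebraic identities valid at every point of $M$, and verifying that these identities really do pin down each of the two differentials to a finite, explicitly bounded list of values. This is exactly where the codimension-two structure plays its decisive role: unlike in Lawson--Tribuzy's codimension-one setting, where non-constancy of the mean curvature suffices, here one must exploit the independent non-harmonicity of both $g_+$ and $g_-$ and control how the two twistor factors interact pointwise, which is the heart of the argument.
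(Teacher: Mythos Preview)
Your framework is the right one---form the distortion differential $Q=\Phi_f-T^{-1}\circ\Phi_{\tilde f}$ and split it as $Q=Q^{-}+Q^{+}$ along $N_fM\otimes\C=N_f^{-}M\oplus N_f^{+}M$---but the mechanism you describe after that contains a concrete error and misses the actual counting device.

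First, the claim that $\Omega_{\pm}^{\tilde f}$ (i.e.\ $Q^{\pm}$) is holomorphic precisely when $g_{\pm}$ is harmonic is false. The Codazzi equations for $f$ and $\tilde f$, together with $T$ parallel and $TH=\tilde H$, make $Q$ (and hence each $Q^{\pm}$) holomorphic \emph{unconditionally} as an $N_f^{\pm}M$-valued quadratic differential. What is governed by the harmonicity of $g_{\pm}$ is the holomorphicity of the $\pm$-piece $\Phi^{\pm}$ of the Hopf differential of $f$ itself, not of the distortion differential. So there is no ``coupled non-holomorphic system'' for $Q^{\pm}$ to exploit.

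Second, because of this, your counting step has no engine. The paper's mechanism is different: since $f$ and $\tilde f$ share $K$, $K_N$ and $\|H\|$, their curvature ellipses are congruent, and one can write
\[
Q^{\pm}=(1-e^{\mp i\theta^{\pm}})\Phi^{\pm}
\]
for a smooth angle function $\theta^{\pm}\in(0,2\pi)$ off the isolated zeros. Differentiating this using $\nabla^{\perp}_{\bar\partial}Q^{\pm}=0$ and $\nabla^{\perp}_{\bar\partial}\phi^{\pm}=h^{\pm}\phi^{\pm}$ yields a first-order equation for $\theta^{\pm}$, and cross-differentiating gives a \emph{quadratic} polynomial in $e^{\pm i\theta^{\pm}}$ with coefficient $A^{\pm}=i(h^{\pm}_z-|h^{\pm}|^2)$, together with $\theta^{\pm}_{z\bar z}=\mp A^{\pm}(1-e^{\pm i\theta^{\pm}})$. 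If two noncongruent surfaces in $\mathcal{N}^{\pm}(f)$ are themselves noncongruent to each other, the polynomial has three distinct roots (the two $e^{\pm i\theta_j^{\pm}}$ and $1$), forcing $A^{\pm}\equiv 0$; then $\theta^{\pm}$ is a bounded harmonic function on $M$ minus isolated points, hence constant by the maximum principle on the compact $M$, which makes $\Phi^{\pm}$ holomorphic and $g_{\pm}$ harmonic---a contradiction. This is where compactness enters, and it is absent from your sketch.

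The resulting count is not ``two values for each of $\Omega_{+},\Omega_{-}$, hence four combinations minus one''. It is the decomposition $\mathcal{M}(f)\smallsetminus\{f\}=\mathcal{M}^{*}(f)\cup\mathcal{M}^{-}(f)\cup\mathcal{M}^{+}(f)$: the argument above gives $|\mathcal{M}^{\pm}(f)|\le 1$ directly, and a further transitivity lemma (comparing $f_1,f_2,f$ via $Q^{\pm}_{12}=T_{31}\circ(Q^{\pm}_{31}-Q^{\pm}_{32})$) is needed to show $|\mathcal{M}^{*}(f)|\le 1$. For $M\simeq\mathbb{S}^2$, the scalar holomorphic quartic differential $\langle Q^{-},Q^{+}\rangle$ must vanish by Riemann--Roch, so $\mathcal{M}^{*}(f)=\emptyset$; then the same transitivity lemma prevents $\mathcal{M}^{+}(f)$ and $\mathcal{M}^{-}(f)$ from both being nonempty.
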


For surfaces in nonflat space forms, the structure of the moduli space 
is controlled by the behavior of the Gauss lifts $G_{+}\colon M\to {\mathcal Z}_{+}$ and $G_{-}\colon M\to {\mathcal Z}_{-}$.
Here ${\mathcal Z}_{+}$ and ${\mathcal Z}_{-}$ stand for the two connected components of the twistor bundle ${\mathcal Z}$ of $\Q^4_c$.

\begin{theorem}\label{main}
Let $f\colon M\to \Q_{c}^4$ be an isometric immersion of a compact, oriented 2-dimensional Riemannian manifold $M$.
If both Gauss lifts $G_{+}$ and $G_{-}$ of $f$
are not vertically harmonic, then there exist at most three
nontrivial congruence classes of isometric immersions of $M$ into $\Q_{c}^4$, that have the same mean curvature with $f$.
In particular, there exists at most one nontrivial class, if $M$ is homeomorphic to $\mathbb{S}^2$.
\end{theorem}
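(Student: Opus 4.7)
The plan is to adapt the Lawson--Tribuzy strategy from \cite{LT} to codimension two by exploiting the splitting of the complexified normal bundle induced by the twistor decomposition $\mathcal{Z}=\mathcal{Z}_+\sqcup\mathcal{Z}_-$. In a local complex coordinate $z$ on $M$, the oriented normal bundle $N_fM$ carries a natural complex structure $J^\perp$, relative to which the $(2,0)$-part of the second fundamental form decomposes into two Hopf-type differentials $\Phi^+$ and $\Phi^-$ valued in the two complex line subbundles of $N_fM\otimes\C$. A preliminary lemma, which I would record separately, identifies vertical harmonicity of the Gauss lift $G_\pm$ with a Cauchy--Riemann equation for $\Phi^\pm$; the theorem's hypothesis therefore translates into $\bar\partial\Phi^+\not\equiv 0$ and $\bar\partial\Phi^-\not\equiv 0$.

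For any competing immersion $\tilde f\in\mathcal{M}(f)$, the parallel vector bundle isometry $\varphi$ supplied by the same-mean-curvature assumption transports the analogous differentials $\tilde\Phi^\pm$ to sections of the same line bundles over $M$. Subtracting the Codazzi equations for $f$ and $\tilde f$ cancels the mean-curvature contributions, and I expect the differences $\tilde\Phi^\pm-\Phi^\pm$ to be $\bar\partial$-closed, hence holomorphic sections of $K_M^{\otimes 2}$ tensored with the appropriate normal line bundle. Combining this with the Gauss equation (which forces $\|\alpha\|^2=\|\tilde\alpha\|^2$ once metric and mean-curvature data agree) and the Ricci equation should force the meromorphic ratios $\lambda^\pm=\tilde\Phi^\pm/\Phi^\pm$ to be \emph{constant} of modulus one. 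Each such $\lambda^\pm$ will then satisfy a quadratic equation on the unit circle whose leading coefficient is essentially $\bar\partial\Phi^\pm$; the non-vertical-harmonicity hypothesis guarantees nondegeneracy, so each quadratic has at most two roots, one of which is $1$. This yields at most $2\times 2=4$ admissible pairs $(\lambda^+,\lambda^-)$, and removing the trivial pair $(1,1)$ corresponding to $\tilde f\simeq f$ leaves at most three nontrivial congruence classes, as claimed.

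For $M$ homeomorphic to $\Sf^2$, the degrees of the relevant twisted canonical bundles become negative, so the holomorphic sections constructed above must vanish identically for at least one of the two twistor components. This forces one of the ratios $\lambda^+,\lambda^-$ to be identically $1$; the surviving quadratic in the other variable then has at most two roots on the unit circle, one of which is the trivial one, producing the sharper bound of a single nontrivial class.

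The main obstacle is the middle step: extracting the precise quadratic identity for $\lambda^\pm$ and verifying that its leading coefficient is exactly the non-vertical-harmonicity obstruction, so that the hypothesis of the theorem directly kills the degeneracy. This requires a careful bookkeeping of the Gauss, Codazzi and Ricci equations written in complex form for both immersions, together with a unique-continuation argument to promote the local identity for $\lambda^\pm$ across the zeros of $\Phi^\pm$ to a global one. It is exactly here that the decomposition into the two Gauss lifts is essential: each component $G_\pm$ controls one factor $\lambda^\pm$, and if either is vertically harmonic the corresponding equation degenerates and one expects instead a full $\Sf^1$-family of deformations of Lawson CMC type, consistent with the second main theorem of the paper announced in the abstract.
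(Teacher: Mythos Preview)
Your overall architecture---split the Hopf differential as $\Phi=\Phi^{-}+\Phi^{+}$, identify non-vertical-harmonicity of $G_{\pm}$ with $\bar\partial\Phi^{\pm}\not\equiv 0$, and show that the distortion $Q^{\pm}=\Phi^{\pm}-T^{-1}\circ\tilde\Phi^{\pm}$ is holomorphic---matches the paper exactly. The genuine gap is your claim that the Gauss and Ricci equations force the ratio $\lambda^{\pm}=T^{-1}\circ\tilde\Phi^{\pm}/\Phi^{\pm}$ to be a \emph{constant} of modulus one. They do not. Gauss and Ricci (equivalently, congruence of the curvature ellipses) give $|\lambda^{\pm}|=1$ pointwise, but the phase $\theta^{\pm}$ with $\lambda^{\pm}=e^{\mp i\theta^{\pm}}$ is a genuine smooth \emph{function} on $M\smallsetminus Z^{\pm}$. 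Differentiating $Q^{\pm}=(1-e^{\mp i\theta^{\pm}})\Phi^{\pm}$ with $\bar\partial$ gives $\theta^{\pm}_{\bar z}=\mp i h^{\pm}(1-e^{\pm i\theta^{\pm}})$, where $\nabla^{\perp}_{\bar\partial}\phi^{\pm}=h^{\pm}\phi^{\pm}$; this is visibly nonzero whenever $\bar\partial\Phi^{\pm}\neq 0$. So the ``at most $2\times 2$ constants'' counting collapses: a pointwise quadratic constraint on a \emph{function} does not bound the number of global solutions.

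The paper closes this gap in a way that makes essential use of compactness, which is absent from your sketch. Differentiating once more yields a pointwise quadratic $A^{\pm}e^{\pm 2i\theta^{\pm}}-2i(\operatorname{Im}A^{\pm})e^{\pm i\theta^{\pm}}-\overline{A^{\pm}}=0$ together with $\theta^{\pm}_{z\bar z}=\mp A^{\pm}(1-e^{\pm i\theta^{\pm}})$, where $A^{\pm}=i(h^{\pm}_z-|h^{\pm}|^2)$ is a second-order object (not $\bar\partial\Phi^{\pm}$ itself). If two noncongruent competitors $f_1,f_2$ lie in $\mathcal{N}^{\pm}(f)$ with $f_1\in\mathcal{N}^{\pm}(f_2)$, then $e^{\pm i\theta^{\pm}_1},e^{\pm i\theta^{\pm}_2},1$ are three \emph{distinct} roots of this quadratic at every point, forcing $A^{\pm}\equiv 0$. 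Only then does $\theta^{\pm}$ become harmonic, and the maximum principle on the compact $M$ (after removing isolated singularities) makes it constant; this in turn forces $\Phi^{\pm}$ holomorphic, contradicting the hypothesis. The three-class bound then comes from the decomposition $\mathcal{M}(f)\smallsetminus\{f\}=\mathcal{M}^{-}(f)\cup\mathcal{M}^{+}(f)\cup\mathcal{M}^{*}(f)$, each piece containing at most one class. Your $\mathbb{S}^2$ argument is correct in spirit: the paper uses that $\langle Q^{-},Q^{+}\rangle$ is a holomorphic quartic differential, hence identically zero on $\mathbb{S}^2$, so $\mathcal{M}^{*}(f)=\emptyset$; a further triangle argument (if $f_1\in\mathcal{M}^{+}(f)$ and $f_2\in\mathcal{M}^{-}(f)$ then $f_1\in\mathcal{M}^{*}(f_2)$) rules out both $\mathcal{M}^{\pm}(f)$ being nonempty.
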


The above theorem implies that compact surfaces in $\Q^4_c$ whose both Gauss lifts are not vertically harmonic, 
do not allow nontrivial global isometric deformations that preserve the mean curvature. 

It is now interesting to study the non-generic case where at least one of the Gauss lifts is vertically harmonic. 
We recall that Ruh and Vilms \cite{RV}, and later
Jensen and Rigoli \cite{JR}, proved that both Gauss lifts are vertically harmonic if and only if 
the mean curvature vector field is parallel in the normal connection.
Such surfaces are either minimal, or they lie as CMC surfaces in a 
totally geodesic or totally umbilical hypersurface of the ambient space $\Q_c^4$ (cf. \cite{Chen,Yau}). 

Surfaces in $\Q_c^4$ with a vertically harmonic Gauss lift have holomorphic mean curvature vector field and they constitute 
a broader class than parallel mean curvature surfaces. This class contains also non-minimal surfaces with nonflat normal bundle. 
Extensively studied surfaces with a vertically harmonic Gauss lift are the Lagrangian surfaces in $\R^4$ 
with conformal or harmonic Maslov form (cf. \cite{CU1,CU, HR}).
Non-minimal superconformal surfaces in the aforementioned class generalize totally umbilical surfaces and they are characterized by the 
property that their vertically harmonic Gauss lift is holomorphic (we refer to Section \ref{Twist} for details).
Such superconformal surfaces in $\R^4$ have been locally parametrized in terms of minimal surfaces by Dajczer and Tojeiro \cite{DT}.
Here, we show that a non-minimal surface in $\Q_c^4$ with a vertically harmonic Gauss lift
possesses a holomorphic quadratic differential and we prove the following Hopf-type theorem.

\begin{theorem}\label{HT}
Let $f\colon M\to \Q^4_c$ be a non-minimal isometric immersion of a 2-dimensional oriented Riemannian manifold $M$.
If the Gauss lift $G_{\pm}$ of $f$ is vertically harmonic and $M$ is homeomorphic to $\mathbb{S}^2$, 
then $f$ is superconformal. In particular, $f$ is totally umbilical if the Euler number of its normal bundle vanishes.
\end{theorem}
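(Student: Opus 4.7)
The plan is to combine the holomorphic quadratic differential asserted in the paragraph preceding the theorem with the topological rigidity of $\mathbb{S}^2$, and then to extract total umbilicity from a degree calculation in the twistor bundle.

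First, I would invoke the statement (made just before the theorem) that a non-minimal surface $f\colon M\to\Q^4_c$ whose Gauss lift $G_\pm$ is vertically harmonic carries a holomorphic quadratic differential $Q$ on $M$. The natural construction of $Q$ from the $(2,0)$-part of the second fundamental form relative to the complex structure on the normal bundle should be arranged so that $Q\equiv 0$ is equivalent to $G_\pm$ being not merely vertically harmonic but \emph{holomorphic} into $\mathcal{Z}_\pm$.

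Since $M\cong \mathbb{S}^2$ has genus zero, every holomorphic quadratic differential on $M$ vanishes identically (the space of such differentials has dimension $\max\{0,3g-3\}=0$, by Riemann--Roch). Hence $Q\equiv 0$. By the characterization recalled in the paper that non-minimal superconformal surfaces are precisely those whose vertically harmonic Gauss lift is holomorphic, this yields at once that $f$ is superconformal, which is the first assertion.

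For the last statement, assume in addition that $\chi(N_f)=0$. For a superconformal non-minimal surface, the holomorphic Gauss lift $G_\pm\colon M\to \mathcal{Z}_\pm$ has ramification exactly at the umbilical points of $f$, since the norm of its derivative encodes the radius of the circle of curvature. I would then relate $\deg G_\pm$ to $\chi(M)$ and $\chi(N_f)$ by pulling back the Euler classes of the horizontal and vertical distributions on the twistor bundle $\mathcal{Z}_\pm\to\Q^4_c$: the horizontal part controls the tangent bundle of $M$, while the vertical part controls the normal bundle of $f$. Combined with $\chi(M)=2$, the hypothesis $\chi(N_f)=0$ should pin down $\deg G_\pm$ so sharply that, together with a Riemann--Hurwitz count on $\mathbb{S}^2$, the map $G_\pm$ is forced to be constant. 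In twistor terms this says that the image of $f$ lies in a single $J$-invariant $2$-plane at every point, so the circle of curvature degenerates to a point and $f$ is totally umbilical.

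The main obstacle is the last step. Reducing superconformality to the vanishing of $Q$ on $\mathbb{S}^2$ is essentially immediate once the preceding material is invoked, but the cohomological identification relating $\deg G_\pm$, $\chi(M)$, $\chi(N_f)$, and the umbilical locus has to be carried out carefully in the twistor formalism, with attention to the sign conventions depending on whether one works with the $+$ or the $-$ component of $\mathcal{Z}$.
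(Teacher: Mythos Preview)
Your strategy for the first assertion is essentially the paper's: the holomorphic quadratic differential is $\Psi^{\pm}=\langle\Phi^{\pm},H^{\mp}\rangle$ (Proposition~\ref{prop}(i)), and on a genus-zero surface it must vanish. One point you glossed over: $\Psi^{\pm}\equiv 0$ does not immediately say $\Phi^{\pm}\equiv 0$, because $\Psi^{\pm}$ also vanishes where $H=0$. The paper handles this by observing that $\|H\|^2$ is of absolute value type (Proposition~\ref{prop}(ii)), so its zeros are isolated; then $\Phi^{\pm}$ vanishes on an open dense set, hence everywhere, and Lemma~\ref{pseudo}(ii) gives superconformality. Your phrasing ``$Q\equiv 0$ iff $G_\pm$ is holomorphic'' hides this small but necessary step.

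Your approach to the second assertion, however, has a genuine gap and is also unnecessary. The map $G_{\pm}\colon M\to\mathcal{Z}_{\pm}$ covers $f$ via the twistor projection, so it cannot be constant unless $f$ is; Riemann--Hurwitz does not apply since $\mathcal{Z}_{\pm}$ is not a Riemann surface, and there is no globally well-defined ``vertical part'' to which it could apply either. The paper's argument is far more direct: once you know $f$ is superconformal with $\pm K_N\geq 0$ (this sign comes for free from $\Phi^{\pm}\equiv 0$ via Lemma~\ref{pseudo}(ii)), the assumption $\chi_N=0$ forces $K_N\equiv 0$ by integration. But for a superconformal surface the curvature ellipse is a circle whose radius squared is $|K_N|/2$ (equation~\eqref{axes}), so $K_N\equiv 0$ collapses every circle to a point and $f$ is totally umbilical. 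No degree theory or twistor cohomology is needed.
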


Clearly, the theorem of Hopf-Chern \cite{Hopf,Chern} is an immediate consequence of the above theorem.
This result can be also seen as an extension to the case of non-minimal surfaces, of the well-known theorem of Calabi \cite{Cal}
that a minimal surface of genus zero in the 4-sphere is superminimal.
For surfaces in $\R^4$, an alternative proof was given by Hasegawa \cite{Ha2},
with essential use of the Hyperk\"ahler structure of $\R^4$.

Dajczer and Gromoll \cite{DG2} proved that any simply-connected minimal surface
admits a one-parameter associated family of isometric deformations through minimal surfaces.
This family is trivial if and only if the surface is superconformal.
Extending their result, we are able to produce a new one-parameter family of isometric deformations
that preserve the mean curvature, for any non-minimal surface in $\Q^4_c$ with a vertically harmonic Gauss lift.
It is worth noticing that the second fundamental form of any surface in this family relates to the initial
one in a more involved way than in \cite{DG2}.

\begin{theorem}\label{AF}
Let $f\colon M\to \Q^4_c$ be a non-minimal isometric immersion of a 2-dimensional oriented and simply-connected Riemannian manifold $M$.
If the Gauss lift $G_{\pm}$ of $f$ is vertically harmonic, then:
\begin{enumerate}[topsep=0pt,itemsep=-1pt,partopsep=1ex,parsep=0.5ex,leftmargin=*, label=(\roman*), align=left, labelsep=0em]
\item There exists a one-parameter family of isometric immersions $f^{\pm}_{\th}\colon M\to \Q^4_c$, 
$\theta \in \mathbb{S}^1\simeq \R/2\pi \mathbb{Z}$, which have the same mean curvature with $f^{\pm}_0=f$.
\item If $f$ is superconformal, then $f^{\pm}_{\th}$ is congruent to $f$ for any $\th$.
\item If there exist $\th\neq \tilde{\th} \in \mathbb{S}^1$ such that $f^{\pm}_{\th}$
is congruent to $f^{\pm}_{\tilde{\th}}$, then $f$ is superconformal.
\end{enumerate}
\end{theorem}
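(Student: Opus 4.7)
The approach follows the Dajczer--Gromoll \cite{DG2} philosophy of rotating the $(2,0)$-part of the second fundamental form by $e^{i\th}$, adapted to the non-minimal setting in which the mean curvature $H$ must be preserved. The main ingredient is the holomorphic normal object produced in the proof of Theorem \ref{HT}, which singles out a line subbundle of the complexified normal bundle along which $\add$ is holomorphic up to a controlled $H$-term.

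First, I would fix local isothermal coordinates $z$ on $M$, write the induced metric as $\la^2|dz|^2$, and complexify the rank-two normal bundle. The two orthogonal almost complex structures $J_{\pm}$ on $NM$ compatible with the orientation induce a splitting $NM\otimes \C=L_{+}\oplus L_{-}$ into $(1,0)$-line subbundles, and I would decompose $\add=\Phi_{+}+\Phi_{-}$ with $\Phi_{\pm}\in \Gamma(L_{\pm})$. The superconformality of $f$ is equivalent to $\Phi_{+}\equiv 0$ or $\Phi_{-}\equiv 0$. The vertical harmonicity of $G_{\pm}$, in the form used in Section \ref{Twist} and Theorem \ref{HT}, should translate into a first-order equation expressing that $\nap_{\dzb}\Phi_{\mp}$ is proportional to $H$, with an explicit coefficient depending on $\la$.

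Next, for $\th\in \Sf^1$, I would define candidate new geometric data on $M$ consisting of the same metric $\la^2|dz|^2$, the same normal bundle and connection $\nap$, the same mean curvature $H$, and the new $(2,0)$-part
\[
\alpha_\th(\dz,\dz)=\Phi_{\pm}+e^{i\th}\Phi_{\mp},
\]
and then verify that $(\la^2|dz|^2,\alpha_\th,\nap)$ satisfies the Gauss, Codazzi and Ricci equations of $\Q^4_c$. The Gauss and Ricci equations depend only on $|\Phi_{+}|^2$, $|\Phi_{-}|^2$, $\<\Phi_{+},\Phi_{-}\>$ and $\Phi_{+}\wedge \Phi_{-}$; since $|e^{i\th}|=1$ and the subbundles $L_{+}$ and $L_{-}$ are orthogonal in $NM\otimes\C$, these quantities are unaffected by the rotation. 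For Codazzi, I would separate the $L_{\pm}$ and $L_{\mp}$ components: the $L_{\pm}$-part is identical to that for $f$, while in the $L_{\mp}$-part the rotation multiplies $\nap_{\dzb}\Phi_{\mp}$ by $e^{i\th}$, and the holomorphicity relation produced by the vertical harmonicity of $G_{\pm}$ should exactly absorb this factor. Once this is checked, the fundamental theorem of surfaces in $\Q^4_c$ (applicable since $M$ is simply-connected) yields $f^{\pm}_{\th}$, with the identity map of $NM$ playing the role of the parallel normal bundle isometry that preserves the mean curvature.

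For part (ii), if $f$ is superconformal then the rotated piece $\Phi_{\mp}$ vanishes identically, hence $\alpha_\th=\alpha$ for every $\th$ and $f^{\pm}_{\th}$ is congruent to $f$. For part (iii), the holomorphic differential that the paper associates to a pair of isometric immersions with the same mean curvature should, when evaluated on $(f,f^{\pm}_{\th})$, equal a nonzero multiple of $(1-e^{i\th})\Phi_{\mp}\,dz^2$; a congruence $f^{\pm}_{\th}\sim f^{\pm}_{\tilde\th}$ forces the corresponding differentials to coincide, giving $(e^{i\tilde\th}-e^{i\th})\Phi_{\mp}\equiv 0$ and therefore $\Phi_{\mp}\equiv 0$, i.e.\ $f$ is superconformal. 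The main obstacle is the Codazzi verification in the third paragraph: one must convert the vertical harmonicity condition into an identity of exactly the form required so that rotating only $\Phi_{\mp}$ by $e^{i\th}$ remains compatible with the unrotated $H$, and matching conventions between the twistorial formulation of harmonicity and the complex splitting of $\add$ is the delicate step. Everything else is a routine consequence of the fundamental theorem in $\Q^4_c$ combined with the holomorphic differential framework developed earlier.
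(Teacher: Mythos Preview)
Your overall strategy is exactly the paper's: decompose $\add=\phi^{-}+\phi^{+}$ along $N_fM\otimes\C=N_f^{-}M\oplus N_f^{+}M$, rotate one isotropic component by $e^{i\th}$ while keeping $H$ fixed, verify Gauss--Codazzi--Ricci, and invoke the fundamental theorem. Parts (ii) and (iii) are also argued the same way, via the distortion differential $Q_{f,f^{\pm}_{\th}}=(1-e^{\mp i\th})\Phi^{\pm}$.

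The one genuine gap is in your Codazzi mechanism. By Proposition~\ref{glphi}, vertical harmonicity of $G_{\pm}$ is \emph{equivalent} to $\Phi^{\pm}$ being holomorphic, i.e.\ $\nap_{\dzb}\phi^{\pm}=0$; it does \emph{not} say that this derivative is ``proportional to $H$''. (From your formula for the distortion differential in part (iii), your $\Phi_{\mp}$ is the paper's $\Phi^{\pm}$, so you are indeed rotating the holomorphic piece---but for the wrong stated reason.) The Codazzi check is then immediate rather than delicate:
\[
\nap_{\dzb}\bigl(\phi^{\mp}+e^{\mp i\th}\phi^{\pm}\bigr)=\nap_{\dzb}\phi^{\mp}+e^{\mp i\th}\cdot 0=\nap_{\dzb}\add=\tfrac{\lambda^{2}}{2}\nap_{\dz}H.
\]
There is no ``absorption'' of $e^{i\th}$; the rotated component contributes nothing to the $\dzb$-derivative. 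Had $\nap_{\dzb}$ of the rotated piece been a nonzero multiple of $H$ (or of $\nap_{\dz}H$), Codazzi would \emph{fail}, since the right-hand side $\tfrac{\lambda^{2}}{2}\nap_{\dz}H$ is not rotated.

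A second, smaller gap is in part (ii). Superconformality alone only gives $\Phi^{+}\equiv0$ \emph{or} $\Phi^{-}\equiv0$ (Lemma~\ref{pseudo}(ii)). To conclude that the rotated component $\Phi^{\pm}$ is the one that vanishes you need Proposition~\ref{HOLGL}: if $f$ is non-minimal, superconformal, and $G_{\pm}$ is vertically harmonic, then necessarily $\Phi^{\pm}\equiv0$ (otherwise both $\Phi^{+}$ and $\Phi^{-}$ would be holomorphic, forcing $H$ parallel, hence $K_N\equiv0$, hence $f$ totally umbilical---a contradiction). Without this step your argument for (ii) is incomplete.
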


The above theorem indicates that non-minimal surfaces with a vertically harmonic Gauss lift inherit some of the properties
of CMC surfaces in 3-dimensional space forms. For instance, we prove in Section 4 that such surfaces satisfy Ricci-like conditions
that extend the Ricci condition for CMC surfaces (cf. \cite{L}). The above family 
can be viewed as an extension of the associated family of CMC surfaces.

Although non-minimal surfaces in $\Q^4_c$ with a vertically harmonic Gauss lift share common properties with both minimal surfaces in $\Q^4_c$ and CMC surfaces
in 3-dimensional space forms, an essential difference between them is that the associated family of the above theorem does not necessarily coincide
with the whole moduli space $\mathcal{M}(f)$. For instance,
if both Gauss lifts are vertically harmonic and $f$ is not totally umbilical, then the moduli space 
is parametrized by $\mathbb{S}^1\times\mathbb{S}^1$.
However, for compact surfaces with a vertically harmonic Gauss lift, using the holomorphicity of the aforementioned quadratic differential, 
the Ricci-like conditions that such surfaces satisfy and estimates for the Euler number of their normal bundle, 
we are able to determine the structure of the moduli space under appropriate geometric or topological assumptions.

\begin{theorem}\label{verha}
Let $f\colon M\to \Q^4_c$ be an isometric immersion of a compact, oriented 2-dimensional Riemannian manifold $M$ with vertically harmonic
Gauss lift $G_{\pm}$.
\begin{enumerate}[topsep=0pt,itemsep=-1pt,partopsep=1ex,parsep=0.5ex,leftmargin=*, label=(\roman*), align=left, labelsep=0em]
\item If $f$ is superconformal, then $\mathcal{M}(f)$ contains at most one nontrivial congruence class.
\item If the mean curvature vector field of $f$ is non-parallel, then the moduli space $\mathcal{M}(f)$
is the disjoint union of two sets, each one being either finite, or the circle $\mathbb{S}^1$.
\item If $c=0$ and the Euler numbers $\chi$ and $\chi_N$ of the tangent and normal bundles satisfy 
$\mathcal{\chi}\neq \mp \mathcal{\chi}_N$, then $\mathcal{M}(f)$ is a finite set.
\end{enumerate}
\end{theorem}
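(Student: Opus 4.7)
The plan is to combine three ingredients developed in the paper: (a) the normal-bundle-valued holomorphic differential $\Phi_{\tilde f}$ assigned to each $\tilde f\in\mathcal{M}(f)$ that measures its deviation from being congruent to $f$; (b) the holomorphic quadratic differential $Q$ on $M$ produced by the vertical harmonicity of $G_\pm$ (cf.\ Theorem~\ref{HT} and its preceding discussion); and (c) the one-parameter family $\{f^{\pm}_{\th}\}_{\th\in\mathbb{S}^1}$ from Theorem~\ref{AF}. Compactness of $M$ forces these holomorphic objects into finite-dimensional spaces, which is the rigidity mechanism underlying all three statements; the Ricci-like conditions from Section~4 supply the algebraic constraints that pin down the shape of $\Phi_{\tilde f}$. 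For part~(i), the superconformality of $f$ collapses the associated family by Theorem~\ref{AF}(ii), so it suffices to bound the remaining classes. Superconformality translates into an isotropy-type condition on the second fundamental form which forces $\Phi_{\tilde f}$ to satisfy an algebraic relation admitting at most a $\mathbb{Z}_2$-ambiguity; this yields at most one nontrivial congruence class.

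For part~(ii), the hypothesis that the mean curvature of $f$ is non-parallel implies, via the Ruh--Vilms--Jensen--Rigoli characterization recalled in the introduction, that the other Gauss lift $G_\mp$ is \emph{not} vertically harmonic. The parallel bundle isometry in the definition of \textquotedblleft same mean curvature\textquotedblright\ transports the distinguished quadratic differential $Q$ to every $\tilde f \in \mathcal{M}(f)$, producing a binary invariant---essentially a sign, or equivalently the choice of which of $\tilde G_{+}$, $\tilde G_{-}$ is vertically harmonic for $\tilde f$---that partitions $\mathcal{M}(f)$ into two sets. Inside each piece, the holomorphicity of $\Phi_{\tilde f}$ together with compactness forces $\Phi_{\tilde f}$ to depend on a single global holomorphic function, which must be constant. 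The constant is either pinned to finitely many values by the Ricci-like conditions (giving a finite set) or else it freely sweeps out the circle of the associated family of Theorem~\ref{AF} (giving $\mathbb{S}^1$); the superconformal subcase is absorbed via~(i).

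For part~(iii) with $c=0$, the hypothesis $\chi\neq\mp\chi_N$ first rules out superconformality of $f$: a non-minimal superconformal surface in $\R^4$ with vertically harmonic $G_{\pm}$ (locally parametrized over a minimal surface after Dajczer--Tojeiro~\cite{DT}) satisfies the rigid identity $\chi=\mp\chi_N$ forced by its self-dual or anti-self-dual nature. Having excluded superconformality, part~(ii) allows infiniteness of $\mathcal{M}(f)$ only if one of the two subsets is a circle $\mathbb{S}^1$; this circle must then be generated by the associated family of Theorem~\ref{AF}, and an Euler-number computation (Gauss--Bonnet applied to the normal bundle, combined with the Ricci-like conditions) shows that every member of such a family satisfies $\chi=\mp\chi_N$, contradicting the hypothesis. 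The main obstacle I anticipate is precisely this last identification: matching the possible \textquotedblleft circle\textquotedblright\ component in~(ii) with the associated family and running the Gauss--Bonnet argument to produce the forbidden equality; once that is in place, everything else should follow from the finite-dimensional holomorphic constraints coming from (a) and (b).
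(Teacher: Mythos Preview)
Your proposal has the right skeleton but contains genuine gaps in each of the three parts.

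\textbf{Part (ii): the decomposition is wrong.} Your binary invariant---``which of $\tilde G_{+},\tilde G_{-}$ is vertically harmonic for $\tilde f$''---does not separate anything. If $T\colon N_fM\to N_{\tilde f}M$ is an orientation-preserving parallel isometry with $TH=\tilde H$, then $\tilde\nabla^\perp_{JX}\tilde H=T\nabla^\perp_{JX}H=\pm T J^\perp\nabla^\perp_X H=\pm \tilde J^\perp\tilde\nabla^\perp_X\tilde H$, so by Proposition~\ref{glphi} \emph{every} $\tilde f\in\mathcal{M}(f)$ has $\tilde G_\pm$ vertically harmonic. The paper's decomposition is instead $\mathcal{M}(f)=\big(\{f\}\cup\mathcal{M}^{\pm}(f)\big)\sqcup\big(\mathcal{M}^*(f)\cup\mathcal{M}^{\mp}(f)\big)$, based on whether $Q^{\mp}_{f,\tilde f}\equiv 0$. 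The crucial step you are missing is the \emph{change of basepoint}: since $G_\mp$ is not vertically harmonic, Theorem~\ref{basic}(i) gives $\mathcal{M}^*(f)\cup\mathcal{M}^{\mp}(f)=\{\tilde f\}\cup\mathcal{M}^{\pm}(\tilde f)$ for any $\tilde f\in\mathcal{M}^*(f)$, reducing the second piece to the same structure as the first. Also, ``a holomorphic function must be constant, and the constant is either pinned to finitely many values or sweeps out a circle'' is not an argument. The dichotomy \emph{finite or $\mathbb{S}^1$} for $\{f\}\cup\mathcal{M}^{\pm}(f)$ is Proposition~\ref{fins1}(i), and its proof is not a holomorphic-function argument at all: one lifts to the universal cover, parametrizes via the associated family $\tilde f^{\pm}_\theta$, shows that deck transformations act by isometries $S_\theta(\sigma)$ depending \emph{real-analytically} on $\theta$, and uses that an accumulation point forces $S_\theta\equiv I$.

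\textbf{Part (iii): the Euler-number mechanism is not the one you describe.} It is not true that ``every member of the associated family satisfies $\chi=\mp\chi_N$''; indeed $\chi$ and $\chi_N$ are fixed numbers attached to $M$ and $N_fM$, independent of the family member, so this claim has no content. The actual engine is Theorem~\ref{M2}(i): if $g_\pm$ is harmonic and $\chi\neq\mp\chi_N$, then $\mathcal{M}^{\pm}(f)$ is finite. Its proof is an \emph{eigenfunction} argument: the height functions $\langle\langle g^j_\pm,v^j_\pm\rangle\rangle$ of the Gauss maps are eigenfunctions of $\Delta+2(2\|H\|^2-K\mp K_N)$ (Lemma~\ref{eigenfunctions}), and if infinitely many $f_j$ existed one shows their height functions are linearly independent, contradicting finite-dimensionality of the eigenspace. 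The hypothesis $\chi\neq\mp\chi_N$ enters only to rule out the escape route $\Psi^\pm=c(\mathcal U^\pm)^2$ via Proposition~\ref{charact}, not via any global identity on the associated family. Your outline contains none of this machinery. The superconformal case in (iii), incidentally, is handled directly by part~(i) (at most one nontrivial class, hence finite), not by proving $\chi=\mp\chi_N$ for such surfaces.
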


The paper is organized as follows: In Section \ref{s2}, we fix the notation and give some preliminaries. In Section \ref{s3}, we discuss surfaces
with the same mean curvature and assign to a pair of such surfaces the holomorphic differential mentioned earlier.
The moduli space splits into disjoint components.
We study the structure of these components for compact surfaces and then give the proofs of
Theorems \ref{thr4} and \ref{main}. As an application of our results, we provide a short proof of Lawson-Tribuzy theorem and of a recent result \cite{HMW}
for Lagrangian surfaces in $\R^4$. Section \ref{s4} is devoted to surfaces with a vertically harmonic Gauss lift. We prove that such surfaces
satisfy Ricci-like conditions and give the proofs of Theorems \ref{HT} and \ref{AF}. 
As a consequence of Theorem \ref{AF}, we show that the moduli space of
simply-connected surfaces with non-vanishing parallel mean curvature 
vector field is the torus $\mathbb{S}^1\times \mathbb{S}^1$, and the two-parameter associated family coincides
with the one given by Eschenburg-Tribuzy \cite{ET2}. Finally, we give the proof of Theorem \ref{verha}.

\section{Preliminaries}\label{s2}

Throughout the paper, $M$ is a connected, oriented 2-dimensional Riemannian manifold.
Let $f\colon M\to \Q_c^4$ be a surface, i.e., an isometric immersion into the complete 
simply-connected 4-dimensional space form of curvature $c$.
Denote by $N_fM$ the normal bundle of $f$ and by $\nap, R^\perp$ the normal connection and its curvature tensor, respectively.
Let $\a\colon TM\times TM\to N_fM$ be the second fundamental form of $f$ and $A_{\xi}$ the symmetric endomorphism
of $TM$ defined by $\<A_{\xi}X,Y\>=\<\a(X,Y),\xi\>$, where $\xi \in N_fM$ and $\<\cdot,\cdot\>$ stands for the Riemannian metric of $\Q_c^4$.
The Gauss, Codazzi and Ricci equations for $f$ are respectively
\begin{eqnarray} 
&(K-c)\<(X\wedge Y)Z,W\>=\<\a (X,W),\a (Y,Z)\>-\<\a (X,Z),\a (Y,W)\>,&\label{Gauss} \nonumber \\
&(\nap_X \a)(Y,Z)=(\nap_Y \a)(X,Z),&\label{Cod} \nonumber \\
&R^\perp(X,Y)\xi=\a(X,A_{\xi}Y)-\a(A_{\xi}X,Y),&\label{Ric}\nonumber
\end{eqnarray}
where $K$ is the Gaussian curvature, $X,Y,Z,W \in TM$ and $(X\wedge Y)Z=\<Y,Z\>X-\<X,Z\>Y.$

The orientations of $M$ and $\Q^4_c$ induce an orientation on the normal bundle.
The \emph{normal curvature} $K_N$ of $f$ is given by
\be \label{nc}
K_N=\<R^\perp(e_1,e_2)e_4,e_3\>,
\ee
where $\{e_1, e_2\}$ and $\{e_3, e_4\}$ are positively oriented orthonormal frame fields of $TM$ and $N_fM$, respectively.
Notice that if $\tau$ is an orientation-reversing isometry of $\Q_c^4$, then $f$ and $\tau\circ f$ have opposite normal curvatures.
The Gauss and the normal curvatures satisfy the equations
\be \label{normcf}
d\w_{12}=-K \w_1\wedge\w_2,\;\;\;\;d\w_{34}=-K_N \w_1\wedge\w_2,
\ee
where $\{\w_j\}$ is the dual frame field of $\{e_j\}, 1\leq j\leq4$, and the connection forms $\w_{kl},\; 1\leq k,l\leq 4$, are given by
\be \label{connection forms}
d\w_k=\sum_{m=1}^{4}\w_{km}\wedge \w_m,\; 1\leq k \leq 4.
\ee
If $M$ is compact, the Euler-Poincar{\'e} characteristics $\mathcal{\chi}, \mathcal{\chi}_N$ of $TM$ and
$N_fM$, are given respectively, by
\begin{equation*} \label{char}
2\pi\mathcal{\chi}=\int_{M}K,\;\;\;2\pi\mathcal{\chi}_N=\int_{M}K_N.
\end{equation*}

For a symmetric section $\beta \in \Gamma(\text{Hom}(TM\times TM, N_fM))$, the \emph{ellipse associated to} $\beta$ at each $p\in M$
is defined by  $${\cal E}_{\beta}(p)=\left\{\beta(X,X):X\in T_pM, \|X\|=1\right\}.$$
It is indeed an ellipse on $N_{f}M(p)$ centered at $trace\beta(p)/2$, which may degenerate into a line segment or a point.
In particular, the ellipse associated to the second fundamental form is denoted by ${\cal E}_{f}$,
is centered at the mean curvature vector $H$ and is called the
\emph{curvature ellipse of} $f$. It is parametrized by
\be \label{ellipse}
\a(X_{\theta},X_{\theta})=H(p)+\cos 2\theta \ \frac{\a_{11}-\a_{22}}{2}+\sin 2\theta \ \a_{12},
\ee
where $X_{\theta}=\cos \theta e_1 +\sin \theta e_2$, $\a_{ij}=\a(e_i,e_j)$, $i,j=1,2$, and $\{e_1,e_2\}$ is an orthonormal basis of $T_pM$.
The Ricci equation is written equivalently at $p$ as
\be \label{Ricci}
R^\perp(e_1,e_2)=(\a_{11}-\a_{22})\wedge \a_{12}.
\ee
Clearly, the ellipse degenerates into a line segment or a point if and only if the
vectors $(\a_{11}-\a_{22})/2$ and $\a_{12}$ are linearly dependent, or equivalently, if $R^{\perp}=0$ at $p$.
At a point where the curvature ellipse is nondegenerate, $K_N$ is positive if and only if
the orientation induced on the ellipse as $X_{\theta}$ traverses positively the unit tangent 
circle, coincides with the orientation of the normal plane (cf. \cite{GR}). 
Let $\lambda_1,\lambda_2$ be the length of the semiaxes of ${\cal E}_{f}$.
Using the Gauss equation and (\ref{Ricci}), we have that (cf. \cite{Little})
\be \label{axes}
\lambda_1^2+\lambda_2^2=\|H\|^2-(K-c),\;\;\ \lambda_1 \lambda_2=\frac{1}{\pi}A({\cal E}_{f})=\frac{1}{2}|K_N|
\ee
at any point, where $A({\cal E}_{f})$ is the area of the curvature ellipse.
Therefore,
$$\|H\|^2-(K-c)\geq|K_N|.$$
A point $p\in M$ is called \emph{pseudo-umbilic} if the curvature ellipse is a circle at $p$. 
A pseudo-umbilic point is called \emph{umbilic} if the circle degenerates into a point.
From \eqref{axes} it follows that the set $M_0(f)$ of pseudo-umbilic points of $f$ is characterized as
$$M_0(f)=\left\{p\in M: \|H\|^2-(K-c)=|K_N|\right\}.$$
A surface for which any point is pseudo-umbilic is called \emph{superconformal}.
By setting $$M_0^{\pm}(f)=\{p\in M_0(f):\pm K_N\geq0 \},$$ 
it is clear that $M_0(f)=M_0^{+}(f)\cup M_0^{-}(f)$ and the set $M_1(f)$ of umbilic points is
$$M_1(f)=M_0^{+}(f)\cap M_0^{-}(f)=\{p\in M: \|H\|^2=(K-c)\}.$$

For later use we need the following elementary fact.

\begin{lemma}\label{rot}
Let $f\colon M\to \Q^4_c$ be a surface and $\g \in \Gamma(\text{Hom}(TM\times TM, N_fM))$ a symmetric
section. Assume that the ellipse ${\cal{E}}_{\g}$ associated to $\gamma$ is not a circle at a point $p\in M$.
Then, there exist positively oriented orthonormal frame fields $\{e_1,e_2\}$ of $TM$, $\{e_3,e_4\}$ of $N_fM$,
on a neighbourhood $U$ of $p$, and $\kappa,\mu\in \mathcal{C}^{\infty}(U)$ with $\kappa>|\mu|$, such that 
${\g}_{11}-{\g}_{22}=2\kappa e_3$ and ${\g}_{12}=\mu e_4$, where $\g_{ij}=\g(e_i,e_j),\; j=1,2$.
\end{lemma}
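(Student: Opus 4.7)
The plan is to rotate the tangent frame so that the two vectors $A:=(\gamma_{11}-\gamma_{22})/2$ and $B:=\gamma_{12}$, regarded as local sections of $N_fM$, become mutually orthogonal with $\|A\|>\|B\|$, and then to align the normal frame so that $e_3$ points along $A$. Geometrically, this aligns $\{e_1,e_2\}$ with the principal directions of $\mathcal{E}_\gamma$ and $e_3$ with its major semi-axis; once this is achieved the conclusion falls out with $\kappa=\|A\|$ and $|\mu|=\|B\|$.

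Starting from arbitrary positively oriented orthonormal frames $\{e_1',e_2'\}$ of $TM$ and $\{e_3',e_4'\}$ of $N_fM$ on a neighborhood of $p$, and defining $A,B$ using these, a routine computation shows that under a tangent rotation by angle $\phi$, namely $\tilde e_1=\cos\phi\,e_1'+\sin\phi\,e_2'$ and $\tilde e_2=-\sin\phi\,e_1'+\cos\phi\,e_2'$, the vectors transform as $\tilde A=\cos(2\phi)A+\sin(2\phi)B$ and $\tilde B=-\sin(2\phi)A+\cos(2\phi)B$. The central observation is that the complex-valued function
\[
\Psi := \bigl(\|A\|^2-\|B\|^2\bigr)+2i\<A,B\>
\]
transforms by $\tilde\Psi=e^{-4i\phi}\Psi$, and vanishes exactly when $\|A\|=\|B\|$ and $\<A,B\>=0$, i.e.\ precisely when $\mathcal{E}_\gamma$ is a circle (possibly a point).

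By hypothesis $\Psi(p)\neq 0$, so on a sufficiently small contractible neighborhood $U$ of $p$ the map $\Psi/|\Psi|\colon U\to \mathbb{S}^1$ is smooth and admits a smooth real lift; this produces a $\phi\in C^\infty(U)$ satisfying $e^{4i\phi}=\Psi/|\Psi|$. For this choice $\tilde\Psi=|\Psi|>0$, which simultaneously gives $\<\tilde A,\tilde B\>=0$ and $\|\tilde A\|>\|\tilde B\|$.

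To finish, set $e_1:=\tilde e_1$, $e_2:=\tilde e_2$, $\kappa:=\|\tilde A\|>0$, $e_3:=\tilde A/\kappa$, and let $e_4$ be the unique unit normal making $\{e_3,e_4\}$ positively oriented in $N_fM$. Then $\gamma_{11}-\gamma_{22}=2\kappa e_3$ by construction, and the orthogonality of $\tilde B$ to $e_3$ forces $\gamma_{12}=\tilde B=\mu e_4$ with $\mu:=\<\tilde B,e_4\>$ and $|\mu|=\|\tilde B\|<\kappa$. The only genuinely subtle point I expect is the smooth selection of $\phi$, which is handled by the openness of the non-circle condition together with the existence of a smooth logarithm for nowhere-vanishing smooth $\mathbb{S}^1$-valued maps on contractible domains.
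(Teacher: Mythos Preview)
Your proof is correct and follows essentially the same strategy as the paper's: rotate the tangent frame to align with the principal axes of $\mathcal{E}_\gamma$, then rotate the normal frame so that $e_3$ points along the major semi-axis. The only difference is cosmetic: where the paper splits into the cases $\|u\|\neq\|v\|$ and $\langle u,v\rangle\neq0$ and writes the rotation angle explicitly via $\arctan$ or $\arccot$ with a branch choice, you package the same data into the complex function $\Psi$ and invoke the existence of a smooth argument on a contractible neighbourhood---a cleaner formulation of the identical idea.
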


\begin{proof}
Let $\{\tilde{e}_1,\tilde{e}_2\}$ be a positively oriented orthonormal tangent frame field around $p$ and set $X_t=\cos te_1+\sin te_2,\; t\in \R$.
The ellipse ${\cal{E}}_{\g}(q)$ is parametrized by
$$\g(X_{t}(q),X_{t}(q))=trace\g(q)/2+\cos 2t u(q) +\sin 2t v(q),$$
where $u=(\tilde{\g}_{11}-\tilde{\g}_{22})/2,\; v=\tilde{\g}_{12}$ and $\tilde{\g}_{ij}=\g(\tilde{e}_i,\tilde{e}_j), i,j=1,2$.
Our assumption implies that at least one of the quantities $\left\|u\right\|-\|v\|$, $\left<u,v\right>$ is non-zero at $p$. 
By continuity, we have that either $\|u\|\neq \|v\|$, or $\left<u,v\right>\neq 0$ everywhere on a neighbourhood $U$ of $p$.
Let $q\in U$. The function $r(t)=\left\|\mathring{\g}(X_{t}(q),X_{t}(q))\right\|^2$, where $\mathring{\g}$ is the traceless part of $\g$,
attains its maximum at $t_0$. Clearly, $\mathring{\g}(X_{t_0}(q),X_{t_0}(q))$ is a major semiaxis of 
${\cal{E}}_{\g}(q)$ and $\mathring{\g}(X_{t_0}(q),X_{t_0+\pi/2}(q))$ is a minor semiaxis.
From $r'(t_0)=0$ and $r''(t_0)\leq0$, we obtain that
$$\sin 4t_0\left(\left\|u\right\|^2-\|v\|^2\right)(q)= 2\cos 4t_0\left<u,v\right>(q)$$
and
$$\cos 4t_0\left(\left\|u\right\|^2-\|v\|^2\right)(q)+2\sin 4t_0\left<u,v\right>(q)\geq0.$$
Define the function $\w\in\mathcal{C}^{\infty}(U)$ by 
$$\w=\frac{1}{4}\arctan\left(\frac{2\left<u,v\right>}{\left\|u\right\|^2-\|v\|^2}\right)\;\; \mbox{modulo}\;\; 2\pi,$$
if $\left\|u\right\|\neq\|v\|$ on $U$,
where the branch of $\arctan$ is such that $\cos 4\w \left(\left\|u\right\|^2-\|v\|^2\right)\geq0$.
If $\left<u,v\right>\neq0$ on $U$, then $\w$ is defined by
$$\w=\frac{1}{4}\arccot\left(\frac{\left\|u\right\|^2-\|v\|^2}{2\left<u,v\right>}\right)\;\; \mbox{modulo}\;\; 2\pi,$$ 
where the branch of $\arccot$ is such that $\sin 4\w \left<u,v\right>\geq0$.
We consider the frame field $e_1=\cos\w \tilde{e}_1+\sin\w \tilde{e}_2,\; e_2=-\sin\w \tilde{e}_1+\cos\w \tilde{e}_2$
and the positively oriented orthonormal frame field $\{e_3,e_4\}$ in the normal bundle such that
$\mathring{\g}(e_1,e_1)=\left\|\mathring{\g}(e_1,e_1)\right\|e_3$.
By the choice of $\w$, we have that $\mathring{\g}(e_1,e_1)$ is a major semiaxis of ${\cal{E}}_{\g}$.
Then, the proof follows with $\kappa=\left\|\mathring{\g}(e_1,e_1)\right\|$ and $\mu=\langle \mathring{\g}(e_1,e_2),e_4\rangle$.
\qed
\end{proof}

\subsection{Complexification and associated differentials}
\
The complexified tangent bundle $TM\otimes\mathbb{C}$ of a 2-dimensional oriented Riemannian manifold $M$, decomposes 
into the eigenspaces of the complex structure $J$, denoted by $T^{(1,0)}M$ and $T^{(0,1)}M$, corresponding to the eigenvalues
$i$ and $-i$, respectively. Let $(U,z=x+iy)$ be 
a local complex coordinate on $M$. The Wirtinger operators are defined on $U$ by $\d=\d_z=(\d_x-i\d_y)/2$, 
$\bar{\d}=\d_{\bar z}=(\d_x+i\d_y)/2$, where $\d_x=\d/\d x$ and $\d_y=\d/\d y$.

Let $E$ be a complex vector bundle over $M$ equipped with a connection $\n^E$. An \emph{$E$-valued differential $\Psi$ of $r$-order}
is an $E$-valued $r$-covariant tensor field on $M$ of holomorphic type $(r,0)$. The $r$-differential $\Psi$
is called \emph{holomorphic} (cf. \cite{BWW}) if its covariant derivative $\n^E \Psi$ has holomorphic type $(r+1,0)$. 
On $U$ with complex coordinate $z$, $\Psi$ has the form $\Psi=\psi dz^r$, where $\psi \colon U\to E$ is given by $\psi=\Psi(\dz,\dots,\dz)$.
Then $\Psi$ is holomorphic if and only if $$\n^E_{\dzb}\psi=0,$$
i.e., $\psi$ is a holomorphic section. The following result, proved in \cite{Ch,BWW}, will be used in the sequel.

\begin{lemma}\label{zeros}
Assume that the $E$-valued differential $\Psi$ is holomorphic and let $p\in M$ be such that $\Psi(p)=0$.
Let $(U,z)$ be a local complex coordinate with $z(p)=0$. Then either $\Psi \equiv 0$ on $U$; or $\Psi=z^m\Psi^{*}$,
where $m$ is a positive integer and $\Psi^{*}(p)\neq0$.
\end{lemma}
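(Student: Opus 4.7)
The plan is to reduce the bundle-valued statement to the classical order-of-vanishing theorem for scalar holomorphic functions by trivialising $E$ locally via a frame of $\nabla^{E}$-holomorphic sections. Writing $\Psi=\psi\,dz^{r}$ with $\psi=\Psi(\d,\dots,\d)$, the holomorphy hypothesis on $\Psi$ is exactly $\nabla^{E}_{\bar\partial}\psi=0$, so the problem becomes the analysis of zeros of a section of $E$ that is holomorphic with respect to the Cauchy--Riemann operator $\nabla^{E}_{\bar\partial}$.

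First, after possibly shrinking $U$, I will produce a smooth frame $\{s_{1},\dots,s_{k}\}$ of $E|_{U}$, with $k$ the complex rank of $E$, such that $\nabla^{E}_{\bar\partial}s_{j}=0$ for every $j$. Locally this amounts to solving a linear $\bar\partial$-system of the form $\partial_{\bar z}u=Au$, where $A$ is the connection matrix of $\nabla^{E}_{\bar\partial}$ in any background smooth frame, with a prescribed invertible initial value at $z=0$; such a frame exists in a neighbourhood of $p$ by the classical local solvability of the $\bar\partial$-equation, or equivalently by the Koszul--Malgrange theorem, which states that $\nabla^{E}_{\bar\partial}$ endows $E$ with a holomorphic vector bundle structure.

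Next, I will expand $\psi=\sum_{j=1}^{k}f_{j}s_{j}$ with $f_{j}\in\mathcal{C}^{\infty}(U,\C)$. The condition $\nabla^{E}_{\bar\partial}\psi=0$, combined with the Leibniz rule and $\nabla^{E}_{\bar\partial}s_{j}=0$, forces $\partial_{\bar z}f_{j}=0$ for each $j$, so every $f_{j}$ is an ordinary holomorphic function of $z$ on $U$. If all $f_{j}$ vanish identically, then $\psi\equiv 0$ and $\Psi\equiv 0$ on $U$. Otherwise, setting $m=\min\{\mathrm{ord}_{0}f_{j}:f_{j}\not\equiv 0\}$, the standard factorization for scalar holomorphic functions yields $f_{j}=z^{m}h_{j}$ with each $h_{j}$ holomorphic and at least one $h_{j}(0)\neq 0$. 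Consequently $\psi=z^{m}\psi^{*}$ with $\psi^{*}=\sum_{j}h_{j}s_{j}$ holomorphic and $\psi^{*}(p)\neq 0$, and $\Psi^{*}:=\psi^{*}\,dz^{r}$ gives the required factorization $\Psi=z^{m}\Psi^{*}$.

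The only nontrivial input is the existence of a $\nabla^{E}_{\bar\partial}$-holomorphic local frame; all the rest is bookkeeping around the scalar theorem. If one prefers to bypass Koszul--Malgrange, the same conclusion can be reached elementarily by fixing any smooth frame, Taylor-expanding $\psi$ in $z,\bar z$, and using $\nabla^{E}_{\bar\partial}\psi=0$ to show inductively that the lowest nonvanishing homogeneous part of $\psi$ at $p$ must be a purely $z$-polynomial in the fibre; its degree is the integer $m$, and its leading coefficient delivers $\psi^{*}(p)\neq 0$, after which $\psi/z^{m}$ is verified to be smooth on $U$.
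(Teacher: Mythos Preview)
Your argument is correct. The paper does not actually supply a proof of this lemma; it simply cites Chern \cite{Ch} and Bolton--Willmore--Woodward \cite{BWW}, so there is no in-paper proof to compare against line by line.

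That said, your approach via a $\nabla^{E}_{\bar\partial}$-holomorphic local frame is exactly the standard one behind those references: on a Riemann surface the $(0,1)$-part of any connection on a complex bundle is automatically integrable (no $(0,2)$-curvature obstruction), so Koszul--Malgrange furnishes a holomorphic trivialisation, after which the scalar order-of-vanishing theorem applies componentwise. Your alternative sketch via Taylor expansion in a smooth frame is closer in spirit to the elementary ``similarity principle'' argument one also finds in the literature (solve $\partial_{\bar z}F=AF$ and compare with a genuinely holomorphic $F_0$); either route is fine here. One small comment: your final clause ``after which $\psi/z^{m}$ is verified to be smooth on $U$'' in the alternative sketch is the step that actually needs the elliptic input---knowing only the leading homogeneous part is a $z$-monomial does not by itself guarantee that the quotient extends smoothly across $p$---so if you pursue that route you should make the inductive or Carleman-type argument explicit rather than leave it as a remark. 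The first argument via a holomorphic frame avoids this issue entirely and is the cleaner choice.
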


Of particular importance for our approach are two quadratic differentials
associated to a surface $f\colon M\to \Q_c^4$, as well as their relation with
the Gauss lifts of $f$ to the twistor bundle.
The second fundamental form can be $\C$-bilinearly extended to $TM\otimes\mathbb{C}$ with values in the complexified
normal bundle $N_fM\otimes\mathbb{C}$ and then decomposed into its $(k,l)$-components $\a^{(k,l)}$, $k+l=2$,
which are tensors of $k$ many 1-forms vanishing on $T^{(0,1)}M$ and $l$ many 1-forms vanishing on $T^{(1,0)}M$.
In terms of a local complex coordinate $z=x+iy$,
the metric $ds^2$ of $M$ is written as $ds^2=\lambda^2|dz|^2$,
where $\lambda > 0$ is the conformal factor.
Setting $e_1=\d_x/\lambda$ and  $e_2=\d_y/\lambda$, the components of $\a$ are given by
$$\a^{(2,0)} = \a(\dz,\dz)dz^2,\;\; \a^{(0,2)} = \overline {\a^{(2,0)}},\;\; \a^{(1,1)} = \a(\dz,\dzb)(dz\otimes d\bar{z} + d\bar{z}\otimes dz),$$
where
\begin{eqnarray} \label{defadd}
\a(\dz,\dz) =\frac{\lambda^2}{2}\big( \frac{\a_{11}-\a_{22}}{2}-i\a_{12}\big),\;\; \a_{ij}=\a(e_i,e_j), i,j=1,2,\;\; \mbox{and}\;\;
\a(\dz,\dzb)= \frac{\lambda^2}{2}H.
\end{eqnarray}
The Codazzi equation is equivalent to
\be \label{CC}
\nap_{\dzb}\add=\frac{\lambda^2}{2}\nap_{\dz}H.
\ee

The {\emph{Hopf differential of $f$}} is the quadratic $N_fM\otimes\mathbb{C}$-valued differential $\Phi=\a^{(2,0)}$ with local expression
$\Phi=\add dz^2$.
It follows from (\ref{CC}) that $\Phi$ is holomorphic if and only if the mean curvature vector field $H$ is parallel. 

Let $\Jp$ be the complex structure of $N_fM$ defined by the metric and the orientation.
The complexified normal bundle decomposes as 
$$N_fM\otimes\mathbb{C}=N_f^{-}M\oplus N_f^{+}M$$ 
into the eigenspaces $N_f^{-}M$ and $N_f^{+}M$ of $\Jp$, corresponding to the eigenvalues $i$ and $-i$, respectively. 
Any section $\xi \in N_fM\otimes\mathbb{C}$ is decomposed as $\xi=\xi^{-}+\xi^{+}$, where
$$\xi^{\pm}=\pi^{\pm}(\xi)=\frac{1}{2}(\xi\pm i\Jp \xi).$$
A section $\xi$ of $N_fM\otimes\mathbb{C}$ is called \emph{isotropic} if at any point of $M$, either
$\xi=\xi^{-}$, or $\xi=\xi^{+}$. This is equivalent to $\langle \xi,\xi\rangle =0$, where
$\langle \cdot,\cdot\rangle$ is the $\C$-bilinear extension of the metric.
Notice that $\langle \zeta, \eta \rangle =0$ for $\zeta \in N^{-}_fM$ and $\eta\in N^{+}_fM$, implies that $\zeta=0$ or $\eta=0$.
According to the above decomposition, the Hopf differential splits as
\begin{equation*}
\Phi=\Phi^{-}+\Phi^{+},\;\;\mbox{where}\;\; \Phi^{\pm}=\pi^{\pm}\circ\Phi.
\end{equation*}

\begin{lemma}\label{pseudo}
\begin{enumerate}[topsep=0pt,itemsep=-1pt,partopsep=1ex,parsep=0.5ex,leftmargin=*, label=(\roman*), align=left, labelsep=-0.4em]
\item The zero-sets of $\Phi^{\pm}$ and $\Phi$, are $M_{0}^{\pm}(f)$ and $M_1(f)$, respectively.
\item The surface $f$ is superconformal with normal curvature $\pm K_N\geq0$ if and only if $\Phi^{\pm}\equiv0$. 
In particular, if $f$ is superconformal, then $K_N$ vanishes precisely on $M_1(f)$.
\end{enumerate}
\end{lemma}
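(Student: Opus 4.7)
The plan is to reduce both statements to linear-algebraic conditions on the pair of normal vectors $u=(\a_{11}-\a_{22})/2$ and $v=\a_{12}$, computed in a local positively oriented orthonormal frame $\{e_3,e_4\}$ of $N_fM$. By~\eqref{defadd}, $\a(\dz,\dz)=(\lambda^2/2)(u-iv)$, so $\Phi(p)=0$ is equivalent to $u(p)=v(p)=0$; by the parametrisation~\eqref{ellipse} this is precisely the condition defining $M_1(f)$, which disposes of the $\Phi$-part of~(i).

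For the projections I would plug the expression for $\a(\dz,\dz)$ into $\pi^{\pm}=\tfrac{1}{2}(I\pm i\Jp)$, extend everything $\C$-linearly, and use $\Jp\circ\Jp=-I$ to collapse the outcome to $\pi^{\pm}\a(\dz,\dz)=(\lambda^2/2)\,\pi^{\pm}(u\pm \Jp v)$. Since $\pi^{\pm}$ is injective on real normal vectors, this gives the key equivalence $\Phi^{\pm}(p)=0$ iff $u(p)\pm \Jp v(p)=0$.

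To match this with the geometric condition defining $M_0^{\pm}(f)$, I would use two elementary observations. First, by~\eqref{ellipse} the curvature ellipse is a circle precisely when $\|u\|=\|v\|$ and $\langle u,v\rangle=0$, which characterises $M_0(f)$. Second, expanding~\eqref{Ricci} in the frame $\{e_3,e_4\}$ and comparing with~\eqref{nc} yields $K_N=2\langle \Jp u,v\rangle$. Hence $u=\mp \Jp v$ automatically forces $\|u\|=\|v\|$, $\langle u,v\rangle=0$ and $K_N=\pm 2\|v\|^2\geq 0$, so $p\in M_0^{\pm}(f)$; conversely, at a pseudo-umbilic point with $u\neq 0$ one must have $v=\pm \Jp u$ with the sign dictated by the sign of $K_N$, while umbilic points satisfy both equations trivially. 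This completes~(i).

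Part~(ii) is then an immediate globalisation: $\Phi^{\pm}\equiv 0$ iff $M_0^{\pm}(f)=M$, which is the same as ``$f$ is superconformal and $\pm K_N\geq 0$ on $M$''. For the closing assertion, the identity $K_N=\pm 2\|u\|^2$ valid at pseudo-umbilic non-umbilic points shows that on a superconformal surface $K_N$ vanishes exactly on $M_1(f)$. The main point requiring care is the sign bookkeeping linking $\pi^{\pm}$, $\Jp$ and the orientation of $N_fM$ that defines $K_N$; once these are fixed, the rest is a short computation in the two-dimensional normal plane.
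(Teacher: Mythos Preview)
Your proposal is correct and follows essentially the same route as the paper's proof: the paper phrases the vanishing of $\Phi^{\pm}$ as the isotropy condition $\Jp\a(\dz,\dz)=\pm i\,\a(\dz,\dz)$, which is precisely your real equation $u\pm\Jp v=0$ after separating real and imaginary parts, and then identifies this with the pseudo-umbilic condition together with the sign of $K_N$. Your version is a more explicit unpacking of the same computation; the paper's only stylistic difference is that for the final clause of~(ii) it appeals to the second identity in~\eqref{axes} rather than to your formula $K_N=\pm 2\|u\|^2$, but these are equivalent.
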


\begin{proof}
In terms of a local complex coordinate $z$ around a point $p$, it is clear that
$\Phi^{\pm}(p)=0$ if and only if $\Jp \add=\pm i\add$ at $p$.
It follows from (\ref{defadd}) that $\mathcal{E}_f(p)$ is a circle if and only if $\add$ is isotropic.
Bearing in mind \eqref{ellipse} and \eqref{Ricci}, we conclude that $\Phi^{\pm}(p)=0$ if and only
if $\mathcal{E}_f(p)$ is a circle and $\pm K_N(p)\geq0$.
Obviously, $\Phi$ vanishes precisely at the points where both $\Phi^{\pm}$ vanish, i.e., the umbilic points. 
This proves part (i), and the first assertion of part (ii) follows immediately.
If $f$ is superconformal, then the second equation in \eqref{axes} implies that the normal curvature vanishes 
precisely at the umbilic points.
\qed
\end{proof}

\subsection{Twistor spaces and Gauss lifts} \label{Twist}
\
We recall some known facts about the twistor theory of 4-dimensional space forms.
The reader may consult \cite{ES, Fr}, although the paper of Jensen and Rigoli \cite{JR} is closer to our approach.
Let $O(\Q_c^4)$ be the principal $O(4)$-bundle of orthonormal frames in $\Q_c^4$, which has two connected
components denoted by $O_{+}(\Q_c^4)$ and $O_{-}(\Q_c^4)$, corresponding to the two connected components of $O(4)$.
The twistor bundle ${\mathcal Z}$ of $\Q_c^4$ is defined as the set of all pairs $(p,\tilde{J})$, where $p\in \Q_c^4$
and $\tilde{J}$ is an orthogonal complex structure on $T_p\Q_c^4$. The twistor projection $\varrho \colon {\mathcal Z}\to \Q_c^4$
is defined by $\varrho (p,\tilde{J})=p$, and ${\mathcal Z}$ is an $O(4)/U(2)$-fiber bundle over $\Q_c^4$, which is associated to
$O(\Q_c^4)$. Indeed, at a point $p\in \Q_c^4$ and for any orthonormal frame $e=(e_1,e_2,e_3,e_4)$ of $T_p\Q_c^4$,
define an orthogonal complex structure $\tilde{J}_e$ by $$\tilde{J}_e e_1=e_2,\ \tilde{J}_e e_3=e_4,\ \tilde{J}_e^2=-\id.$$
Any orthogonal complex structure on $T_p\Q_c^4$ is equal to $\tilde{J}_e$ for some orthonormal frame $e$ of $T_p\Q_c^4$ and 
$\tilde{J}_e=\tilde{J}_{\tilde{e}}$ if and only if $\tilde{e}=eA$ for some $A\in U(2)$. Thus, the set of all
orthogonal complex structures on $T_p\Q_c^4$ is $O(4)/U(2)$ and has two connected components isomorphic to
$SO(4)/U(2)=\{\tilde{J}_e: e\ \mbox{is a}\pm \mbox{oriented frame of}\ T_p\Q_c^4\}$. Hence, the twistor bundle is
$${\mathcal Z}=O(\Q_c^4)\times_{O(4)}O(4)/U(2)=O(\Q_c^4)/U(2)$$
and its two connected components are denoted by ${\mathcal Z}_{+}$ and ${\mathcal Z}_{-}$. 
Each projection $\varrho_{\pm}\colon {\mathcal Z}_{\pm}\to \Q_c^4$ is a $P^1(\C)\simeq \mathbb{S}^2$-fiber bundle over $\Q_c^4$. 

A one-parameter family of Riemannian metrics $g_{t},\ t>0$, is defined on ${\mathcal Z}$ in a natural way, making 
$\varrho_{+}$ and $\varrho_{-}$ Riemannian submersions. 
With respect to the (common) decomposition of the tangent bundle of ${\mathcal Z}_{\pm}$ induced by the Levi-Civit\'{a}
connection of $g_{t}$ 
$$T{\mathcal Z}_{\pm}=T^h{\mathcal Z}_{\pm}\oplus T^v{\mathcal Z}_{\pm}$$
into horizontal and vertical subbundles, the metric $g_{t}$ is given by the pull-back of the metric of $\Q_c^4$ to the
horizontal subspaces and by adding the $t^2$-fold of the metric of the fibers.

Denote by $Gr_2(T\Q_c^4)$ the Grassmann bundle of oriented 2-planes tangent to $\Q_c^4$. There are projections
$$\Pi_{+}\colon Gr_2(T\Q_c^4)\to {\mathcal Z}_{+}\;\;\; \mbox{and} \;\;\; \Pi_{-}\colon Gr_2(T\Q_c^4)\to {\mathcal Z}_{-}$$
defined as follows; if $\zeta \subset T_p\Q_c^4$ is an oriented
2-plane, then $\Pi_{\pm}(p,\zeta)$ is the complex structure on $T_p\Q_c^4$ corresponding to the rotation by
$+\pi/2$ on $\zeta$ and the rotation by $\pm \pi/2$ on $\zeta^{\perp}$.
The Gauss lift $G_f\colon M\to Gr_2(T\Q_c^4)$, of an oriented surface $f\colon M\to \Q^4_c$ is defined by $G_f(p)=(f(p),f_{*}T_pM)$.
The \emph{Gauss lifts of $f$ to the twistor bundle} are the maps
$$G_{+}\colon M\to {\mathcal Z}_{+}\;\; \mbox{and}\;\;G_{-}\colon M\to {\mathcal Z}_{-},\;\;\mbox{where}\;\;  G_{\pm}=\Pi_{\pm}\circ G_f.$$
At any point $p\in M$, we obviously have $G_{\pm}(p)=(f(p),\tilde{J}_{\pm}(f(p)))$,
where 
\begin{equation*}
\tilde{J}_{\pm}(f(p))= \left\{
\begin{array}{rll}
f_{*}\circ J(p), & \mbox{on}\ f_{*}T_pM,\\
\pm \Jp(p), & \mbox{on}\ N_fM(p).
\end{array}\right.
\end{equation*}
The Gauss lift $G_{\pm}\colon M\to ({\mathcal Z}_{\pm},g_t)$ is called \emph{vertically harmonic} if its tension field has
vanishing vertical component with respect to the decomposition $T{\mathcal Z}_{\pm}=T^h{\mathcal Z}_{\pm}\oplus T^v{\mathcal Z}_{\pm}$.

Let $\{e_j\},1\leq j\leq4,$ be a $\pm$ oriented, local adapted orthonormal frame field of $\Q^4_c$, where $\{e_1,e_2\}$ is in the orientation of $TM$.
Denote by $\{\w_j\},1\leq j\leq4,$ the corresponding coframe and by $\w_{kl},\; 1\leq k,l\leq 4$, the connection forms 
given by (\ref{connection forms}).
The pull-back of $g_t$ on $M$ under $G_{\pm}$, is related to the metric $ds^2$ of $M$ as follows
$$G_{\pm}^{*}(g_t)=ds^2 + \frac{t^2}{4}\left((\w_{13}-\w_{24})^2 + (\w_{14}-\w_{23})^2\right).$$
The covariant differential of the mean curvature vector field $H=H^3e_3+H^4e_4$ is given by
\begin{eqnarray}
\nap H &=& \sum_{a=3}^{4}\big(dH^a + \sum_{b=3}^{4}H^b \w_{ba}\big)\otimes e_a = \sum_{j=1}^{2}\sum_{a=3}^{4}H^a_j \w_{j}\otimes e_a. \label{Hij}
\end{eqnarray}
The following proposition relates the vertical harmonicity of the Gauss lift $G_{\pm}$ with the holomorphicity of 
the differential $\Phi^{\pm}$ and the holomorphicity of the section $H^{\pm}$.
The equivalence of (i) and (iv) below, is a slight modification of Theorem 8.1. in \cite{JR} for space forms,
where the scalar curvature of $\Q^4_c$ is normalized to be equal to $c$. It was also proved
by Hasegawa \cite{Ha} who studied surfaces with a vertically harmonic Gauss lift.

\begin{proposition}\label{glphi}
Let $f\colon M \to \Q^4_c$ be a surface with mean curvature vector field $H$. The following are equivalent:
\begin{enumerate}[topsep=0pt,itemsep=-1pt,partopsep=1ex,parsep=0.5ex,leftmargin=*, label=(\roman*), align=left, labelsep=0em]
\item The Gauss lift $G_{\pm}\colon M\to ({\mathcal Z}_{\pm},g_t)$ of $f$ is vertically harmonic.
\item The differential $\Phi^{\pm}$ is holomorphic.
\item The section $H^{\pm}$ is anti-holomorphic.
\item $\nap_{JX}H=\pm\Jp \nap_{X}H$, for any $X\in TM$.
\end{enumerate}
\end{proposition}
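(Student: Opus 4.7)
The plan is to prove the chain (ii) $\Leftrightarrow$ (iii) $\Leftrightarrow$ (iv) $\Leftrightarrow$ (i). The first two equivalences follow algebraically from the Codazzi equation and the parallelism of $\Jp$, while (i) $\Leftrightarrow$ (iv) reduces to the tension-field computation on the twistor bundle already carried out in \cite{JR} and \cite{Ha}. For (ii) $\Leftrightarrow$ (iii), I would first note that $\Jp$, being determined by the metric and orientation of the rank-2 normal bundle, is parallel with respect to $\nap$, so the projections $\pi^{\pm} = \frac{1}{2}(\id \pm i\Jp)$ commute with $\nap$. Applying $\pi^{\pm}$ to both sides of (\ref{CC}) yields
\[
\nap_{\dzb}\add^{\pm} \;=\; \frac{\lambda^2}{2}\,\nap_{\dz}H^{\pm},
\]
and since $\lambda>0$, the holomorphicity of $\Phi^{\pm}=\add^{\pm}dz^2$ is equivalent to $\nap_{\dz}H^{\pm}=0$, that is, the anti-holomorphicity of $H^{\pm}$.

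For (iii) $\Leftrightarrow$ (iv), I would expand $\nap_{\dz}H^{\pm}$ using $\dz = \frac{\lambda}{2}(e_1 - ie_2)$ with $e_2 = Je_1$ and $H^{\pm}=\frac{1}{2}(H \pm i\Jp H)$. Using $\nap\Jp=0$ together with $(\Jp)^2=-\id$ and separating real and imaginary parts, the vanishing $\nap_{\dz}H^{\pm}=0$ reduces to the single identity $\nap_{e_2}H = \pm\Jp\nap_{e_1}H$, which by $\R$-linearity of $\nap H$ extends to $\nap_{JX}H = \pm\Jp\nap_{X}H$ for all $X\in TM$, i.e., (iv).

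For (i) $\Leftrightarrow$ (iv), I would follow \cite{JR,Ha} and compute the tension field $\tau(G_{\pm})$ of the Gauss lift with respect to $g_t$ in an adapted positively oriented orthonormal frame $\{e_j\}$. In the decomposition $T\mathcal{Z}_{\pm} = T^h\mathcal{Z}_{\pm} \oplus T^v\mathcal{Z}_{\pm}$, the horizontal component of $\tau(G_{\pm})$ is $f_{*}\tau(f)$, while the vertical component at $G_{\pm}(p)$ lies in the tangent space to the fiber of $\varrho_{\pm}$ over $f(p)$ and, via the pullback formula for $g_t$, is controlled by the 1-forms $\w_{13}\mp\w_{24}$ and $\w_{14}\pm\w_{23}$ (the form $\w_{34}$ is annihilated because it generates rotations of the normal plane preserving $\tilde J_{\pm}$). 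Expanding these forms with the structure equations (\ref{connection forms}) and expressing them through the coefficients $H^a_j$ of (\ref{Hij}), the vertical part of $\tau(G_{\pm})$ vanishes precisely when $\nap_{JX}H = \pm\Jp\nap_{X}H$ for all $X\in TM$.

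The main obstacle will be this last step: extracting the vertical part of $\tau(G_{\pm})$ with respect to the Kaluza--Klein metric $g_t$ on $\mathcal{Z}_{\pm}$ requires careful bookkeeping of the Levi-Civit\`a connection of $g_t$ and of the variation of $\tilde J_{\pm}$ along the $P^1(\C)$-fiber. The remaining equivalences among (ii), (iii) and (iv) are comparatively routine, resting only on the Codazzi equation and on $\nap\Jp=0$.
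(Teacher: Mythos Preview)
Your proposal is correct and follows the paper's route exactly: the equivalences (ii)$\Leftrightarrow$(iii)$\Leftrightarrow$(iv) come from the Codazzi equation (\ref{CC}) together with $\nap\Jp=0$ (the paper states this in one line while you spell it out), and (i)$\Leftrightarrow$(iv) is obtained by quoting the explicit tension-field computation for $G_{\pm}$ from \cite{JR}, whose vertical components $B^{\pm}_5,B^{\pm}_6$ vanish precisely when $H^4_2=H^3_1$ and $H^4_1=-H^3_2$. The only slip is that the horizontal part of $\tau(G_{\pm})$ is not $f_*\tau(f)$ but rather $2(1-ct^2)H$ lifted horizontally; this has no bearing on the vertical component and hence does not affect the argument.
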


\begin{proof}
The equivalence of (ii), (iii) and (iv) is an immediate consequence of the Codazzi equation (\ref{CC}).
We prove that (i) is equivalent to (iv).
The tension field of $G_{\pm}$, in terms of an appropriate frame field $\{E^{\pm}_k,\; 1\leq k \leq 6\}$ of ${\mathcal Z}_{\pm}$, 
is given by (cf. \cite{JR})
$$\tau(G_{\pm})=\sum_{k=1}^{6}B^{\pm}_k E^{\pm}_k,$$
where
\begin{eqnarray}
B^{\pm}_j &=& 0\; \mbox{for}\; j=1,2; \;\; B^{\pm}_a = 2H^a(1-ct^2) \; \mbox{for} \; a=3,4, \nonumber \\
B^{\pm}_5 &=& 2t(H^4_2-H^3_1),\;\; B^{\pm}_6 = -2t(H^4_1+H^3_2). \nonumber
\end{eqnarray}
Its vertical component is given by $$(\tau(G_{\pm}))^v=B^{\pm}_5 E^{\pm}_5 + B^{\pm}_6 E^{\pm}_6$$
and therefore, $G_{\pm}$ is vertically harmonic if and only if $H^4_2=H^3_1$ and $H^4_1=-H^3_2$.
Using (\ref{Hij}) and taking into account the orientation of the frame field of $\Q^4_c$, the result follows after a straightforward computation.
\qed
\end{proof} 
\medskip

From the above proof it follows that if $t^2=1/c$, then $G_{\pm}$ is vertically harmonic if and only if it is harmonic. 

It is clear from Proposition \ref{glphi} that both Gauss lifts are vertically harmonic if and only if the surface has
parallel mean curvature vector field in the normal connection.

Proposition \ref{glphi} and Lemma \ref{pseudo}(ii) imply that any superconformal surface $f\colon M\to \Q^4_c$
with $\pm K_N\geq0$ has vertically harmonic Gauss lift $G_{\pm}$. The Gauss lift $G_{\pm}$ of
such surfaces is holomorphic with respect to a complex structure $\mathcal{J}$ on $\mathcal{Z}$, that
makes $(\mathcal{Z},g_t)$ a Hermitian manifold (cf. \cite{ES, JR}). The following proposition
shows that the converse is also true for non-minimal superconformal surfaces.

\begin{proposition} \label{HOLGL}
Let $f\colon M\to \Q^4_c$ be a non-minimal superconformal surface. If the Gauss lift $G_{\pm}$ of $f$ is vertically
harmonic, then $\Phi^{\pm}\equiv0$.
\end{proposition}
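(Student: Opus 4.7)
The plan is to argue by contradiction: suppose $\Phi^{\pm}\not\equiv 0$. By Proposition \ref{glphi}, vertical harmonicity of $G_{\pm}$ gives that $\Phi^{\pm}$ is holomorphic, and Lemma \ref{zeros} then forces its zero set to be discrete. So $\Phi^{\pm}$ is nowhere-zero on a dense open subset $U\subset M$. Since $f$ is superconformal, $\add$ is isotropic at every point, so pointwise it lies in one of the parallel summands of $N_fM\otimes\C=N_f^{-}M\oplus N_f^{+}M$. The non-vanishing of $\Phi^{\pm}$ on $U$ pins down $\add\in N_f^{\pm}M$ there, so $\Phi^{\mp}\equiv 0$ on $U$ and, by continuity, on all of $M$. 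By Lemma \ref{pseudo}, this means $K_N$ does not vanish on $U$.

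Next I would invoke the Codazzi equation (\ref{CC}). Because $\add=\add^{\pm}$ holds globally and $\Phi^{\pm}$ is holomorphic, the left-hand side $\nap_{\bar\partial}\add=\nap_{\bar\partial}\add^{\pm}$ vanishes identically; hence $\nap_{\partial}H=0$, and since $H$ is real this gives $\nap H\equiv 0$, so the mean curvature vector field is parallel.

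The non-minimality hypothesis guarantees that $\|H\|$ is a positive constant. Thus $e_4:=H/\|H\|$ is a parallel unit normal field, and since $\Jp$ is parallel, $e_3:=\Jp e_4$ is parallel as well. The Ricci equation then forces $R^{\perp}\equiv 0$ and consequently $K_N\equiv 0$ on $M$, contradicting the non-vanishing of $K_N$ on the nonempty open set $U$. This completes the argument.

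The delicate step is the first paragraph, where the pointwise dichotomy $\add\in N_f^{+}M\cup N_f^{-}M$ afforded by superconformality has to be upgraded, using holomorphicity of $\Phi^{\pm}$ and Lemma \ref{zeros}, to the global identity $\Phi^{\mp}\equiv 0$ together with the observation that $K_N$ is nonzero on $U$. Once this is in place, the Codazzi-Ricci part is essentially mechanical, and non-minimality is used only at the very end to rule out the possibility $H\equiv 0$ (which would obstruct the construction of the parallel normal frame $\{e_3,e_4\}$).
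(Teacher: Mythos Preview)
Your argument is correct and follows essentially the same route as the paper's proof: assume $\Phi^{\pm}\not\equiv0$, use holomorphicity to isolate zeros, conclude $\Phi^{\mp}\equiv0$ from superconformality, deduce that $H$ is parallel, and derive the contradiction $K_N\equiv0$. One small quibble: the vanishing of $R^{\perp}$ follows directly from the existence of the parallel orthonormal normal frame $\{e_3,e_4\}$ (equivalently, from $\omega_{34}=0$ and \eqref{normcf}), not from the Ricci equation, which relates $R^{\perp}$ to the second fundamental form; this is only a misattribution and does not affect the validity of your proof.
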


\begin{proof}
Arguing indirectly, assume that $\Phi^{\pm}\not\equiv0$. 
From Proposition \ref{glphi}, we know that $\Phi^{\pm}$ is holomorphic
and Lemma \ref{zeros} implies that its zeros are isolated.
From Lemma \ref{pseudo}(ii) it follows that $\Phi^{\mp}\equiv0$ and consequently
$\Phi$ is holomorphic. Then, the mean curvature vector field of $f$ is parallel. 
Hence, $K_N=0$ on $M$ and Lemma \ref{pseudo}(ii) implies that $f$ is totally umbilical, a contradiction.
\qed
\end{proof}
\medskip

\begin{remark} \label{Eucl}
{\em In the case of $\R^4$, $({\mathcal Z}_{\pm},g_t)$ is isometric to the product $\R^4 \times \mathbb{S}^2(t)$. The Grassmann bundle is trivial
$Gr_2(\R^4)\simeq \R^4 \times Gr(2,4)$ and the Gauss lift of $f$ to the Grassmann bundle is given by $G_f=(f,g)$, where
$g=(g_{+},g_{-})\colon M \to \mathbb{S}^2_{+} \times \mathbb{S}^2_{-}$ is the Gauss map of $f$ and the radius of 
$\mathbb{S}^2_{\pm}$ is $1/\sqrt2$.
The Gauss lift $G_{\pm}$ of $f$ to the twistor bundle is then given by $G_{\pm}=(f,\sqrt2tg_{\pm})$ and it is vertically harmonic if and only if
$g_{\pm}$ is harmonic.}
\end{remark}

\section{Surfaces with the same mean curvature}\label{s3}

\subsection{Bonnet pairs and the distortion differential}

Let $M$ be a 2-dimensional oriented Riemannian manifold and $f,\tilde{f}\colon M\to \Q^4_c$ be isometric immersions
with second fundamental forms $\a, \tilde{\a}$ and mean curvature vector fields $H, \tilde{H}$, respectively. 
The surfaces $f,\tilde{f}$ are said to have {\emph{the same 
mean curvature}}, if there exists a parallel vector bundle isometry $T\colon N_{f}M\to N_{\tilde f}M$
such that $TH=\tilde{H}$.

The case of minimal surfaces has been studied in \cite{DG1} and \cite{Vl2}.
In this section, we assume that all surfaces under consideration are non-minimal.

Suppose that $f,\tilde{f}\colon M \to \Q^4_c$ have the same mean curvature and let $T\colon N_{f}M\to N_{\tilde{f}}M$
be a parallel vector bundle isometry  satisfying $TH=\tilde{H}$.
After an eventual composition of one of the surfaces with an
orientation-reversing isometry of $\Q^4_c$, we may hereafter suppose that $T$ is orientation-preserving.
To such a pair $(f,\tilde{f})$ we assign a holomorphic differential which is going to play a fundamental role in the sequel.
The section of $\text{Hom}(TM\times TM,N_{f}M)$ given by 
$$D^T_{f,\tilde{f}}= \a-T^{-1}\circ \tilde{\a}$$ 
measures how far the surfaces deviate from being congruent.
Since $D^T_{f,\tilde{f}}$ is traceless, its $\mathbb{C}$-bilinear extension decomposes into its $(k,l)$-components, $k+l=2$, as
$$D^T_{f,\tilde{f}}=(D^T_{f,\tilde{f}})^{(2,0)}+(D^T_{f,\tilde{f}})^{(0,2)},\;\; 
\mbox{where}\;\; (D^T_{f,\tilde{f}})^{(0,2)}=\overline{(D^T_{f,\tilde{f}})^{(2,0)}}.$$
We are interested into the $(2,0)$-part which is given by
$$Q^T_{f,\tilde{f}}=(D^T_{f,\tilde{f}})^{(2,0)}=\Phi-T^{-1}\circ \tilde{\Phi},$$
where $\Phi, \tilde{\Phi}$ stand for the Hopf differentials of $f,\tilde{f}$, respectively.

\begin{lemma}\label{qke}
Let $f,\tilde{f}\colon M\to \Q^4_c$ be surfaces and $T\colon N_{f}M\to N_{\tilde{f}}M$ an
orientation-preserving, parallel vector bundle isometry satisfying $TH=\tilde{H}$. Then:
\begin{enumerate}[topsep=0pt,itemsep=-1pt,partopsep=1ex,parsep=0.5ex,leftmargin=*, label=(\roman*), align=left, labelsep=-0.4em]
\item The quadratic differential $Q^T_{f,\tilde{f}}$ is holomorphic and independent of $T$. 
\item The normal curvatures of the surfaces are equal and
the curvature ellipses $\mathcal{E}_{f}$, $\mathcal{E}_{\tilde{f}}$ are congruent at any point of $M$.
In particular, $M^{\pm}_0(f)=M^{\pm}_0(\tilde{f})$.
\end{enumerate}
\end{lemma}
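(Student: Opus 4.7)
The plan is to treat the two parts in sequence, with part (i) reducing to the Codazzi equation and part (ii) reducing to the already-established formulas for the axes of the curvature ellipse.

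For the holomorphicity in (i), I would fix a local complex coordinate $z$ and compute $\nap_{\dzb}\big(\a(\dz,\dz)-T^{-1}\tilde\a(\dz,\dz)\big)$. Since $T$ is parallel, it commutes with covariant differentiation in the sense that $\nap_{\dzb}(T^{-1}\eta)=T^{-1}\tilde\nabla^{\perp}_{\dzb}\eta$ for any section $\eta$ of $N_{\tilde f}M$. Applying the Codazzi identity \eqref{CC} to each surface, the computation collapses to $\tfrac{\la^{2}}{2}\big(\nap_{\dz}H-T^{-1}\tilde\nabla^{\perp}_{\dz}\tilde H\big)$, which vanishes because $T^{-1}\tilde H=H$ and $T$ is parallel. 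Hence $Q^{T}_{f,\tilde f}$ is holomorphic. For the independence from $T$, I would observe that at any point where $H\neq0$, an orientation-preserving isometry of the 2-dimensional oriented bundle $N_{\tilde f}M$ that sends $H$ to $\tilde H$ is uniquely determined (a rotation of a 2-plane that fixes a non-zero vector must be the identity). Thus if $T$ and $T'$ are two such isometries, $T=T'$ on the open set where $H\neq0$, which is dense because we are assuming $f$ is non-minimal; by continuity $T^{-1}\tilde\Phi=T'^{-1}\tilde\Phi$ everywhere.

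For part (ii), the equality of normal curvatures comes from conjugating the curvature tensors by $T$. Parallelism of $T$ gives $\tilde R^{\perp}(X,Y)\,T=T\,R^{\perp}(X,Y)$ as operators, so evaluating on a positively oriented frame $\{e_{3},e_{4}\}$ of $N_{f}M$ (whose image $\{Te_{3},Te_{4}\}$ is positively oriented in $N_{\tilde f}M$ since $T$ is orientation-preserving) yields $\tilde K_{N}=\langle\tilde R^{\perp}(e_{1},e_{2})Te_{4},Te_{3}\rangle=\langle R^{\perp}(e_{1},e_{2})e_{4},e_{3}\rangle=K_{N}$ via \eqref{nc}.

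The congruence of the curvature ellipses then follows from \eqref{axes}: since $\|\tilde H\|=\|H\|$ (because $T$ is an isometry and $TH=\tilde H$), since $f$ and $\tilde f$ induce the same Gaussian curvature $K$, and since we just showed $\tilde K_{N}=K_{N}$, the sum $\la_{1}^{2}+\la_{2}^{2}$ and the product $\la_{1}\la_{2}$ of the semiaxes of $\mathcal{E}_{f}$ coincide with those of $\mathcal{E}_{\tilde f}$; hence the two ellipses have the same pair of semiaxes and are congruent. Finally, the characterizations $M_{0}(f)=\{\|H\|^{2}-(K-c)=|K_{N}|\}$ and $M_{0}^{\pm}(f)=\{p\in M_{0}(f):\pm K_{N}\geq 0\}$ recorded in Section \ref{s2} depend only on the quantities $\|H\|,K,K_{N}$, and these are shared by $f$ and $\tilde f$; so $M_{0}^{\pm}(f)=M_{0}^{\pm}(\tilde f)$. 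I expect the main subtlety to be the uniqueness argument for $T$ on a dense open set (controlling the zero set of $H$ in the non-minimal case); everything else is routine once the parallelism of $T$ and the Codazzi identity \eqref{CC} are used.
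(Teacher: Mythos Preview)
Your approach matches the paper's in all essential respects: holomorphicity via Codazzi and parallelism of $T$; uniqueness of $T$ at points where $H\neq0$; equality of normal curvatures from conjugating $R^\perp$; congruence of ellipses from \eqref{axes}. There is, however, one genuine gap in your argument for independence from $T$.

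You claim that the open set $\{H\neq0\}$ is dense ``because we are assuming $f$ is non-minimal.'' That inference is not valid: non-minimality only means $H\not\equiv0$, so $\{H\neq0\}$ is open and non-empty, but there is no reason in this generality for the zero set of $H$ to have empty interior. Your continuity argument therefore does not go through as stated. The paper circumvents this cleanly by exploiting exactly the holomorphicity you already proved: if $S$ is another admissible bundle isometry, then $Q^{T}_{f,\tilde f}-Q^{S}_{f,\tilde f}$ is a holomorphic $N_fM\otimes\mathbb{C}$-valued quadratic differential that vanishes on the non-empty open set $\{H\neq0\}$ (where $T=S$); by Lemma~\ref{zeros} it must vanish identically on $M$. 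This replaces ``dense $+$ continuity'' with ``open $+$ identity principle,'' which is all that is available here. You anticipated that controlling the zero set of $H$ would be the subtle point; the fix is to argue at the level of the holomorphic differential rather than at the level of the bundle map $T$.
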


\begin{proof}
(i) From our assumption it follows that the section $T^{-1}\circ \tilde{\a}$
of $\text{Hom}(TM\times TM, N_fM)$ satisfies the Codazzi equation
for the data on $N_{f}M$ and thus, $Q^T_{f,\tilde{f}}$ is holomorphic by (\ref{CC}). 

Suppose that there exists another orientation-preserving
parallel vector bundle isometry $S\colon N_{f}M\to N_{\tilde{f}}M$ with $SH=\tilde{H}$.
We argue that $Q^T_{f,\tilde{f}}\equiv Q^S_{f,\tilde{f}}$.
Set $L=T^{-1}\circ S$
and $U=\{p\in M: H(p)\neq0\}$. On $N_{f}U$, $L$ preserves both of $H$ and $\Jp H$ and thus, $T=S$ on $N_{f}U$. 
Therefore, the holomorphic differential $Q^T_{f,\tilde{f}}-Q^S_{f,\tilde{f}}$ vanishes identically on the open subset $U$ of $M$. 
Then by Lemma \ref{zeros}, we obtain that $Q^T_{f,\tilde{f}}\equiv Q^S_{f,\tilde{f}}$ on $M$.

(ii) The vector bundle isometry $T$ preserves the normal curvature tensors. Since it is orientation-preserving, 
(\ref{nc}) implies that the normal curvatures of $f,\tilde{f}$ are equal.
The fact that the curvature ellipses are congruent, now follows from
(\ref{axes}) and this completes the proof.
\qed
\end{proof}
\medskip

Lemma \ref{qke}(i) allows us to assign to each pair of surfaces $(f,\tilde{f})$ with the same mean curvature,
a holomorphic differential denoted by $Q_{f,\tilde{f}}$, which is called \emph{the distortion differential of the pair} and is given by
$$Q_{f,\tilde{f}}=\Phi-T^{-1}\circ \tilde{\Phi}.$$
Obviously, $Q_{f,\tilde{f}}\equiv 0$ if and only if $f$ and $\tilde{f}$ are congruent.
To simplify the notation, we denote the distortion differential associated to the pair
$(f,\tilde{f})$ by $Q$, whenever there is no danger of confusion.
A pair $(f,\tilde{f})$ of noncongruent surfaces with the same mean curvature is called a \emph{Bonnet pair}.
In this case, the zero-set of $Q$ is denoted by $Z$ and according to Lemmas \ref{zeros} and \ref{qke}(i), consists of isolated points only.

With respect to the decomposition $N_fM\otimes\mathbb{C}=N_f^{-}M\oplus N_f^{+}M$, the distortion differential $Q$ splits as 
\begin{eqnarray}
Q=Q^{-}+Q^{+},\;\; \mbox{where }\;\;Q^{\pm}=\pi^{\pm}\circ Q.\nonumber
\end{eqnarray}
It follows from Lemma \ref{qke}(i) that each differential
\be\label{qpr}
Q^{\pm}=\Phi^{\pm}-T^{-1}\circ \tilde{\Phi}^{\pm}
\ee
is holomorphic.
According to Lemma \ref{zeros}, either $Q^{\pm}\equiv 0$, or its zero-set $Z^{\pm}$ consists of isolated points only.

\subsection{The decomposition of the moduli space}

Let $f\colon M\to \Q^4_c$ be a non-minimal oriented surface.
We denote by ${\mathcal M}(f)$ {\emph {the moduli space of congruence classes of all isometric immersions 
of $M$ into $\Q^4_c$, that have the same mean curvature with $f$}}.
Since the distortion differential of a Bonnet pair does not vanish identically, the moduli space
can be written as 
$$\mathcal{M}(f)=\mathcal{N}^{-}(f)\cup \mathcal{N}^{+}(f)\cup\{f\},$$
where
$$\mathcal{N}^{\pm}(f)=\{\tilde{f}:\; Q^{\pm}_{f, \tilde{f}}\not \equiv 0\}/\text{Isom}^+(\Q^4_c),$$
$\{f\}$ is the trivial congruence class and $\text{Isom}^+(\Q^4_c)$ is the group of orientation-preserving isometries of $\Q^4_c$.
Furthermore, $\mathcal{M}(f)$ decomposes into disjoint components as
$$\mathcal{M}(f)= \mathcal{M}^*(f)\cup \mathcal{M}^{-}(f)\cup \mathcal{M}^{+}(f)\cup\{f\},$$ 
where 
$$\mathcal{M}^{\pm}(f)=\mathcal{N}^{\pm}(f)\smallsetminus \mathcal{N}^{\mp}(f)= \{\tilde{f}:\; Q_{f, \tilde{f}}\equiv Q^{\pm}_{f, \tilde{f}}\}/\text{Isom}^+(\Q^4_c),$$
and
$$\mathcal{M}^*(f)= \mathcal{N}^{-}(f)\cap \mathcal{N}^{+}(f)=
\{\tilde{f}:\; Q^{-}_{f, \tilde{f}}\not\equiv 0\;\; \mbox{and}\;\; Q^{+}_{f, \tilde{f}}\not\equiv 0\}/\text{Isom}^+(\Q^4_c).$$
Hereafter, whenever we refer to a surface in the moduli space we mean its congruence class.

The following theorem provides information about the structure of the components of the decomposition of the moduli space
${\mathcal M}(f)$ of a compact surface and is fundamental for the proofs of our results.

\begin{theorem} \label{basic}
Let $f\colon M\to \Q^4_c$ be an isometric immersion of a compact, oriented 2-dimensional Riemannian manifold.
\begin{enumerate}[topsep=0pt,itemsep=-1pt,partopsep=1ex,parsep=0.5ex,leftmargin=*, label=(\roman*), align=left, labelsep=-0.4em]
\item If the Gauss lift $G_{\pm}$ of $f$ is not vertically harmonic, then $\mathcal{M}^{\pm}(f)$ contains at most one congruence class.
Moreover, if $\tilde{f}\in \mathcal{M}^*(f)$ then $\mathcal{M}^*(f)\cup\mathcal{M}^{\pm}(f)=\{\tilde{f}\}\cup\mathcal{M}^{\mp}(\tilde{f})$.
\item If both Gauss lifts of $f$ are not vertically harmonic, then $\mathcal{M}^{*}(f)$ contains at most one congruence class.
In particular, $\mathcal{M}^{*}(f)=\emptyset$ if $M$ is homeomorphic to $\mathbb{S}^2$.
\end{enumerate}
\end{theorem}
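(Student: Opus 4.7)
The plan hinges on the following key lemma: if $G_{\pm}$ is not vertically harmonic and $\tilde f_1, \tilde f_2$ are two surfaces with the same mean curvature as $f$ satisfying $Q^{\pm}_{f,\tilde f_i}\not\equiv 0$, then $Q^{\pm}_{f,\tilde f_1}=Q^{\pm}_{f,\tilde f_2}$. I focus on the $+$ case and argue by contradiction. Suppose $d:=Q^{+}_{f,\tilde f_1}-Q^{+}_{f,\tilde f_2}$ is a non-zero holomorphic section of $K^{2}\otimes N_f^{+}M\otimes\C$; by Lemma \ref{zeros} its zeros are isolated. On the dense open complement I trivialize this line bundle locally and write $\Phi^{+}=\varphi$, $Q^{+}_{f,\tilde f_i}=h_i$ with $h_i$ holomorphic. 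By Lemma \ref{qke}(ii) the curvature ellipses of $f$ and $\tilde f_i$ are congruent; coupled with the expressions for $|\Phi^{\pm}|^{2}$ obtained from \eqref{axes}, this yields the pointwise magnitude identity $|\Phi^{+}-h_i|=|\Phi^{+}|$, which in the trivialization reads $\text{Re}(\varphi/h_i)=1/2$. Hence $\varphi=h_i(1/2+iu_i)$ for real-valued functions $u_i$.

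Applying $\bar\partial$ and using $\bar\partial h_i=0$ give $\bar\partial\varphi=ih_i\bar\partial u_i$, so $\bar\partial u_2=(h_1/h_2)\bar\partial u_1$. Invoking $d(du_2)=0$ together with the reality of $u_i$ and holomorphicity of $h_1/h_2$, a short calculation produces the pointwise identity $\text{Im}(h_1/h_2)\,\partial\bar\partial u_1=0$. This yields two cases. If $\text{Im}(h_1/h_2)\equiv 0$, then $h_1/h_2$ is a real meromorphic function, hence a real constant; matching real and imaginary parts in $h_1(1/2+iu_1)=h_2(1/2+iu_2)$ forces this constant to be $1$ and $u_1=u_2$, so $d\equiv 0$, a contradiction. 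Otherwise $u_1$, and by symmetry $u_2$, is harmonic. Writing $u_i=\text{Re}(F_i)$ locally for holomorphic $F_i$, the relation $\bar\partial u_2=(h_1/h_2)\bar\partial u_1$ rewrites as $\overline{F_2'/F_1'}=h_1/h_2$, which is simultaneously anti-holomorphic and meromorphic, hence a global complex constant $c$. Substituting $h_1=ch_2$ back forces either $c=1$ (so $d\equiv 0$, contradiction) or $u_1,u_2$ both constants, whereupon $\bar\partial\varphi=ih_1\bar\partial u_1=0$ makes $\Phi^{+}$ holomorphic, contradicting via Proposition \ref{glphi} the hypothesis on $G_{+}$. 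Both cases are impossible, so the key lemma is established.

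With the key lemma in hand, the first assertion of (i) is immediate: for $\tilde f_1,\tilde f_2\in\mathcal{M}^{+}(f)$ the vanishing $Q^{-}_{f,\tilde f_i}\equiv 0$ combined with the lemma gives $T_1^{-1}\tilde\Phi_1=T_2^{-1}\tilde\Phi_2$, so the second fundamental forms agree via the parallel bundle isometry $T_2\circ T_1^{-1}$ and the fundamental theorem of submanifolds yields congruence. The ``moreover'' follows from the direct computation showing that $Q^{\pm}_{\tilde f,\tilde g}$ equals the image under the relevant parallel isometry of $Q^{\pm}_{f,\tilde g}-Q^{\pm}_{f,\tilde f}$, which combined with the key lemma identifies $\mathcal{M}^{*}(f)\cup\mathcal{M}^{\pm}(f)=\{\tilde g:Q^{\pm}_{f,\tilde g}=Q^{\pm}_{f,\tilde f}\}=\{\tilde f\}\cup\mathcal{M}^{\mp}(\tilde f)$. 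For part (ii), if $\tilde f\in\mathcal{M}^{*}(f)$, applying the moreover with both signs places any other $\tilde g\in\mathcal{M}^{*}(f)$ into $\mathcal{M}^{-}(\tilde f)\cap\mathcal{M}^{+}(\tilde f)=\emptyset$ (by the defining $Q^{\pm}$-conditions). On $\mathbb{S}^{2}$ the degrees are $\deg K^{2}=-4$ and $\deg N_f^{\pm}M=\mp\chi_N$, so simultaneous existence of non-zero holomorphic sections $Q^{\pm}_{f,\tilde f}$ would require both $\chi_N\leq -4$ and $\chi_N\geq 4$, an impossibility, hence $\mathcal{M}^{*}(f)=\emptyset$.

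The main obstacle is the case analysis in the key lemma, specifically the careful derivation of the pointwise identity $\text{Im}(h_1/h_2)\,\partial\bar\partial u_1=0$ and the globalization arguments in the harmonic sub-case where one equates a meromorphic with an anti-meromorphic function on the compact surface.
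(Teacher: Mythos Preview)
Your overall strategy matches the paper's: the key lemma you state is exactly the claim the paper proves first, and the way you deduce parts (i) and (ii) from it (including the $\mathbb{S}^2$ case via degree/Riemann--Roch) is correct. However, the proof of the key lemma contains a genuine error at the ``short calculation'' step.

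The identity $\text{Im}(h_1/h_2)\,\partial\bar\partial u_1=0$ does \emph{not} follow from your hypotheses. Carrying out the computation you indicate---equating $\partial_z\big((u_2)_{\bar z}\big)$ with $\partial_{\bar z}\big((u_2)_z\big)$ using $(u_2)_{\bar z}=g\,(u_1)_{\bar z}$, $(u_2)_z=\bar g\,(u_1)_z$ and holomorphicity of $g=h_1/h_2$---yields only
\[
\text{Im}\big(g'(u_1)_{\bar z}\big)=-\text{Im}(g)\,(u_1)_{z\bar z},
\]
which contains the extra term $g'(u_1)_{\bar z}$. Concretely, take $h_1=z$, $h_2=1$ in a chart; the two constraints $|\varphi|=|\varphi-1|=|\varphi-z|$ force $\varphi=\tfrac12+iu_2$ with $u_2=(x^2+y^2-x)/(2y)$ and $u_1=(x-1)/(2y)$. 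Here $\text{Im}(g)=y\neq0$ while $\Delta u_1=(x-1)/y^3\neq0$, so your claimed identity fails and the harmonic sub-case never occurs as you describe. In fact, combining the compatibility equation above with the algebraic relation $g(1+2iu_1)=1+2iu_2$ (which gives $g'(1+2iu_1)=4\,\text{Im}(g)(u_1)_z$), one finds after simplification that the correct conclusion on $\{\text{Im}(g)\neq0\}$ is
\[
(u_1)_{z\bar z}=\frac{8u_1\,|(u_1)_z|^2}{1+4u_1^2},
\]
which is precisely the statement that $\arctan(2u_1)$ is harmonic, not $u_1$.

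This is exactly the mechanism the paper exploits, but in different coordinates: writing $Q^{\pm}=(1-e^{\mp i\theta^{\pm}})\Phi^{\pm}$ with $\theta^{\pm}\in(0,2\pi)$, one has $e^{\mp i\theta^{\pm}}=-\overline{(1+2iu)}/(1+2iu)$, so $\theta^{\pm}$ equals $\pm 2\arctan(2u)$ up to an additive constant. The paper derives a quadratic equation (their (\ref{B})) whose three distinct roots $1,e^{\pm i\theta_1},e^{\pm i\theta_2}$ force the coefficient $A^{\pm}$ to vanish; then $\theta^{\pm}$ is harmonic, \emph{bounded}, extends across the isolated singularities, and is constant by the maximum principle---whence $\Phi^{\pm}$ is holomorphic. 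If you repair your argument by replacing $u_i$ with $\arctan(2u_i)$, you recover exactly this, including the crucial boundedness needed for the maximum principle (note your $u_i$ are unbounded near zeros of $h_i$). As written, though, the harmonic case analysis and the subsequent ``$\overline{F_2'/F_1'}=h_1/h_2$'' globalization do not apply, and the proof of the key lemma is incomplete.
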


For the proof of the above theorem we need a series of auxiliary results.
In view of Lemma \ref{qke}(ii), we denote by $M_0=M_0^{-}\cup M_0^{+}$ and $M_1$ 
the set of pseudo-umbilic and umbilic points of a Bonnet pair, respectively.

\begin{lemma} \label{isolated}
For any Bonnet pair $(f,\tilde{f})$ we have that $M_1$ is isolated and $M^{\pm}_0\subset Z^{\pm}$.
In particular, $M_0^{\pm}$ is isolated if $\tilde{f}\in \mathcal{N}^{\pm}(f)$.
\end{lemma}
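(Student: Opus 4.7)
The plan is to combine the characterizations of pseudo-umbilic points given in Lemma \ref{pseudo}, the fact from Lemma \ref{qke}(ii) that the pseudo-umbilic sets of $f$ and $\tilde f$ coincide, and the isolated-zeros property of holomorphic differentials from Lemma \ref{zeros}. There is no real obstacle here; the content is essentially bookkeeping once the right ingredients are lined up.

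First I would establish the inclusion $M_0^{\pm}\subset Z^{\pm}$. By Lemma \ref{qke}(ii), the curvature ellipses of $f$ and $\tilde f$ are congruent at every point, so $M_0^{\pm}(f)=M_0^{\pm}(\tilde f)$; denote this common set by $M_0^{\pm}$. Pick $p\in M_0^{\pm}$. Applying Lemma \ref{pseudo}(i) to $f$ and to $\tilde f$ separately, we get $\Phi^{\pm}(p)=0$ and $\tilde{\Phi}^{\pm}(p)=0$. Since $T$ is a vector bundle isomorphism, the identity \eqref{qpr} yields $Q^{\pm}(p)=\Phi^{\pm}(p)-T^{-1}\tilde{\Phi}^{\pm}(p)=0$, i.e.\ $p\in Z^{\pm}$.

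Next I would prove the last assertion. If $\tilde f\in\mathcal{N}^{\pm}(f)$, then by definition $Q^{\pm}\not\equiv 0$. Since $Q^{\pm}$ is a holomorphic $N_f^{\pm}M$-valued quadratic differential (holomorphicity from Lemma \ref{qke}(i) applied to the $(2,0)$-splitting), Lemma \ref{zeros} forces its zero set $Z^{\pm}$ to consist of isolated points. Combined with the inclusion above, $M_0^{\pm}\subset Z^{\pm}$ is then isolated.

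Finally, to see that $M_1$ is isolated, I would write $M_1=M_0^{+}\cap M_0^{-}\subset Z^{+}\cap Z^{-}$. Since $(f,\tilde f)$ is a Bonnet pair, $Q=Q^{+}+Q^{-}\not\equiv 0$, so at least one of $Q^{+},Q^{-}$ is not identically zero; without loss of generality $Q^{+}\not\equiv 0$. Lemma \ref{zeros} applied to $Q^{+}$ shows that $Z^{+}$ is a discrete subset of $M$, and hence $M_1\subset Z^{+}$ is isolated. This completes the proof.
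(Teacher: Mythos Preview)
Your proof is correct and follows essentially the same approach as the paper's: the inclusion $M_0^{\pm}\subset Z^{\pm}$ via Lemma \ref{pseudo}(i) and \eqref{qpr}, and the isolatedness from Lemma \ref{zeros}. The only cosmetic difference is in handling $M_1$: the paper observes directly that $M_1\subset Z$ (the zero set of $Q$ itself), which is isolated since $Q\not\equiv 0$, whereas you pass through $Z^{+}\cap Z^{-}$ and argue that at least one of $Z^{+},Z^{-}$ is discrete---both arguments are equally valid and equally short.
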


\begin{proof}
The fact that $M^{\pm}_0\subset Z^{\pm}$ follows immediately from Lemma \ref{pseudo}(i) and (\ref{qpr}).
Since $M_1\subset Z$ and $Z$ consists of isolated points, the umbilic points are isolated.
If $\tilde{f}\in \mathcal{N}^{\pm}(f)$, then $Z^{\pm}$ is isolated and this completes the proof.
\qed
\end{proof}

\begin{proposition}\label{angles}
If $\tilde{f}\in \mathcal{N}^{\pm}(f)$, then there exists
$\th^{\pm} \in \mathcal{C}^{\infty}(M\smallsetminus Z^{\pm})$
with values in $(0,2\pi)$, such that the distortion differential of the pair $(f,\tilde{f})$ satisfies
\begin{eqnarray} 
Q^{\pm}&=&(1-e^{\mp i\th^{\pm}})\Phi^{\pm}\;\; \mbox{on}\;\; M\smallsetminus Z^{\pm}. \label{general1}
\end{eqnarray}
\end{proposition}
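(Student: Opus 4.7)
The plan is to show that on $M\smallsetminus Z^{\pm}$, the section $T^{-1}\circ\tilde\Phi^{\pm}$ is a unit-modulus complex multiple of $\Phi^{\pm}$, from which \eqref{general1} follows at once. The first observation is that, since $T$ is an orientation-preserving parallel vector-bundle isometry, its $\mathbb{C}$-linear extension commutes with $\Jp$ and so preserves the decomposition $N_{f}M\otimes\mathbb{C}=N_{f}^{-}M\oplus N_{f}^{+}M$. Hence $\Phi^{\pm}$ and $T^{-1}\circ\tilde\Phi^{\pm}$ are both sections of the same complex line bundle of $N_{f}^{\pm}M$-valued $(2,0)$-tensors.

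The key step is the pointwise identity
$$\|\Phi^{\pm}\|^{2}=\tfrac{\lambda^{4}}{8}\bigl((\lambda_{1}^{2}+\lambda_{2}^{2})\mp K_{N}\bigr),$$
which I would establish by expanding $\add$ in an adapted orthonormal frame $\{e_{3},e_{4}\}$ of $N_{f}M$ with $\Jp e_{3}=e_{4}$, using \eqref{defadd}, the formula \eqref{axes} for $\lambda_{1}^{2}+\lambda_{2}^{2}$, and the expression \eqref{nc} for $K_{N}$ in terms of $(\alpha_{11}-\alpha_{22})/2$ and $\alpha_{12}$. Since the right-hand side is determined entirely by the shape of the curvature ellipse and the normal curvature, both of which agree for $f$ and $\tilde f$ by Lemma \ref{qke}(ii), and since the $\mathbb{C}$-linear extension of $T^{-1}$ is a Hermitian isometry, this yields $\|T^{-1}\circ\tilde\Phi^{\pm}\|=\|\tilde\Phi^{\pm}\|=\|\Phi^{\pm}\|$ everywhere on $M$.

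Finally, Lemma \ref{isolated} gives $M_{0}^{\pm}(f)\subset Z^{\pm}$, so by Lemma \ref{pseudo}(i) $\Phi^{\pm}$ is nowhere-vanishing on $M\smallsetminus Z^{\pm}$. The ratio $\mu^{\pm}:=(T^{-1}\circ\tilde\Phi^{\pm})/\Phi^{\pm}$ is therefore a smooth $\mathbb{C}$-valued function on $M\smallsetminus Z^{\pm}$ with $|\mu^{\pm}|=1$ pointwise. Were $\mu^{\pm}$ to equal $1$ at some point, then $Q^{\pm}=(1-\mu^{\pm})\Phi^{\pm}$ would vanish there, contradicting the definition of $Z^{\pm}$; hence $\mu^{\pm}$ takes values in $\Sf^{1}\smallsetminus\{1\}$, and defining $\theta^{\pm}(p)\in(0,2\pi)$ to be the unique number with $\mu^{\pm}(p)=e^{\mp i\theta^{\pm}(p)}$ produces the desired smooth function and the formula \eqref{general1}. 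The only computational step is the norm identity above; once that is in hand, everything else is a formal bookkeeping exercise, so I do not expect any conceptual obstacle beyond keeping track of the sign conventions in the splitting $N_{f}^{\pm}M$.
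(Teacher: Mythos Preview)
Your argument is correct and is in fact cleaner than the paper's own proof. The paper proceeds by a case analysis: on the open set $M\smallsetminus M_0$ it invokes Lemma~\ref{rot} to choose orthonormal frames $\{e_1,e_2,e_3,e_4\}$ and $\{\tilde e_1,\tilde e_2,\tilde e_3,\tilde e_4\}$ adapted to the curvature ellipses of $\alpha$ and $\beta=T^{-1}\circ\tilde\alpha$, uses congruence of the ellipses to match the semiaxes, and then reads off angles $\theta$ and $\sigma$ relating the two frames; on $\inter(M_0)$ it argues separately via isotropy of $\add$, and finally it patches the two descriptions along $\partial M_0$. Your approach bypasses all of this by working directly in the complex line bundle $N_f^{\pm}M$: once you know $\|\Phi^{\pm}\|=\|T^{-1}\circ\tilde\Phi^{\pm}\|$ and $\Phi^{\pm}\neq 0$ off $Z^{\pm}$, the ratio $\mu^{\pm}$ is a well-defined smooth $\Sf^1$-valued function avoiding $1$, and the angle drops out immediately. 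The paper's frame-based argument has the minor advantage of making the geometric meaning of $\theta^{\pm}$ explicit (it is built from the rotation $\theta$ between normal frames and the rotation $\sigma$ between tangent frames), but your route is shorter, avoids the somewhat delicate boundary-patching step, and makes the role of Lemma~\ref{qke}(ii) completely transparent. The norm identity you state is indeed the only computation needed, and it follows exactly as you outline.
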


\begin{proof}
We set $\beta=T^{-1}\circ \tilde{\a}$, where $T\colon N_fM\to N_{\tilde f}M$ is an orientation and mean curvature vector field-preserving,
parallel vector bundle isometry.

If $\inter (M_0)\neq \emptyset$, then from Lemma \ref{isolated} we obtain that $\inter (M_0)\subset M^{\mp}_0$. 
Lemma \ref{pseudo}(ii) implies that $\pm K_N<0$, $\Phi^{\mp}\equiv0$ and $\Phi^{\pm}\neq0$ on $\inter (M_0\smallsetminus Z^{\pm})$.
Let $z$ be a local complex coordinate defined on a simply-connected neighbourhood $V\subset\inter (M_0\smallsetminus Z^{\pm})$.
From Lemma \ref{qke}(ii), it follows that the isotropic sections $\add$ and $\bdd$ have the same length.
Hence, there exists $\tau\in \mathcal{C}^{\infty}(V)$ with values in  $(0,2\pi)$, such that
\begin{equation*}
\bdd=J_{\tau}^{\perp}\add,
\end{equation*}
where the rotation $J_{\tau}^{\perp}=\cos\tau I +\sin\tau \Jp$ satisfies $J_{\tau}^{\perp}=e^{\mp i\tau}I$ on $N^{\pm}_{f}M$.
Since $\Phi^{\pm}\neq 0$ on $\inter (M_0\smallsetminus Z^{\pm})$, the function $\tau$ is well-defined modulo $2\pi$
on $\inter (M_0\smallsetminus Z^{\pm})$. 
Moreover, it is non-vanishing modulo $2\pi$ on $\inter (M_0\smallsetminus Z^{\pm})$ and thus, 
there exists a branch in $\mathcal{C}^{\infty}(\inter (M_0\smallsetminus Z^{\pm}))$ with values in $(0,2\pi)$.
By setting $\th^{\pm}=\tau$, we have that \eqref{general1} holds on $\inter (M_0\smallsetminus Z^{\pm})$.
In particular, the assertion is obvious if $M=M_0$.

Assume that $M\neq M_0$ and let $p\in M\smallsetminus M_0$. According to Lemma \ref{rot}, there exist
smooth frame fields $\{e_1,e_2,e_3,e_4\}, \{\tilde{e}_1,\tilde{e}_2,\tilde{e}_3,\tilde{e}_4\}$
on a neighbourhood $U\subset M\smallsetminus M_0$ of $p$, such that 
$${\a}_{11}-{\a}_{22}=2\kappa e_3,\;\; {\a}_{12}=\mu e_4,\;\; \mbox{where}\;\; \a_{ij}=\a(e_i,e_j),\; j=1,2,$$
and 
$${\b}_{11}-{\b}_{22}=2\tilde{\kappa}\tilde{e}_3,\;\; {\b}_{12}=\tilde{\mu}\tilde{e}_4,\;\; \mbox{where}\;\; \b_{ij}=\b(\tilde{e}_i,\tilde{e}_j),\; j=1,2.$$
Lemma \ref{qke}(ii) yields that the ellipses ${\cal{E}}_{f}(q)$ and ${\cal{E}}_{\b}(q)$ are congruent at any point $q\in U$ 
and consequently, $\kappa=\tilde{\kappa}$. 
Using \eqref{nc} and \eqref{Ricci}, we obtain that $K_N=2\kappa \mu$ and ${\tilde K}_N=2\tilde{\kappa}\tilde{\mu}$.
Then, Lemma \ref{qke}(ii) implies that $\mu=\tilde{\mu}$.
Setting $\tilde{e}_3-i\tilde{e}_4=e^{i\th}(e_3-ie_4)$ for some $\th\in \mathcal{C}^{\infty}(U)$, we have that
$$J_{\th}^{\perp}\left(\a_{11}-\a_{22} \right)=\b_{11}-\b_{22}\;\; \mbox{and}\;\; J_{\th}^{\perp}\a_{12}=\b_{12}\;\; \mbox{on}\;\; U,$$
where $J_{\th}^{\perp}=\cos\th I +\sin\th \Jp$. This gives
\begin{equation*}
\b(\tilde{e}_1-i\tilde{e}_2,\tilde{e}_1-i\tilde{e}_2)=J_{\th}^{\perp}\left(\a(e_1-ie_2,e_1-ie_2) \right).
\end{equation*}
Setting $\tilde{e}_1-i\tilde{e}_2=e^{i\s}(e_1-ie_2)$ for some $\s\in \mathcal{C}^{\infty}(U)$,
the above is written equivalently as
\begin{equation*}
T^{-1}\circ \tilde{\Phi}=e^{i\th^{-}}\Phi^{-}+e^{-i\th^{+}}\Phi^{+},\;\; \mbox{where}\;\; \th^{\pm}=\th\pm2\s.
\end{equation*}
Since $\Phi^{-}$ and $\Phi^{+}$ are everywhere non-vanishing on $M\smallsetminus M_0$, 
the functions $\th^{-}$ and $\th^{+}$ are well-defined modulo $2\pi$ on $M\smallsetminus M_0$.
From the assumption $\tilde{f}\in \mathcal{N}^{\pm}(f)$, it follows that $\th^{\pm}$ is non-vanishing modulo $2\pi$
on $M\smallsetminus(M_0\cup Z^{\pm})$ and thus, there exists a branch in $\mathcal{C}^{\infty}(M\smallsetminus(M_0\cup Z^{\pm}))$
with values in $(0,2\pi)$. Obviously, (\ref{general1}) holds on $M\smallsetminus(M_0\cup Z^{\pm})$.

Lemma \ref{qke}(ii) implies that for a point $q\in M_0\smallsetminus (\inter (M_0)\cup Z^{\pm})$,
there exists a unique number $l(q)\in (0,2\pi)$ such that 
$$T^{-1}\circ \tilde{\Phi}(q)=\Jp_{l(q)}\Phi(q),$$
where the rotation is given by $\Jp_{l(q)}=e^{\mp il(q)}I$, since $q\in M^{\mp}_0$.
We extend $\th^{\pm}$ on $M\smallsetminus Z^{\pm}$
by setting $\th^{\pm}(q)=l(q)$.
Then, (\ref{general1}) holds on $M\smallsetminus Z^{\pm}$.
Since $Q^{\pm}$ and $\Phi^{\pm}$ are everywhere non-vanishing on 
$M\smallsetminus Z^{\pm}$, from (\ref{general1}) it follows that $\th^{\pm}$ is smooth.
\qed
\end{proof}
\medskip

Let $f\colon M\to \Q^4_c$ be a surface with Hopf differential $\Phi$.
In terms of a complex chart $(U,z)$, $\Phi^{\pm}$ is written as
\be \label{sf}
\Phi^{\pm}=\phi^{\pm}dz^2.
\ee
Moreover, there exist smooth complex functions $h^{+}$ and $h^-$ such that
\begin{eqnarray} \label{defh}
\nap_{\dzb}\phi^{\pm}&=&h^{\pm}\phi^{\pm}\;\;\;\; \mbox{on}\;\;\;\; U\smallsetminus M_0^{\pm}.
\end{eqnarray}
The following lemma is essential for the proof of Theorem \ref{basic}.

\begin{lemma}
Let $\tilde{f}\in \mathcal{N}^{\pm}(f)$. In terms of a complex chart $(U,z)$,
the function $\th^{\pm}$ in Proposition \ref{angles} satisfies on $U\smallsetminus Z^{\pm}$ the equations
\begin{eqnarray}
&A^{\pm}e^{\pm2i\th^{\pm}}-2i(\Imag A^{\pm})e^{\pm i\th^{\pm}}-{\overline{A^{\pm}}}=0,&\label{B}\\
&\th^{\pm}_{z \overline{z}}= \mp A^{\pm}(1-e^{\pm i\th^{\pm}}),&\label{lm}
\end{eqnarray}
where $A^{\pm}=i\left(h^{\pm}_z-|h^{\pm}|^2\right)$.
\end{lemma}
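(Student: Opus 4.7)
The plan is to extract both equations by writing down the holomorphicity of $Q^{\pm}$ in local coordinates and then imposing reality of $\theta^{\pm}$. By Lemma \ref{isolated} we have $M_{0}^{\pm}\subset Z^{\pm}$, and by Lemma \ref{pseudo}(i) the zero set of $\phi^{\pm}$ is exactly $M_{0}^{\pm}$; hence on $U\setminus Z^{\pm}$ the function $\phi^{\pm}$ is nowhere vanishing and the coefficient $h^{\pm}$ of \eqref{defh} is a well-defined smooth function. All manipulations below take place on this set.

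Using the representation $Q^{\pm}=(1-e^{\mp i\theta^{\pm}})\Phi^{\pm}$ from Proposition \ref{angles}, I would write $Q^{\pm}=q^{\pm}dz^{2}$ with $q^{\pm}=(1-e^{\mp i\theta^{\pm}})\phi^{\pm}$, so that holomorphicity of $Q^{\pm}$ becomes $\nap_{\bar z}q^{\pm}=0$. Expanding this equation by Leibniz, substituting \eqref{defh} for $\nap_{\bar z}\phi^{\pm}$, and dividing by $\phi^{\pm}$ would produce the first-order PDE
\[
\theta^{\pm}_{\bar z}\;=\;\mp\,i\,(1-e^{\pm i\theta^{\pm}})\,h^{\pm},
\]
whose complex conjugate, using the reality of $\theta^{\pm}$, reads $\theta^{\pm}_{z}=\pm i(1-e^{\mp i\theta^{\pm}})\overline{h^{\pm}}$.

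For \eqref{lm} I would differentiate this first-order equation once more in $z$ and substitute the conjugate formula for $\theta^{\pm}_{z}$ into the term $\theta^{\pm}_{z}\,e^{\pm i\theta^{\pm}}$ that appears. The $\overline{h^{\pm}}\cdot h^{\pm}$ pieces combine into $|h^{\pm}|^{2}$, and the remaining terms group exactly into $\mp A^{\pm}(1-e^{\pm i\theta^{\pm}})$ with $A^{\pm}=i(h^{\pm}_{z}-|h^{\pm}|^{2})$. For \eqref{B} I would then exploit that the left-hand side of \eqref{lm} is real: equating the right-hand side with its conjugate yields $A^{\pm}(1-e^{\pm i\theta^{\pm}})=\overline{A^{\pm}}(1-e^{\mp i\theta^{\pm}})$, and multiplying through by $e^{\pm i\theta^{\pm}}$ and recognising $A^{\pm}-\overline{A^{\pm}}=2i\Imag A^{\pm}$ delivers \eqref{B} directly.

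The only delicate point is keeping the $\pm$ signs consistent across the two cases $Q^{+}$ and $Q^{-}$; no deeper obstacle arises, since once the first-order PDE for $\theta^{\pm}_{\bar z}$ is in hand, both \eqref{lm} and \eqref{B} are essentially immediate consequences of one further differentiation and of the reality of $\theta^{\pm}$.
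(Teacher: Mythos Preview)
Your proof is correct and follows essentially the same route as the paper: derive the first-order equation $\theta^{\pm}_{\bar z}=\mp i h^{\pm}(1-e^{\pm i\theta^{\pm}})$ from the holomorphicity of $Q^{\pm}$ together with \eqref{defh}, then differentiate once more to obtain \eqref{lm}, and finally extract \eqref{B} from the reality of $\theta^{\pm}_{z\bar z}$. The paper phrases this last step as equating the two mixed second derivatives $\theta^{\pm}_{\bar z z}$ and $\theta^{\pm}_{z\bar z}$ computed from the first-order equation and its conjugate, which amounts to the same thing since one expression is the complex conjugate of the other.
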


\begin{proof}
By differentiating (\ref{general1}) with respect to $\bar \d$ in the normal connection, and using (\ref{defh}) and the holomorphicity of $Q^{\pm}$, we obtain
\begin{eqnarray*}
\left(h^{\pm}(1-e^{\mp i\th^{\pm}})\pm ie^{\mp i\th^{\pm}}\th^{\pm}_{\bar z}\right)\phi^{\pm}=0.
\end{eqnarray*}
Since $\phi^{\pm}\neq0$ on $U\smallsetminus Z^{\pm}$, we have
$$\th^{\pm}_{\bar z}=\mp ih^{\pm}(1-e^{\pm i\th^{\pm}})\;\;\; \mbox{and}\;\;\; \th^{\pm}_{z}=\pm i\overline{h^{\pm}}(1-e^{\mp i\th^{\pm}}).$$
Differentiating the above, we find
\begin{equation*}
\th^{\pm}_{\overline{z}z}=\mp A^{\pm}(1-e^{\pm i\th^{\pm}})\;\;\; \mbox{and}\;\;\; 
\th^{\pm}_{z\overline{z}}=\mp \overline{A^{\pm}}(1-e^{\mp i\th^{\pm}}),
\end{equation*}
from which (\ref{B}) and \eqref{lm} follow immediately.
\qed
\end{proof}

\begin{lemma} \label{tri}
\begin{enumerate}[topsep=0pt,itemsep=-1pt,partopsep=1ex,parsep=0.5ex,leftmargin=*, label=(\roman*), align=left, labelsep=-0.4em]
\item If $f_1\in \mathcal{M}^{-}(f_3)$ and $f_2\in \mathcal{M}^{+}(f_3)$, then $f_1\in \mathcal{M}^{*}(f_2)$.
\item If $f_1, f_2\in \mathcal{M}^{\pm}(f_3)$, then $f_1\in \mathcal{M}^{\pm}(f_2)$.
\end{enumerate}
\end{lemma}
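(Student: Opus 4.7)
\textbf{The plan} is to exploit transitivity of the ``same mean curvature'' relation together with a cocycle-type identity for distortion differentials, and then read off both statements by inspecting the $(\pm)$-parts.

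Let $T_i : N_{f_3}M \to N_{f_i}M$, $i=1,2$, be the orientation-preserving, parallel vector bundle isometries with $T_i H_3 = H_i$ guaranteed by $f_i \in \mathcal{M}(f_3)$. Set $T := T_1 \circ T_2^{-1} : N_{f_2}M \to N_{f_1}M$; this is again such an isometry, so $f_1 \in \mathcal{M}(f_2)$. Since each $T_i$ commutes with $\Jp$ (being an orientation-preserving isometry of the normal bundle), each preserves the splitting $N\otimes\C = N^{-} \oplus N^{+}$, and so does $T$. By Lemma~\ref{qke}(i), the distortion differential of $(f_2,f_1)$ is intrinsic to the pair, so I may compute $Q_{f_2,f_1}$ using this specific $T$. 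Substituting $T_i^{-1}\circ\Phi_i^{\pm} = \Phi_3^{\pm} - Q^{\pm}_{f_3,f_i}$ into $Q^{\pm}_{f_2,f_1} = \Phi_2^{\pm} - T_2\circ T_1^{-1}\circ\Phi_1^{\pm}$ and pulling out $T_2$ yields the key identity
$$Q^{\pm}_{f_2,f_1} \;=\; T_2 \circ \bigl(Q^{\pm}_{f_3,f_1} - Q^{\pm}_{f_3,f_2}\bigr).$$

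With this identity both parts are immediate. For (i), $f_1 \in \mathcal{M}^{-}(f_3)$ gives $Q^{+}_{f_3,f_1}\equiv 0$ and $Q^{-}_{f_3,f_1}\not\equiv 0$, while $f_2 \in \mathcal{M}^{+}(f_3)$ gives $Q^{-}_{f_3,f_2}\equiv 0$ and $Q^{+}_{f_3,f_2}\not\equiv 0$; the identity then delivers $Q^{-}_{f_2,f_1} = T_2\circ Q^{-}_{f_3,f_1}\not\equiv 0$ and $Q^{+}_{f_2,f_1} = -T_2\circ Q^{+}_{f_3,f_2}\not\equiv 0$, so $f_1 \in \mathcal{M}^{*}(f_2)$. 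For (ii), if $f_1,f_2 \in \mathcal{M}^{\pm}(f_3)$ then $Q^{\mp}_{f_3,f_1}=Q^{\mp}_{f_3,f_2}=0$, hence $Q^{\mp}_{f_2,f_1}\equiv 0$; provided $f_1$ and $f_2$ represent distinct congruence classes (the only non-vacuous situation), $Q_{f_2,f_1}\not\equiv 0$ forces $Q^{\pm}_{f_2,f_1}\not\equiv 0$, whence $f_1 \in \mathcal{M}^{\pm}(f_2)$.

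The proof is essentially a bookkeeping calculation, so no step is a real obstacle. The only subtle point is the appeal to Lemma~\ref{qke}(i): without knowing that $Q_{f_2,f_1}$ is independent of the chosen parallel isometry, I could not identify it with the expression built from $T_1\circ T_2^{-1}$ and the cocycle identity would not even make sense.
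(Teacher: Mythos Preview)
Your proof is correct and follows essentially the same approach as the paper: both establish the cocycle identity $Q^{\pm}_{f_2,f_1}=T_2\circ(Q^{\pm}_{f_3,f_1}-Q^{\pm}_{f_3,f_2})$ (the paper writes the analogous formula for $Q_{12}$ via $T_{31}$) and then read off (i) and (ii) by inspecting which $\pm$-parts vanish. Your explicit remark that (ii) is only non-vacuous when $f_1$ and $f_2$ are noncongruent is a point the paper leaves implicit.
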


\begin{proof}
Let $T_{jk}\colon N_{f_j}M\to N_{f_k}M,\;1\leq j,k\leq3,\; j\neq k,$ be orientation and mean curvature vector field-preserving,
parallel vector bundle isometries.
Denote by $Q_{jk}$ and $\Phi_j$ the distortion differential of the pair $(f_j,f_k)$ 
and the Hopf differential of $f_j$, respectively. From Lemma \ref{qke}(i), we know that $Q_{jk}$ is independent of $T_{jk}$. Hence,
$$Q_{12}=\Phi_{1}-T^{-1}_{12}\circ \Phi_{2}=\Phi_{1}-(T_{31}\circ T^{-1}_{32})\circ \Phi_{2},$$
or equivalently,
$$T^{-1}_{31}\circ Q_{12}=T^{-1}_{31}\circ \Phi_{1}-T^{-1}_{32}\circ \Phi_{2}=Q_{31}-Q_{32}.$$
Therefore,
\be \label{T12}
Q^{\pm}_{12}=T_{31}\circ (Q^{\pm}_{31}-Q^{\pm}_{32})
\ee
and the results follow immediately.
\qed
\end{proof}
\medskip

\noindent{\emph{Proof of Theorem \ref{basic}:}}
We claim that if there exist $f_1,f_2\in \mathcal{N}^{\pm}(f)$ with $f_1\in \mathcal{N}^{\pm}(f_2)$,
then the Gauss lift $G_{\pm}$ of $f$ is vertically harmonic. To unify the notation, set $f_3=f$ and denote by $Q_{jk}$ 
the distortion differential of the pair $(f_j,f_k)$.
Let $\widehat{Z}$ be the set containing the zeros of the holomorphic differentials
$Q^{\pm}_{jk}$, $1\leq j,k\leq3,\;j\neq k$. 
According to Lemma \ref{zeros}, $\widehat{Z}$ consists of isolated points only.
Appealing to Proposition \ref{angles}, we know that 
there exists $\th^{\pm}_{j}\in \mathcal{C}^{\infty}( M\smallsetminus \widehat{Z})$, $j=1,2$, with values in $(0,2\pi)$ such that
\begin{eqnarray}
Q^{\pm}_{3j}= (1-e^{\mp i\th^{\pm}_{j}})\Phi^{\pm},\label{pr}
\end{eqnarray}
where $\Phi$ is the Hopf differential of $f$. Then we have
$$Q^{\pm}_{31}-Q^{\pm}_{32}= (e^{\mp i\th^{\pm}_{2}}-e^{\mp i\th^{\pm}_{1}})\Phi^{\pm}.$$
Since the zeros of $Q^{\pm}_{12}$ are contained in $\widehat Z$, the above and \eqref{T12} imply 
that $\th^{\pm}_{1}\neq \th^{\pm}_{2}$ at every point in $M\smallsetminus \widehat{Z}$. 
Then (\ref{B}), viewed as a polynomial equation has three distinct roots, namely $e^{\pm i\th^{\pm}_{1}}, e^{\pm i\th^{\pm}_{2}}, 1$.
Therefore, $A^{\pm}=0$. From (\ref{lm}) it follows that $\th^{\pm}_{1}$ is
harmonic on $M\smallsetminus \widehat{Z}$. Since it is bounded and $\widehat{Z}$ consists of isolated points, it can be extended 
to a bounded harmonic function on $M$, which has to be constant by the maximum principle. 
Then (\ref{pr}) shows that $\Phi^{\pm}$ is holomorphic. 
Proposition \ref{glphi} implies that the Gauss lift $G_{\pm}$ is vertically harmonic and this proves the claim.

(i) Suppose to the contrary that there exist noncongruent $f_1,f_2\in \mathcal{M}^{\pm}(f)\subset \mathcal{N}^{\pm}(f)$.
From Lemma \ref{tri}(ii), we have that $f_1\in \mathcal{M}^{\pm}(f_2)\subset \mathcal{N}^{\pm}(f_2)$.
Therefore, the Gauss lift $G_{\pm}$ is vertically harmonic, a contradiction. 

For the second assertion, assume that there exists $f_1\in \mathcal{M}^{\mp}(\tilde{f})$. 
If $f_1\in \mathcal{M}^{\mp}(f)$, then Lemma \ref{tri}(ii) implies that $f\in  \mathcal{M}^{\mp}(\tilde{f})$, which is a contradiction.
Therefore, $f_1\not\in \mathcal{M}^{\mp}(f)$ and thus,
$\{\tilde{f}\}\cup\mathcal{M}^{\mp}(\tilde{f})\subset \mathcal{N}^{\pm}(f)$, which obviously holds if 
$\mathcal{M}^{\mp}(\tilde{f})=\emptyset$.
The converse inclusion is obvious if $\mathcal{N}^{\pm}(f)=\{\tilde{f}\}$.
Assume that there exists $f_1\in  \mathcal{N}^{\pm}(f)\smallsetminus \{\tilde{f}\}$. From the claim proved above,
it follows that $f_1\in \mathcal{M}^{\mp}(\tilde{f})$ and thus, 
$\mathcal{N}^{\pm}(f)\subset\{\tilde{f}\}\cup\mathcal{M}^{\mp}(\tilde{f})$. 

(ii) Suppose to the contrary that there exist noncongruent $f_1,f_2\in \mathcal{M}^*(f)$. Since both $G_{+}$ and $G_{-}$ are not vertically harmonic,
from the above claim we obtain that $f_1\not \in \mathcal{N}^+(f_2)\cup \mathcal{N}^-(f_2)$, which is a contradiction since $(f_1,f_2)$ is a Bonnet pair.

If $M$ is homeomorphic to the sphere, then for any 
$\tilde{f} \in \mathcal{M}(f)\smallsetminus \{f\}$,
the fourth-order differential $\langle Q^-,Q^+\rangle$ is holomorphic with zero-set $Z^-\cup Z^+$, 
where $Q$ is the distortion differential of the pair $(f,\tilde{f})$.
From the Riemann-Roch theorem we have that $\langle Q^-,Q^+\rangle\equiv0$. 
Hence, either $Q^-\equiv0$ or $Q^+\equiv0$ and consequently $\mathcal{M}^{*}(f)=\emptyset$. 
\qed
\medskip

\noindent{\emph{Proof of Theorem \ref{main}:}}
Theorem \ref{basic} implies that $\mathcal{M}(f)\smallsetminus \{f\}$ contains at most three congruence classes.
Assume that $M$ is homeomorphic to $\mathbb{S}^2$.
Theorem \ref{basic} shows that $\mathcal{M}^*(f)=\emptyset$ and each one of $\mathcal{M}^+(f)$ and $\mathcal{M}^-(f)$
contains at most one congruence class. Suppose that there exist $f_1\in \mathcal{M}^+(f)$ and $f_2\in \mathcal{M}^-(f)$.
From Lemma \ref{tri}(i) it follows that $f_1\in \mathcal{M}^*(f_2)$, which contradicts Theorem \ref{basic}(ii).
Therefore, $\mathcal{M}(f)\smallsetminus \{f\}$ contains at most one congruence class.
\qed
\medskip

\noindent{\emph{Proof of Theorem \ref{thr4}:}}
The proof follows immediately from Remark \ref{Eucl} and Theorem \ref{main}. \qed

\begin{remark}{\em
In the proof of Theorem $\ref{basic}$, compactness is only required for the use of the maximum principle. This theorem
and also Theorems $\ref{thr4}$, $\ref{main}$ and the results of the next subsection, still hold true if $M$ is parabolic.
In particular, this includes the case where $M$ is complete with non-negative Gaussian curvature.}
\end{remark}

\subsection{Applications to certain classes of surfaces} \label{Applications}

The following result due to Lawson-Tribuzy \cite{LT}, is an easy application of Theorem \ref{basic}.

\begin{theorem}
Let $M$ be a compact oriented 2-dimensional Riemannian manifold and
$h\in \mathcal{C}^{\infty}(M)$. If $h$ is not constant, then there exist at most two congruence classes of
isometric immersions of $M$ into $\Q^3_c$ with mean curvature $h$. In particular, there exists at most one congruence class 
if $M$ is homeomorphic to $\mathbb{S}^2$.
\end{theorem}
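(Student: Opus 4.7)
The plan is to reduce the statement to Theorem \ref{basic} via the totally geodesic inclusion $\Q^3_c\hookrightarrow\Q^4_c$. If no isometric immersion of $M$ into $\Q^3_c$ with mean curvature $h$ exists there is nothing to prove, so I would fix one such $f$ and view it as an immersion $f\colon M\to\Q^4_c$. Its normal bundle then splits orthogonally as $N_fM=\mathbb{R}\xi\oplus\mathbb{R}\nu$, where $\xi$ is the unit normal of $f$ in $\Q^3_c$ and $\nu$ is a parallel unit normal of $\Q^3_c$ in $\Q^4_c$; both sections are parallel in $N_fM$ (since $\Q^3_c$ is totally geodesic), and $H=h\xi$. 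A direct computation then yields $\nap_XH=X(h)\,\xi$, so condition (iv) of Proposition \ref{glphi} would force $dh\equiv 0$. Since $h$ is non-constant by hypothesis, neither Gauss lift $G_\pm$ of $f$ is vertically harmonic and Theorem \ref{basic} applies.

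For any second isometric immersion $\tilde f\colon M\to\Q^3_c$ with mean curvature $h$, I would consider the bundle map $T\colon N_fM\to N_{\tilde f}M$ defined by $T\xi=\tilde\xi$ and $T\nu=\nu$; it is parallel, orientation-preserving, and satisfies $TH=\tilde H$, so $f$ and $\tilde f$ have the same mean curvature in the paper's sense. In a local complex chart, the Hopf differentials take the form $\Phi=\phi\,\xi\,dz^2$ and $T^{-1}\circ\tilde\Phi=\tilde\phi\,\xi\,dz^2$, so the distortion differential and its projections are
\begin{equation*}
Q=(\phi-\tilde\phi)\,\xi\,dz^2,\qquad Q^\pm=\tfrac{1}{2}(\phi-\tilde\phi)(\xi\pm i\nu)\,dz^2.
\end{equation*}
Hence $Q^+\equiv 0$ iff $Q^-\equiv 0$ iff $\phi\equiv\tilde\phi$, which by the fundamental theorem of surfaces in $\Q^3_c$ is exactly the condition that $\tilde f$ be $\Q^3_c$-congruent to $f$. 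Every non-congruent candidate therefore lies in $\mathcal{M}^*(f)$, and Theorem \ref{basic}(ii) bounds $\mathcal{M}^*(f)$ by at most one $\Q^4_c$-congruence class in general, and forces it to be empty when $M\simeq\mathbb{S}^2$.

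To close the argument I must upgrade $\Q^4_c$-congruence to $\Q^3_c$-congruence. Because $h$ is non-constant, no such immersion can be totally umbilic in $\Q^3_c$; in particular none factors through a totally geodesic $\Q^2_c\subset\Q^3_c$, so every candidate $\tilde f$ is substantial in $\Q^3_c$ and hence lies in a \emph{unique} totally geodesic 3-dimensional submanifold of $\Q^4_c$ (two distinct such $\Q^3$'s intersect in at most a totally geodesic $\Q^2$). Any $\Q^4_c$-isometry between two such substantial immersions must then send our fixed $\Q^3_c$ onto itself and therefore restricts to an isometry of $\Q^3_c$, delivering the desired $\Q^3_c$-congruence. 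The main technical point is the distortion differential computation, which traps every non-trivial candidate in the single critical component $\mathcal{M}^*(f)$; once this is in place, both conclusions follow immediately from Theorem \ref{basic}(ii).
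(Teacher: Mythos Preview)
Your proposal is correct and follows essentially the same approach as the paper: embed via the totally geodesic inclusion $\Q^3_c\hookrightarrow\Q^4_c$, observe that $h$ non-constant forces both Gauss lifts to be non-vertically-harmonic, show that any noncongruent competitor lands in $\mathcal{M}^*(f)$ because the distortion differential is a multiple of the real section $\xi$ (so $Q^+$ and $Q^-$ vanish simultaneously), and conclude via Theorem~\ref{basic}(ii). The paper phrases the key observation as $Z=Z^-=Z^+$ rather than writing out $Q^\pm$ explicitly, and it defines $T$ by $T(\Jp_1\xi_1)=\Jp_2\xi_2$ (which is automatically orientation-preserving) instead of your $T\nu=\nu$, but these are cosmetic differences. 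Your added paragraph upgrading $\Q^4_c$-congruence to $\Q^3_c$-congruence is a point the paper leaves implicit; it is a welcome clarification, though the route through ``not totally umbilic'' is slightly roundabout---it suffices to note that an immersion factoring through a totally geodesic $\Q^2_c$ would be totally geodesic, forcing $h\equiv 0$.
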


\begin{proof}
Suppose that there exist noncongruent isometric immersions $f_1, f_2\colon M\to \Q^3_c$ with mean curvature function $h$, and let 
$\xi_1, \xi_2$ be their unit normal vector fields. 
Consider a totally geodesic inclusion $j\colon \Q^3_c\to \Q^4_c$. The isometric immersion $\hat{f}_k=j\circ f_k\colon M\to \Q^4_c$,
$k=1,2,$ has non-parallel mean curvature vector field $hj_*\xi_k$. Proposition \ref{glphi} implies
that it has non vertically harmonic Gauss lifts.
The parallel vector bundle isometry
$T\colon N_{\hat{f}_1}M\to N_{\hat{f}_2}M$ given by 
$Tj_*\xi_1=j_*\xi_2$, $T(\Jp_{1}j_*\xi_1)=\Jp_{2}j_*\xi_2$, preserves the mean curvature vector fields,
where $\Jp_k$ is the complex structure of the normal bundle of $\hat{f}_k,\; k=1,2$.
Since the image of the second fundamental form of ${\hat{f}_k}$ is contained in the line bundle spanned by $j_{*}\xi_k, k=1,2$,
it follows that $Z=Z^-=Z^+$. Hence, $\hat{f}_2\in \mathcal{M}^*(\hat{f}_1)$
and the proof follows from Theorem \ref{basic}(ii).
\qed
\end{proof}

\begin{theorem} \label{superconformal}
Let $f\colon M\to \Q^4_c$ be a superconformal surface. If $M$ is compact and oriented,
then there exists at most one nontrivial congruence class of isometric immersions of $M$ into $\Q^4_c$,
with the same mean curvature with $f$.
\end{theorem}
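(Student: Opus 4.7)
My plan is to exploit the decomposition $\mathcal{M}(f) = \mathcal{M}^*(f) \cup \mathcal{M}^+(f) \cup \mathcal{M}^-(f) \cup \{f\}$ from the previous subsection, together with the fact that every point of a superconformal surface is pseudo-umbilic, so $M = M_0^+ \cup M_0^-$. In turn I would establish that (a) $\mathcal{M}^*(f) = \emptyset$, (b) at most one of $\mathcal{M}^+(f)$, $\mathcal{M}^-(f)$ is nonempty, and (c) the surviving component contains at most one congruence class.

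For (a), if $\tilde f \in \mathcal{M}^*(f)$ then both $Q^\pm_{f,\tilde f}$ are nontrivial holomorphic quadratic differentials, so by Lemma \ref{zeros} their zero sets $Z^\pm$ are isolated; Lemma \ref{isolated} gives $M_0^\pm \subset Z^\pm$, so $M = M_0^+ \cup M_0^-$ would be a countable union of isolated points, contradicting that $M$ is a compact 2-manifold. For (b), if $\tilde f_1 \in \mathcal{M}^+(f)$ and $\tilde f_2 \in \mathcal{M}^-(f)$, then Lemma \ref{tri}(i) gives $\tilde f_1 \in \mathcal{M}^*(\tilde f_2)$; since the curvature ellipses are congruent by Lemma \ref{qke}(ii), $\tilde f_2$ is itself superconformal, so step (a) applied to $\tilde f_2$ yields a contradiction.

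For (c), after possibly interchanging the roles of $+$ and $-$, assume $\mathcal{M}^-(f) = \emptyset$ and let $\tilde f \in \mathcal{M}^+(f)$. If the Gauss lift $G_+$ is not vertically harmonic, Theorem \ref{basic}(i) directly yields $|\mathcal{M}^+(f)| \leq 1$. If instead $G_+$ is vertically harmonic, then Proposition \ref{HOLGL} (applicable since $f$ is non-minimal and superconformal) forces $\Phi^+ \equiv 0$, and hence $K_N \geq 0$ on $M$ by Lemma \ref{pseudo}(ii). Because the Bonnet partner $\tilde f$ is superconformal with the same $K_N$ by Lemma \ref{qke}(ii), Lemma \ref{pseudo}(ii) also forces $\tilde\Phi^+ \equiv 0$, so $Q^+_{f,\tilde f} = \Phi^+ - T^{-1}\tilde\Phi^+ \equiv 0$, contradicting $\tilde f \in \mathcal{M}^+(f)$; thus this subcase is vacuous. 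I expect the vertically harmonic subcase to be the main subtlety, since there Theorem \ref{basic}(i) is unavailable; the resolution rests on Proposition \ref{HOLGL} and the transfer of the vanishing of $\Phi^+$ from $f$ to any would-be partner through the shared value of the normal curvature.
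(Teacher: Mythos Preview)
Your argument is essentially correct for non-minimal $f$, and it reaches the conclusion by a route that differs from the paper's. The paper first observes that the existence of any Bonnet partner forces $K_N$ to have a fixed sign on $M$ (via Lemmas \ref{pseudo}(ii) and \ref{isolated}), hence $\Phi^{\pm}\equiv0$ for the appropriate sign, so every partner lies in $\mathcal{M}^{\mp}(f)$; it then shows directly that $G_{\mp}$ cannot be vertically harmonic (otherwise both lifts would be, $H$ would be parallel, $K_N\equiv0$, and $f$ would be totally umbilical, contradicting Lemma \ref{isolated}), so Theorem \ref{basic}(i) applies. Your approach instead uses the decomposition of $\mathcal{M}(f)$ abstractly: step (a) via the covering $M=M_0^+\cup M_0^-$, step (b) via Lemma \ref{tri}(i), and step (c) by a case split, invoking Proposition \ref{HOLGL} in the vertically harmonic subcase. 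Both routes ultimately rest on Theorem \ref{basic}(i); yours trades the paper's direct sign-of-$K_N$ argument for the stronger Proposition \ref{HOLGL}, which is a legitimate alternative.

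There is, however, a genuine gap: you never treat the case where $f$ is minimal. The entire machinery you invoke---the distortion differential, Lemma \ref{qke}(i), the decomposition $\mathcal{M}(f)=\mathcal{M}^*(f)\cup\mathcal{M}^+(f)\cup\mathcal{M}^-(f)\cup\{f\}$, Lemma \ref{tri}, Theorem \ref{basic}, and Proposition \ref{HOLGL}---is developed under the standing hypothesis that the surfaces are non-minimal (see the opening of Section \ref{s3}); indeed the proof of Lemma \ref{qke}(i) uses that $\{H\neq0\}$ is open and nonempty. Your parenthetical ``applicable since $f$ is non-minimal'' in step (c) simply assumes what must be justified. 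The theorem as stated covers minimal superconformal (i.e.\ superminimal) surfaces as well, and for these the paper appeals to the results of \cite{Joh} or \cite{Vl1}. You should add a sentence at the outset disposing of the minimal case by citation, and then proceed with your argument under the explicit hypothesis that $f$ is non-minimal.
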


\begin{proof}
Assume that $f$ is non-minimal and let $(f,\tilde{f})$ be a Bonnet pair. From Lemmas \ref{pseudo}(ii) and 
\ref{isolated} it follows that the normal curvature 
is everywhere non-vanishing on $M\smallsetminus Z$. Therefore, $\pm K_N\geq0$ on $M$.
Lemma \ref{pseudo}(ii) implies that $\Phi^{\pm}\equiv0$ and thus, $\tilde{f}\in \mathcal{M}^{\mp}(f)$.
 
We claim that $G_{\mp}$ is not vertically harmonic. Arguing indirectly, assume that $G_{\mp}$ is vertically harmonic.
Since $\Phi^{\pm}\equiv0$, Proposition \ref{glphi} yields that $G_{\pm}$ is vertically harmonic.
Then the mean curvature vector field of $f$ is parallel.
Therefore, $K_N=0$ on $M$ and Lemma \ref{pseudo}(ii) implies that $f$ is totally umbilical.
This contradicts Lemma \ref{isolated}, and the proof of the claim follows.
Hence, from Theorem \ref{basic}(i) we obtain that $\mathcal{M}^{\mp}(f)=\{\tilde{f}\}$
and consequently, $\mathcal{M}(f)=\{f,\tilde{f}\}$. 
In the case where $f$ is minimal, the result follows from \cite{Joh} or \cite{Vl1}.
\qed
\end{proof}
\medskip

We give an application to Lagrangian surfaces in $\R^4$.
Let $\widetilde J$ be a canonical complex structure on $\R^4$ which is compatible with the orientation, i.e., 
for orthonormal vectors $e_1,e_2\in \R^4$, the oriented orthonormal basis $\{e_1,e_2,\widetilde{J}e_1,\widetilde{J}e_2\}$ is 
in the orientation of $\R^4$. 
Denote by $\Omega(\cdot,\cdot)=\<\cdot,\tilde{J}\cdot\>$ the associated K\"{a}hler form.
A surface $f\colon M\to \R^4$ is called Lagrangian if $f^*\Omega=0$. 
In such a case, from
$(\widetilde{J}\circ f_*)\circ \n= \nap \circ (\widetilde{J}\circ f_*)$
we have that
$\widehat{J}_f=\widetilde{J}\circ f_*\colon TM\to N_fM$ is a parallel vector bundle isometry
and the second fundamental form of $f$ satisfies
$\a(X,Y)=\widehat{J}_fA_{\widehat{J}_fX}Y$, $X,Y\in TM$.
Thus, the trilinear map $C_f$ on $TM$ given by
$$C_f(X,Y,Z)=\Omega(\a(X,Y),f_*Z)$$
is symmetric.
Associated to $f$ are its {\emph{mean curvature form}} $\Upsilon_f$ and the cubic differential $\Theta_f$, given by
\begin{eqnarray*}
\Upsilon_f=\Omega (H,f_* \dz)dz, \;\; \Theta_f=\Omega(\add,f_*\dz)dz^3,
\end{eqnarray*}
in terms of a local complex coordinate $z$, where $\Omega$ and $\widehat{J}_f$ have been extended $\mathbb{C}$-linearly.
Since $\widetilde J$ is compatible with the orientation, $\widehat{J}_{f}\colon TM\otimes{\mathbb{C}}\to N_{f}M\otimes{\mathbb{C}}$
satisfies 
\begin{equation*}
\widehat{J}_{f}T^{(1,0)}M=N^{-}_{f}M\;\; \mbox{and}\;\; \widehat{J}_{f}T^{(0,1)}M=N^{+}_{f}M.
\end{equation*}
The {\emph {Maslov form}} $\varpi_f$ of $f$, is the $1$-form on $M$ defined by
$\varpi_f (X)=(1/\pi)\Omega(f_*X,H)$.
The Gauss map of a Lagrangian surface is
$g=(g_{+},g_{-})\colon M \to \mathbb{S}^2_{+} \times \mathbb{S}^1_{-},$
i.e., its second component lies in a great circle of $\mathbb{S}^2_{-}$.
Lagrangian surfaces with conformal (respectively, harmonic) Maslov form 
provide examples of surfaces in $\R^4$ with harmonic $g_{+}$ (respectively, $g_{-}$).
Indeed, the following was proved in \cite{CU}.
\begin{proposition}\label{Maslov}
Let $f\colon M\to (\R^4,\widetilde{J})$ be a Lagrangian surface. The following are equivalent:
\begin{enumerate}[topsep=0pt,itemsep=-1pt,partopsep=1ex,parsep=0.5ex,leftmargin=*, label=(\roman*), align=left, labelsep=0em]
\item The Maslov form $\varpi_f$ is conformal (respectively, harmonic).
\item The differential $\Theta_f$ (respectively, $\Upsilon_f$) is holomorphic.
\item The component $g_{+}$ (respectively, $g_{-}$) is harmonic.
\end{enumerate}
\end{proposition}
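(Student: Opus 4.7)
The plan is to reduce each triple of equivalences to a condition on the $N_f^{\pm}M$-components of the mean curvature vector field, which Proposition \ref{glphi} combined with Remark \ref{Eucl} will then identify with harmonicity of $g_{\pm}$. Fix a local complex coordinate $z$ with $ds^2=\lambda^2|dz|^2$ and set $\upsilon=\Omega(H,f_*\partial_z)$, so that $\Upsilon_f=\upsilon\,dz$. Two features of the Lagrangian setup will be used throughout: $\widehat{J}_f\partial_z\in N_f^{-}M$, which comes from the compatibility of $\widetilde{J}$ with the orientation together with $\widehat{J}_f T^{(1,0)}M=N_f^-M$, and $\Omega|_{N_fM\times N_fM}\equiv 0$, which follows because $\widetilde{J}$ interchanges $f_*TM$ and $N_fM$.

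First, I translate the conditions on $\varpi_f$ into analytic conditions on $\upsilon$. Writing $\varpi_f=a\,dz+\bar{a}\,d\bar{z}$ with $a=\varpi_f(\partial_z)=-\upsilon/\pi$, a real $1$-form on a Riemann surface is harmonic iff its $(1,0)$-component $a\,dz$ is holomorphic, i.e., iff $\Upsilon_f$ is holomorphic; and $\varpi_f$ is conformal (its dual vector field being a conformal vector field) iff the symmetric trace-free part of $\nabla\varpi_f$ vanishes, which in isothermal coordinates reduces to $(\nabla\varpi_f)_{zz}=\lambda^2\partial_z(a/\lambda^2)=0$, equivalently $\partial_z(\upsilon/\lambda^2)=0$.

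Second, I carry out the two key computations. Using the Gauss formula, the vanishing $\nabla_{\bar z}\partial_z=0$ in the isothermal chart, $\alpha(\partial_z,\partial_{\bar z})=(\lambda^2/2)H$, and $\Omega|_{N_fM\times N_fM}=0$, I obtain
\begin{equation*}
\partial_{\bar z}\upsilon=\Omega(\nabla^{\perp}_{\bar z}H,f_*\partial_z),\qquad \partial_z(\upsilon/\lambda^2)=\Omega(\nabla^{\perp}_z H,f_*\partial_z)/\lambda^2.
\end{equation*}
A parallel calculation with $\alpha(\partial_z,\partial_z)$ in place of $H$, using the Codazzi identity $\nabla^{\perp}_{\bar z}\alpha(\partial_z,\partial_z)=(\lambda^2/2)\nabla^{\perp}_z H$ from \eqref{CC}, yields $\partial_{\bar z}\bigl(\Omega(\alpha(\partial_z,\partial_z),f_*\partial_z)\bigr)=(\lambda^2/2)\Omega(\nabla^{\perp}_z H,f_*\partial_z)$. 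For any $\xi\in N_fM\otimes\mathbb{C}$, one has $\Omega(\xi,f_*\partial_z)=\langle \xi,\widehat{J}_f\partial_z\rangle=\langle \xi^{+},\widehat{J}_f\partial_z\rangle$ by isotropy of $N_f^{-}M$, and this pairing is non-degenerate on $N_f^{+}M\times N_f^{-}M$ since $\widehat{J}_f\partial_z$ is nowhere zero. Therefore $\Upsilon_f$ is holomorphic iff $\nabla^{\perp}_{\bar z}H^{+}=0$ (i.e., $H^{+}$ is holomorphic), while both the holomorphicity of $\Theta_f$ and the conformality of $\varpi_f$ are equivalent to $\nabla^{\perp}_z H^{+}=0$ (i.e., $H^{+}$ is anti-holomorphic).

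Finally, Proposition \ref{glphi}(iii) identifies $H^{+}$ anti-holomorphic with vertical harmonicity of $G_{+}$, and $H^{+}$ holomorphic, equivalently $H^{-}=\overline{H^{+}}$ anti-holomorphic, with vertical harmonicity of $G_{-}$; Remark \ref{Eucl} then turns vertical harmonicity of $G_{\pm}$ into harmonicity of $g_{\pm}$, completing both triples of equivalences. The main difficulty is organizational rather than analytic: one must carefully track the sign conventions that link the eigenspaces $N_f^{\pm}M$, the Wirtinger operators $\partial_z,\partial_{\bar z}$, and the $\pm$ signs in Proposition \ref{glphi}, not any genuinely hard technical step.
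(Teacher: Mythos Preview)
Your argument is correct. Note, however, that the paper does not actually prove this proposition: it is quoted verbatim from \cite{CU}, so there is no ``paper's own proof'' to compare against. What you have written is a self-contained derivation using the machinery already set up in the paper---the Codazzi identity~\eqref{CC}, the isotropy of $N_f^{-}M$ together with $\widehat J_f\partial_z\in N_f^{-}M$, Proposition~\ref{glphi}(iii), and Remark~\ref{Eucl}. The reduction is clean: since $\widehat J_f\partial_z$ is nowhere vanishing and the pairing $N_f^{+}M\times N_f^{-}M\to\mathbb C$ is nondegenerate, the vanishing of $\Omega(\nabla^\perp_{\bullet}H,f_*\partial_z)$ is equivalent to $(\nabla^\perp_\bullet H)^{+}=\nabla^\perp_\bullet H^{+}=0$, and via $H^{-}=\overline{H^{+}}$ this matches the two cases of Proposition~\ref{glphi}(iii) exactly as required. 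Your proof therefore adds value relative to the paper, which simply defers to the literature.
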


Using Theorem \ref{basic}, we are able to give a short proof of the following result due to He, Ma and Wang \cite{HMW}.

\begin{theorem} \label{Lagrangian}
Let $f\colon M\to (\R^4,\widetilde{J})$ be a compact, oriented Lagrangian surface
with mean curvature form $\Upsilon$.
If its Maslov form is not conformal, then there exists at most one nontrivial congruence class of 
Lagrangian isometric immersions of $M$ into $(\R^4,\widetilde{J})$, with mean curvature form $\Upsilon$.
\end{theorem}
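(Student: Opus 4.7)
The plan is to reduce Theorem~\ref{Lagrangian} to Theorem~\ref{basic}(i). Given two noncongruent Lagrangian surfaces $f,\tilde{f}\colon M\to(\R^4,\widetilde{J})$ with $\Upsilon_f=\Upsilon_{\tilde f}=\Upsilon$, my aim is to exhibit $(f,\tilde f)$ as a Bonnet pair in the paper's sense whose distortion differential $Q=Q_{f,\tilde f}$ has vanishing $(-)$-part, and then invoke Theorem~\ref{basic}(i) through the hypothesis on the Maslov form.

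First I would construct the connecting parallel isometry as $T=\widehat{J}_{\tilde f}\circ \widehat{J}_f^{-1}\colon N_fM\to N_{\tilde f}M$. Since $\widehat{J}_f$ and $\widehat{J}_{\tilde f}$ are parallel vector bundle isometries, so is $T$; the compatibility of $\widetilde{J}$ with the orientation of $\R^4$ ensures that each $\widehat{J}_{(\cdot)}$ sends a positively oriented frame of $TM$ to a positively oriented frame of the normal bundle, making $T$ orientation-preserving. To verify $TH=\tilde H$, I would note that in a local complex coordinate $z$,
\begin{equation*}
\Upsilon_f(\d_z)=\Omega(H,f_{*}\d_z)=\<H,\widehat{J}_f\d_z\>,
\end{equation*}
and since $\widehat{J}_f\d_z\in N^-_fM$ while $N^-_fM$ is isotropic for $\<\cdot,\cdot\>$, this pairing reads only the $N^+_fM$-component $H^+$ of $H$. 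Writing $H^+=h\,\widehat{J}_f\d_{\bar z}$ gives $h=2\Upsilon(\d_z)/\lambda^2$, the same scalar for $\tilde f$; this produces $TH^+=\tilde H^+$ and, by conjugation, $TH^-=\tilde H^-$, so $TH=\tilde H$ and $(f,\tilde f)$ has the same mean curvature in the paper's sense.

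Next I would prove $Q^-\equiv 0$ using the Lagrangian symmetry: $C_f(X,Y,Z)=\Omega(\alpha(X,Y),f_{*}Z)$ is totally symmetric. In complex coordinates this yields
\begin{equation*}
\<\alpha(\d_z,\d_z),\widehat{J}_f\d_{\bar z}\>=\Omega(\alpha(\d_z,\d_z),f_{*}\d_{\bar z})=\Omega(\alpha(\d_z,\d_{\bar z}),f_{*}\d_z)=\tfrac{\lambda^2}{2}\Upsilon_f(\d_z).
\end{equation*}
Since $\widehat{J}_f\d_{\bar z}\in N^+_fM$ and $N^+_fM$ is isotropic for $\<\cdot,\cdot\>$, this pairing isolates the $N^-_fM$-component of $\alpha(\d_z,\d_z)$; hence $\Phi^-$ is determined entirely by the metric, by $\Upsilon_f$, and by $\widehat{J}_f$. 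Carrying out the same computation for $\tilde f$ and pulling back through $T$ yields $T^{-1}\circ\tilde\Phi^-=\Phi^-$, so $Q^-\equiv 0$ and $Q\equiv Q^+$, i.e., $\tilde f\in \mathcal{M}^+(f)$.

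Finally, by Proposition~\ref{Maslov} the assumption that the Maslov form of $f$ is not conformal is equivalent to $g_+$ failing to be harmonic, hence by Remark~\ref{Eucl} to $G_+$ failing to be vertically harmonic. Theorem~\ref{basic}(i) then gives $|\mathcal{M}^+(f)|\leq 1$, completing the proof. The main obstacle is the sign bookkeeping: making sure that the identification $\widehat{J}_fT^{(1,0)}M=N^-_fM$ really forces the Lagrangian symmetry to annihilate the $(-)$-part $Q^-$, which is complementary to the component controlled by $\Theta_f$ (and hence, via Proposition~\ref{Maslov}, by $g_+$). Everything else is straightforward $\C$-linear algebra on the two-dimensional complexified normal bundle.
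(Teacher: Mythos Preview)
Your proposal is correct and follows essentially the same route as the paper's proof: construct $T=\widehat{J}_{\tilde f}\circ\widehat{J}_f^{-1}$, use the total symmetry of $C_f$ together with $\Upsilon_f=\Upsilon_{\tilde f}$ to obtain $\langle \phi_f^{-}-T^{-1}\circ\phi_{\tilde f}^{-},\,\widehat{J}_f\partial_{\bar z}\rangle=0$, conclude $Q^{-}\equiv 0$ from the nondegeneracy of the pairing between $N_f^{-}M$ and $N_f^{+}M$, and then invoke Theorem~\ref{basic}(i) via Proposition~\ref{Maslov} and Remark~\ref{Eucl}. The paper compresses your middle step into the single line ``$C_f(\partial,\partial,\bar\partial)=C_{\tilde f}(\partial,\partial,\bar\partial)$'' and does not spell out the verification that $TH=\tilde H$, but the underlying argument is identical.
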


\begin{proof}
Suppose that $f,\tilde{f}\colon M\to (\R^4,\widetilde{J})$ are noncongruent Lagrangian surfaces
with mean curvature forms $\Upsilon=\tilde{\Upsilon}$.
It follows that
$T=\widehat{J}_{\tilde{f}}\circ\widehat{J}^{-1}_{f}\colon N_{f}M\to N_{\tilde{f}}M$ is an orientation 
and mean curvature vector field-preserving, parallel vector bundle isometry.
Let $(U,z)$ be a complex chart.
From our assumption, we have that $C_{f}(\dz,\dz,\dzb)=C_{\tilde{f}}(\dz,\dz,\dzb)$.
Hence,
$$\langle \phi^-_{f}-T^{-1}\circ \phi^-_{\tilde f},\widehat{J}_{f}\dzb\rangle \equiv 0\;\; \mbox{on}\;\; U,$$
where $\phi^-_{f}$ and $\phi^-_{\tilde f}$ are given by (\ref{sf}).
Since $\phi^-_{f}-T^{-1}\circ \phi^-_{\tilde f}\in N^-_{f}U$ and $\widehat{J}_{f}\dzb\in N^+_{f}U$, it follows that 
$\phi^-_{f}-T^{-1}\circ \phi^-_{\tilde f}\equiv0$ on $U$.
Therefore, $\tilde{f}\in \mathcal{M}^+(f)$ and the proof follows from Theorem \ref{basic}(i) and Proposition \ref{Maslov}.
\qed
\end{proof}
\medskip

In \cite{CU} it was proved that if $f\colon M\to \R^4$ is a Lagrangian isometric immersion of a compact, oriented Riemannian manifold
with conformal (respectively, harmonic) Maslov form, then ${\rm genus}(M)\leq 1$ (respectively, ${\rm genus}(M)\geq 1$).
The classification of compact, oriented Lagrangian surfaces
in $\R^4$ with conformal Maslov form, was given in \cite{CU}. It turns out that there exist
Lagrangian tori in $\R^4$ with non-parallel mean curvature vector field and conformal Maslov form. 
Lagrangian surfaces with harmonic Maslov form are Hamiltonian minimal. Examples of Hamiltonian minimal Lagrangian tori
in $\R^4$, with non-parallel mean curvature vector field, were constructed in \cite{CU1} and the complete
classification was given in \cite{HR}. Furthermore, it was proved in \cite{C} that
the only compact, orientable superconformal Lagrangian surface in $\R^4$ is the Whitney sphere.
Therefore, there exist compact, oriented non-superconformal surfaces in $\R^4$, whose only one of the
components $g_{+}$, $g_{-}$ of their Gauss map is harmonic.

\section{Surfaces with a vertically harmonic Gauss lift}\label{s4}

We need some facts about absolute value type functions (cf. \cite{EGT} or \cite{ET2}).
A smooth complex function $t$ on $M$ is called of {\emph{holomorphic type}} if locally it is expressed as
$t=t_0t_1$, where $t_0$ is holomorphic and $t_1$ is smooth without zeros. A non-negative function $u$ on $M$ is called
of {\emph{absolute value type}}, if there exists a function $t$ on $M$ of holomorphic type such that $u=|t|$.
If an absolute value type function $u$ does not vanish identically, then its zeros are isolated and they have well-defined multiplicities.
Furthermore, the Laplacian $\Delta\log u$ is still defined and smooth at the zeros.
If $M$ is compact and $u$ is an absolute value type function on $M$, then
$$\int_{M}\Delta\log u=-2\pi N(u),$$
where $N(u)$ is the number of zeros of $u$, counted with multiplicities.

In the next proposition, we show that surfaces with a vertically harmonic Gauss lift satisfy Ricci-like conditions
that extend the well-known Ricci condition (cf. \cite{L}) for CMC surfaces in 3-dimensional space forms.

\begin{proposition}\label{prop}

Let $f\colon M\to \Q^4_c$ be a non-minimal surface with mean curvature vector field $H$ and vertically harmonic Gauss lift $G_{\pm}$. Then:
\begin{enumerate}[topsep=0pt,itemsep=-1pt,partopsep=1ex,parsep=0.5ex,leftmargin=*, label=(\roman*), align=left, labelsep=0em]
\item The quadratic differential $\Psi^{\pm}=\langle \Phi^{\pm}, H^{\mp}\rangle$ is holomorphic with zero-set $Z(\Psi^{\pm})=M^{\pm}_0(f)\cup \{ p\in M: H(p)=0\}.$
\item The functions $\|H\|^2$ and $\|H\|^2-(K-c) \mp K_N$ are of absolute value type with even multiplicities.
\item We have that
\begin{eqnarray}
&\Delta \log\|H\|^2=\mp 2K_N, & \label{R3}\\
&\Delta\log\big(\|H\|^2-(K-c) \mp K_N\big)=2(2K \pm K_N)\;\;\; \mbox{if}\;\;\; \Psi^{\pm}\not \equiv 0.& \label{R4}
\end{eqnarray}
\end{enumerate}
\end{proposition}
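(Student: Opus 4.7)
The plan is to invoke Proposition~\ref{glphi}, which equates vertical harmonicity of $G_{\pm}$ with the simultaneous holomorphicity of the quadratic differential $\Phi^{\pm}$ and the section $H^{\mp}$ of the complexified normal bundle $N_fM\otimes\C$. Once these two holomorphicities are in hand, the three parts become consequences of the structure of the Hermitian holomorphic line bundles $N^{\pm}_fM$ and $N^{\mp}_fM$.

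For part~(i), since the projections $\pi^{\pm}$ are parallel (as $\Jp$ is) and the connection is metric, the Leibniz rule gives
\[
\nap_{\dzb}\langle \phi^{\pm},H^{\mp}\rangle=\langle \nap_{\dzb}\phi^{\pm},H^{\mp}\rangle+\langle \phi^{\pm},\nap_{\dzb}H^{\mp}\rangle=0,
\]
so $\Psi^{\pm}$ is holomorphic. For its zero-set, the $\C$-bilinear pairing between $N^{\pm}_fM$ and its conjugate $N^{\mp}_fM$ is nondegenerate (as noted just before Lemma~\ref{pseudo}), so $\Psi^{\pm}(p)=0$ forces either $\phi^{\pm}(p)=0$, which means $p\in M_0^{\pm}(f)$ by Lemma~\ref{pseudo}(i), or $H^{\mp}(p)=0$, which is equivalent to $H(p)=0$ since $H^{+}=\overline{H^{-}}$.

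For part~(ii), the identity $\|H\|^2=2\langle H^{-},H^{+}\rangle=2|H^{\mp}|_h^2$ realizes $\|H\|^2$ as twice the squared Hermitian norm of the holomorphic section $H^{\mp}$ of $N^{\mp}_fM$. In a local holomorphic frame $\varepsilon$, one has $H^{\mp}=h_0\varepsilon$ with $h_0$ holomorphic, hence $\|H\|^2=|h_0^2|\cdot 2|\varepsilon|_h^2$ is of absolute value type with multiplicity equal to twice the order of vanishing of $h_0$, in particular even. For the second function, decomposing $\add=\tfrac{\lambda^2}{2}(u-iv)$ with $u=\tfrac12(\a_{11}-\a_{22}),\ v=\a_{12}$ and combining $\|u\|^2+\|v\|^2=\|H\|^2-(K-c)$ with the Ricci identity $K_N=-2\langle u,\Jp v\rangle$, a direct computation yields
\[
|\phi^{\pm}|_h^2=\tfrac{\lambda^4}{8}\bigl(\|H\|^2-(K-c)\mp K_N\bigr).
\]
The same holomorphic-frame argument applied to $\phi^{\pm}$ then establishes the absolute value type property with even multiplicities for this function as well.

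Part~(iii) is then a Chern curvature calculation. For any nonvanishing local holomorphic section $s$ of a Hermitian holomorphic line bundle $L$, $\dz\dzb\log|s|_h^2=-F_L(\dz,\dzb)$. The Ricci equation $R^\perp(e_1,e_2)=(\a_{11}-\a_{22})\wedge\a_{12}$ together with $\Jp=\pm i$ on $N^{\mp}_fM$ yields $F^{\pm}(\dz,\dzb)=\mp\tfrac{\lambda^2 K_N}{2}$. Applying the identity to $H^{\mp}$ and using $\Delta=(4/\lambda^2)\dz\dzb$ produces~(\ref{R3}), while applying it to $\phi^{\pm}$ and invoking $\Delta\log\lambda=-K$ to absorb the $\lambda^{-4}$ factor coming from the $(2,0)$-nature of $\Phi^{\pm}$ produces~(\ref{R4}) away from the isolated zeros of $\Phi^{\pm}$; both identities extend smoothly across the zeros by the absolute value type structure from part~(ii). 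The main obstacle will be the careful tracking of the $\pm$ signs among the eigenvalues of $\Jp$ on $N^{\pm}_fM$, the curvatures $F^{\pm}$, the Ricci identity, and the $\mp K_N$ appearing in the statement; once these are aligned, both Laplacian identities fall out mechanically.
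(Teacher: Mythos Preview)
Your proof is correct and takes a genuinely more conceptual route than the paper's. Both arguments start from Proposition~\ref{glphi} to obtain the holomorphicity of $\Phi^{\pm}$ and of $H^{\mp}$, and both deduce~(i) by the Leibniz rule and the nondegeneracy of the pairing between $N^{+}_fM$ and $N^{-}_fM$. The difference is in~(ii) and~(iii). The paper works entirely with scalar quantities: it shows $H^3\pm iH^4$ is of holomorphic type via the first-order equation coming from Proposition~\ref{glphi}(iv) and an explicit lemma from \cite{EGT}, and then obtains $\|H\|^2-(K-c)\mp K_N$ by dividing the holomorphic scalar $\psi^{\pm}=\langle\phi^{\pm},H^{\mp}\rangle$ (whose modulus squared is $\tfrac{\lambda^4\|H\|^2}{16}(\|H\|^2-(K-c)\mp K_N)$) by the already-handled $\|H\|^2$; for~(iii) it computes $\omega_{34}$ in the frame $e_3=H/\|H\|$ to get \eqref{R3}, and uses $\Delta\log|\psi^{\pm}|^2=0$ to get \eqref{R4}. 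You instead equip $N^{\pm}_fM$ with its Koszul--Malgrange holomorphic structure, regard $H^{\mp}$ and $\phi^{\pm}$ as holomorphic sections of the respective Hermitian line bundles, and extract both the absolute-value-type property and both Laplacian identities uniformly from the Chern curvature formula $\partial\bar\partial\log|s|_h^2=-F(\partial,\bar\partial)$ together with the identification $F^{\pm}(\partial,\bar\partial)=\mp\tfrac{\lambda^2K_N}{2}$. Your approach is cleaner and treats \eqref{R3} and \eqref{R4} on the same footing; the paper's has the advantage of being entirely explicit and not invoking line-bundle machinery beyond what is already set up.
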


\begin{proof}
(i) The holomorphicity of $\Psi^{\pm}$ follows from Proposition \ref{glphi}.
The zeros of $\Psi^{\pm}$ are precisely the points where
$\langle \Phi^{\pm}, H\rangle=0$, which is equivalent to $\Phi^{\pm}=0$ at points where $H\neq 0$.

(ii) Let $(U,z)$ be a complex chart. From Proposition \ref{glphi}(iv) we have
$$\nap_{\dzb}H= \pm i\Jp \nap_{\dzb}H.$$ 
This is equivalently written as
$$( H^3 \pm iH^4)_{\bar z}= \mp i\w_{34}(\dzb)( H^3 \pm iH^4),$$
where $H=H^3e_3+H^4e_4$, and $\{e_3,e_4=\Jp e_3\}$ is a local orthonormal frame field of $N_fM$.
From \cite[Lemma 9.1.]{EGT} it follows that the function $H^3 \pm iH^4$ is of holomorphic type and this proves our claim for $\|H\|^2$.

From part (i), the function $\langle \phi^{\pm}, H^{\mp}\rangle$ is holomorphic, where $\phi^{\pm}$ is given by (\ref{sf}).
Moreover,
\be \label{av}
|\langle \phi^{\pm}, H^{\mp}\rangle|^2=\frac{\lambda^4 \|H\|^2}{16}\left(\|H\|^2-(K-c) \mp K_N\right),
\ee
where $\lambda$ is the conformal factor. Clearly, the function
$$t=\frac{4\langle \phi^{\pm}, H^{\mp}\rangle}{\lambda^2(H^3 \pm iH^4)}$$
can be smoothly extended to the zeros of $H$ as a holomorphic type function. 
Since $|t^2|=\|H\|^2-(K-c) \mp K_N$, this completes the proof.

(iii) Away from the zeros of $H$, we consider the local orthonormal frame field $\{e_3=H/\|H\|, e_4=\Jp e_3\}$ of the normal bundle.
Using Proposition \ref{glphi}(iv), we find that the normal connection form is given by
\begin{equation*}
\w_{34}=\pm *d\log \|H\|.
\end{equation*}
Then (\ref{R3}) follows from (\ref{normcf}) and the above.

We choose a complex chart
with coordinate $z$, away from the zeros of $\Psi^{\pm}$. From the holomorphicity of $\Psi^{\pm}$
we have that $$\Delta \log |\langle \phi^{\pm}, H^{\mp}\rangle|^2=0.$$
Equation (\ref{R4}) follows from \eqref{av}
and the fact that $\Delta \log \lambda =-K$.
\qed
\end{proof}
\medskip

\begin{proposition} \label{charact}
Let $f\colon M\to \Q^4_c$ be a compact surface with mean curvature vector field $H$ and 
vertically harmonic Gauss lift $G_{\pm}$.
\begin{enumerate}[topsep=0pt,itemsep=-1pt,partopsep=1ex,parsep=0.5ex,leftmargin=*, label=(\roman*), align=left, labelsep=0em]
\item If $f$ is non-minimal, then $$2\chi_{N}=\pm N(\|H\|^2).$$
\item If $f$ is neither minimal nor superconformal, then $$2(2\chi\pm \chi_N)=-N\left(\|H\|^2-(K-c) \mp K_N\right).$$
\end{enumerate}
\end{proposition}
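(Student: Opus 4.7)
The plan is to combine the Laplacian identities of Proposition~\ref{prop}(iii) with the integration formula $\int_M \Delta\log u = -2\pi N(u)$ for an absolute value type function $u\not\equiv 0$ on a compact surface, together with the Gauss--Bonnet relations $\int_M K = 2\pi\chi$ and $\int_M K_N = 2\pi\chi_N$. Both parts reduce to this template once one knows that the relevant function is absolute value type and does not vanish identically; both of these facts are supplied by Proposition~\ref{prop}(ii) after a brief verification in case (ii).

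For part (i), non-minimality of $f$ gives $\|H\|^2\not\equiv 0$, and Proposition~\ref{prop}(ii) ensures it is of absolute value type. Integrating the identity $\Delta\log\|H\|^2 = \mp 2K_N$ from Proposition~\ref{prop}(iii) over $M$ yields
\[
-2\pi N(\|H\|^2) \;=\; \mp 2\int_M K_N \;=\; \mp 4\pi\chi_N,
\]
which rearranges to the stated formula $2\chi_N = \pm N(\|H\|^2)$.

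For part (ii), the only preliminary point that requires care is showing that the holomorphic differential $\Psi^{\pm}$ of Proposition~\ref{prop}(i) is not identically zero, since this is the hypothesis needed to invoke the Laplacian identity for $\|H\|^2-(K-c) \mp K_N$. If $\Psi^{\pm}\equiv 0$, then because $H\not\equiv 0$ by non-minimality, $\Phi^{\pm}$ vanishes on the open set $\{H\neq 0\}$; by the holomorphicity of $\Phi^{\pm}$ (Proposition~\ref{glphi}) and Lemma~\ref{zeros}, this forces $\Phi^{\pm}\equiv 0$ on $M$, whence Lemma~\ref{pseudo}(ii) makes $f$ superconformal, contradicting the assumption. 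With $\Psi^{\pm}\not\equiv 0$, Proposition~\ref{prop}(ii) makes $u=\|H\|^2-(K-c) \mp K_N$ an absolute value type function that is not identically zero, so integrating Proposition~\ref{prop}(iii) gives
\[
-2\pi N(u) \;=\; 2\int_M (2K\pm K_N) \;=\; 8\pi\chi \pm 4\pi\chi_N,
\]
and dividing by $-2\pi$ produces $2(2\chi\pm\chi_N) = -N(\|H\|^2-(K-c) \mp K_N)$. The only nontrivial step is the ruling out of $\Psi^{\pm}\equiv 0$; everything else is routine integration once the framework of Proposition~\ref{prop} is in place.
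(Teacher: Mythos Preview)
Your proof is correct and follows exactly the same approach as the paper: integrate the Laplacian identities \eqref{R3} and \eqref{R4} of Proposition~\ref{prop} using the integral formula for absolute value type functions and the Gauss--Bonnet relations. The only difference is that you spell out the verification $\Psi^{\pm}\not\equiv 0$ needed to invoke \eqref{R4}, whereas the paper leaves this implicit (it is also immediate from the description of $Z(\Psi^{\pm})$ in Proposition~\ref{prop}(i) together with the isolated zeros of $\|H\|^2$).
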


\begin{proof}
The proofs of (i) and (ii) follow immediately from Proposition \ref{prop}(ii), by integrating \eqref{R3} and \eqref{R4}, respectively.
\qed
\end{proof}
\medskip

\noindent{\emph{Proof of Theorem \ref{HT}:}}
From the assumption and Proposition \ref{prop}(i) we obtain that $\Psi^{\pm}\equiv0$.
Since $f$ is non-minimal, Proposition \ref{prop}(i-ii) implies that $\Phi^{\pm}\equiv0$.
From Lemma \ref{pseudo}(ii) it follows that $f$ is superconformal with normal curvature $\pm K_N\geq0$.
Therefore, the Euler number of the normal bundle of $f$ satisfies $\pm\chi_N\geq0$, and it vanishes if and only if $K_N=0$ on $M$.
If $\chi_N=0$, then Lemma \ref{pseudo}(ii) implies that $f$ is totally umbilical.
\qed
\medskip

\noindent{\emph{Proof of Theorem \ref{AF}:}}
(i) For any $\th \in \R$ define the symmetric section $\b^{\pm}_{\th}\in \Gamma(\text{Hom}(TM\times TM, N_fM))$ by
$$\b^{\pm}_{\th}(X,Y)=J_{\th/2}^{\perp}\left(\a(J_{\mp\th/4}X,J_{\mp\th/4}Y)-\langle X,Y\rangle H\right)+\langle X,Y\rangle H,$$
where $X,Y\in TM$, $J_{\th}^{\perp}=\cos\th I +\sin\th \Jp$ and $J_{\th}= \cos\th I +\sin\th J$.
We argue that $\b^{\pm}_{\th}$
satisfies the Gauss, Codazzi and Ricci equations. Clearly, we have that
\be\label{csff}
(\b^{+}_{\th})^{(2,0)}=\Phi^-+e^{-i\th}\Phi^+,\;\;  (\b^{-}_{\th})^{(2,0)}=e^{i\th}\Phi^-+\Phi^+
\ee
and
$$(\b^{\pm}_{\th})^{(1,1)}=\a^{(1,1)}.$$
In terms of a local complex coordinate $z=x+iy$,
the Gauss equation for $f$ is written as
$$(K-c)\lambda^4/4=\left\|\a(\dz,\dzb)\right\|^2-\left\|\a(\dz,\dz)\right\|^2,$$
where $\lambda$ is the conformal factor.
Using that $\left\|\a(\dz,\dz)\right\|= \left\|\b^{\pm}_{\th}(\dz,\dz)\right\|$, we deduce that
$\b^{\pm}_{\th}$ satisfies the Gauss equation. 
Setting $e_1=\d_x/\lambda, e_2=\d_y/\lambda$ and using (\ref{csff}), we have that
$$(\b^{\pm}_{11}-\b^{\pm}_{22})\wedge \b^{\pm}_{12}=(\a_{11}-\a_{22})\wedge \a_{12},$$
where $\a_{ij}=\a(e_1,e_j)$ and $\b^{\pm}_{ij}=\b^{\pm}_{\th}(e_i,e_j),\; i,j=1,2$. Therefore, $\b^{\pm}_{\th}$ satisfies (\ref{Ricci}).
Using (\ref{CC}) and Proposition \ref{glphi}, (\ref{csff}) gives that
$$\nap_{\dzb}\b^{\pm}_{\th}(\dz,\dz)=\frac{\lambda^2}{2}\nap_{\dz}H$$
and thus, $\b^{\pm}_{\th}$ satisfies the Codazzi equation. 
By the fundamental theorem of submanifolds, for every $\th \in \R$ there exists an isometric immersion
$f^{\pm}_{\theta}\colon M\to \Q^4_c$ and an orientation-preserving 
parallel vector bundle isometry $T_{\th}\colon N_fM\to N_{f^{\pm}_{\th}}M$ such that
$\a_{f^{\pm}_{\th}}=T_{\th}\circ \b^{\pm}_{\th}$. 
Clearly, $T_{\th}H$ is the mean curvature vector field of $f^{\pm}_{\th}$,
for any $\theta \in \mathbb{S}^1\simeq \R/2\pi \mathbb{Z}$ and $f^{\pm}_0=f$. 

(ii) From Proposition \ref{HOLGL} it follows that $\Phi^{\pm}\equiv0$. Then, (\ref{csff}) yields that $(\b^{\pm}_{\th})^{(2,0)}=\Phi$ for any
$\th \in \mathbb{S}^1$. This implies that each $T_{\th}$ preserves the Hopf differential and the mean curvature vector field and 
consequently, it preserves the second fundamental form as well. This shows that the family is trivial.

(iii) Without loss of generality, we may assume that $\tilde{\th}=0$.
The distortion differential of the pair $(f,f^{\pm}_{\th})$ vanishes identically.
Lemma \ref{qke}(i) implies that any orientation and mean curvature vector field-preserving parallel 
vector bundle isometry $T\colon N_fM\to N_{f^{\pm}_{\th}}M$ preserves the Hopf differential, and consequently
the second fundamental form as well. Hence, $\b^{\pm}_{\th}=T_{\th}^{-1}\circ \a_{f_{\th}}=\a$
and (\ref{csff}) implies that $(1-e^{\mp i\th})\Phi^{\pm}\equiv0$.
Since $\th\neq0$, the last relation yields $\Phi^{\pm}\equiv0$ and thus, $f$ is superconformal.
\qed
\smallskip

\begin{proposition}\label{IAF}
Let $f\colon M\to \Q^4_c$ be a simply-connected surface with vertically harmonic Gauss lift ${G}_{\pm}$.
If $f$ is neither minimal nor superconformal, then $\{f\}\cup \mathcal{M}^{\pm}(f)=\mathbb{S}^1$.
\end{proposition}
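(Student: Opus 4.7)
\noindent\emph{Proof proposal.} The plan is to exhibit $\{f\}\cup\mathcal{M}^\pm(f)$ as the image of the one-parameter family $f^\pm_\theta$, $\theta\in\mathbb{S}^1$, furnished by Theorem \ref{AF}, and to identify this parametrization with $\mathbb{S}^1$. First, using (\ref{csff}) a direct computation gives the distortion differential of $(f,f^\pm_\theta)$ as
\[
Q_{f,f^\pm_\theta}=\Phi-(\b^\pm_\theta)^{(2,0)}=(1-e^{\mp i\theta})\Phi^\pm,
\]
so that $Q^\mp_{f,f^\pm_\theta}=0$ and $Q^\pm_{f,f^\pm_\theta}\not\equiv 0$ for $\theta\neq 0$. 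Hence $f^\pm_\theta\in\{f\}\cup\mathcal{M}^\pm(f)$ for every $\theta$, and the map $\theta\mapsto [f^\pm_\theta]$ is injective by Theorem \ref{AF}(iii) combined with the non-superconformality of $f$.

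The crux is surjectivity. Given $\tilde f\in\mathcal{M}^\pm(f)\subset\mathcal{N}^\pm(f)$, Proposition \ref{angles} produces a smooth function $\theta^\pm\colon M\smallsetminus Z^\pm\to (0,2\pi)$ with $Q^\pm=(1-e^{\mp i\theta^\pm})\Phi^\pm$. Since $G_\pm$ is vertically harmonic, Proposition \ref{glphi} asserts that $\Phi^\pm$ is holomorphic, so the coefficient $h^\pm$ in (\ref{defh}) vanishes identically on $M\smallsetminus M_0^\pm$. The identity $\theta^\pm_{\bar z}=\mp i h^\pm(1-e^{\pm i\theta^\pm})$ derived in the proof of (\ref{B})--(\ref{lm}) then forces $\theta^\pm_{\bar z}=0$ on $M\smallsetminus Z^\pm$. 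Thus $\theta^\pm$ is holomorphic and real valued; since $M$ is simply-connected and $Z^\pm$ is a discrete closed subset, $M\smallsetminus Z^\pm$ is connected, and $\theta^\pm$ is a constant $\theta_0\in(0,2\pi)$.

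To match $\tilde f$ with $f^\pm_{\theta_0}$, let $T\colon N_fM\to N_{\tilde f}M$ be a parallel, orientation and mean-curvature-preserving bundle isometry. The distortion relation gives $T^{-1}\circ\tilde\Phi=\Phi-Q=\Phi^\mp+e^{\mp i\theta_0}\Phi^\pm=(\b^\pm_{\theta_0})^{(2,0)}$, while the $(1,1)$-components coincide because $T^{-1}\tilde H=H$. Therefore $T^{-1}\circ\tilde\a=\b^\pm_{\theta_0}=T_{\theta_0}^{-1}\circ\a_{f^\pm_{\theta_0}}$, and applying the fundamental theorem of submanifolds to the composite $T_{\theta_0}\circ T^{-1}\colon N_{\tilde f}M\to N_{f^\pm_{\theta_0}}M$ produces an orientation-preserving isometry of $\Q^4_c$ identifying $\tilde f$ with $f^\pm_{\theta_0}$, which yields surjectivity and completes the proof.

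The main delicate step is the constancy of $\theta^\pm$: vertical harmonicity of $G_\pm$ degenerates both equations (\ref{B}) and (\ref{lm}) so that $\theta^\pm$ becomes holomorphic outright, bypassing the bounded-harmonic/maximum-principle argument used in the proof of Theorem \ref{basic}, and simply-connectedness of $M$ is precisely what promotes this local conclusion into a global one across the isolated zero set $Z^\pm$, turning the pointwise normal form of Proposition \ref{angles} into a global circle parametrization.
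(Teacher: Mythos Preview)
Your proof is correct and follows essentially the same route as the paper's: invoke Proposition~\ref{angles} to write $Q^{\pm}=(1-e^{\mp i\theta^{\pm}})\Phi^{\pm}$, use the holomorphicity of $\Phi^{\pm}$ (from vertical harmonicity of $G_{\pm}$) to force $\theta^{\pm}$ constant, and then match $\tilde f$ with $f^{\pm}_{\theta^{\pm}}$. The only cosmetic differences are that the paper argues constancy directly from the fact that $(1-e^{\mp i\theta^{\pm}})$ is a ratio of two holomorphic differentials, whereas you route through $h^{\pm}=0$ and the intermediate identity $\theta^{\pm}_{\bar z}=\mp i h^{\pm}(1-e^{\pm i\theta^{\pm}})$; and the paper concludes congruence via $Q_{\hat f,f^{\pm}_{\theta^{\pm}}}\equiv 0$, whereas you unpack this into an explicit application of the fundamental theorem. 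One small remark: in your closing paragraph, simply-connectedness is not what gives constancy of $\theta^{\pm}$ (mere connectedness of $M\smallsetminus Z^{\pm}$ suffices there); it is needed earlier, in Theorem~\ref{AF}, to guarantee that the family $f^{\pm}_{\theta}$ exists globally on $M$.
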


\begin{proof}
Let $\hat f\in \mathcal{M}^{\pm}(f)$.
Appealing to Proposition \ref{angles}, we have that the distortion differential of the pair $(f,\hat{f})$ is written as
$$Q_{f,\hat{f}}=(1-e^{\mp i\th^{\pm}})\Phi^{\pm}\;\;\; \mbox{on}\;\;\; M\smallsetminus Z(Q_{f,\hat{f}}),$$
where $\th^{\pm}\in\mathcal{C}^{\infty}(M\smallsetminus Z(Q_{f,\hat{f}}))$.
From Lemma \ref{qke}(i) and Proposition \ref{glphi}, we know that $Q_{f,\hat{f}}$ and $\Phi^{\pm}$ are holomorphic.
Therefore, $\th^{\pm}$ is constant. Using (\ref{csff}), we obtain that
$Q_{f,f^{\pm}_{\th^{\pm}}}\equiv Q_{f,\hat{f}}$
and thus, $Q_{\hat{f},f^{\pm}_{\th^{\pm}}}\equiv0$.
Consequently, $\hat f$ is congruent to $f^{\pm}_{\th^{\pm}}$ and this completes the proof.
\qed
\end{proof}
\medskip

The following proposition determines the moduli space of simply-connected surfaces with parallel mean curvature vector field.
The two-parameter family given here, coincides up to a parameter transformation,
with the one given by Eschenburg-Tribuzy \cite{ET2}. 

\begin{proposition} \label{CMC}
Let $f\colon M\to \Q^4_c$ be a simply-connected surface with parallel mean curvature vector field $H\neq 0$. 
Then:
\begin{enumerate}[topsep=0pt,itemsep=-1pt,partopsep=1ex,parsep=0.5ex,leftmargin=*, label=(\roman*), align=left, labelsep=0em]
\item There exists a two-parameter family of isometric immersions
$f_{\th,\varphi}\colon M\to \Q^4_c$, $(\th,\varphi) \in \mathbb{S}^1\times\mathbb{S}^1$,
which have the same mean curvature with $f_{0,0}=f$.
\item The family is trivial if and only if $f$ is totally umbilical.
\item If $f$ is not totally umbilical, then $\mathcal{M}(f)=\mathbb{S}^1\times\mathbb{S}^1$.
\end{enumerate}
\end{proposition}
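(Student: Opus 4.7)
The plan is to build the two-parameter family by applying Theorem \ref{AF}(i) successively to both vertically harmonic Gauss lifts of $f$, and then to show that the resulting family exhausts the moduli space by adapting the constancy argument from the proof of Proposition \ref{IAF}. Note at the outset that $\nap H = 0$ together with Proposition \ref{glphi} makes both Gauss lifts $G_{\pm}$ vertically harmonic, so both $\Phi^+$ and $\Phi^-$ are holomorphic.

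For part (i), I will apply Theorem \ref{AF}(i) to $G_{+}$ to obtain the one-parameter family $\{f^{+}_{\theta}\}_{\theta \in \mathbb{S}^1}$ with parallel vector bundle isometries $T_{\theta}\colon N_{f}M \to N_{f^{+}_{\theta}}M$ carrying $H$ to the mean curvature of $f^{+}_{\theta}$. Because $T_{\theta}$ is parallel, this mean curvature is itself parallel, so the hypotheses of Theorem \ref{AF} apply to $f^{+}_{\theta}$ as well. Applying Theorem \ref{AF}(i) once more to the $G_{-}$ lift of $f^{+}_{\theta}$, set $f_{\theta,\varphi} = (f^{+}_{\theta})^{-}_{\varphi}$. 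Using \eqref{csff} twice yields, up to the composite parallel isometry, that the Hopf differential of $f_{\theta,\varphi}$ equals $e^{i\varphi}\Phi^{-} + e^{-i\theta}\Phi^{+}$, and clearly $f_{0,0} = f$.

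For part (ii), the formula above shows that the distortion differential of the pair $(f, f_{\theta,\varphi})$ is $(1-e^{i\varphi})\Phi^{-} + (1-e^{-i\theta})\Phi^{+}$, which vanishes identically for all $(\theta,\varphi)$ if and only if $\Phi^{+} \equiv \Phi^{-} \equiv 0$, i.e., $\Phi \equiv 0$. By Lemma \ref{pseudo}(i) this is precisely total umbilicity.

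For part (iii), I first observe that $f$ non-totally-umbilical together with $\nap H = 0$ precludes superconformality: otherwise Proposition \ref{HOLGL} applied to each of the vertically harmonic lifts $G_{\pm}$ would give $\Phi^{+} \equiv \Phi^{-} \equiv 0$, hence total umbilicity. So both $\Phi^{+}$ and $\Phi^{-}$ are non-identically-zero holomorphic sections. For an arbitrary $\tilde{f} \in \mathcal{M}(f)$, Proposition \ref{angles} yields smooth functions $\theta, \varphi$ on the complement of the isolated zero-sets of $Q^{\pm}_{f,\tilde{f}}$ with $Q^{+}_{f,\tilde{f}} = (1 - e^{-i\theta})\Phi^{+}$ and $Q^{-}_{f,\tilde{f}} = (1 - e^{i\varphi})\Phi^{-}$, with the convention $\theta = 0$ (resp.\ $\varphi = 0$) when $Q^{+}$ (resp.\ $Q^{-}$) vanishes identically. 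The holomorphicity of numerators and denominators makes each ratio $Q^{\pm}/\Phi^{\pm}$ a meromorphic function whose image lies on a circle; by the open mapping theorem each is constant, so $\theta, \varphi \in \mathbb{S}^1$ are constants. Rearranging yields $T^{-1}\tilde{\Phi} = e^{i\varphi}\Phi^{-} + e^{-i\theta}\Phi^{+}$, matching the Hopf differential of $f_{\theta,\varphi}$; the fundamental theorem of submanifolds then gives $\tilde{f} \cong f_{\theta,\varphi}$. Injectivity of the parametrization on $\mathbb{S}^1 \times \mathbb{S}^1$ follows from the non-vanishing of both $\Phi^{+}$ and $\Phi^{-}$.

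I expect the main obstacle to be in part (iii): showing constancy of the angle functions $\theta$ and $\varphi$ from Proposition \ref{angles} on a simply-connected but possibly non-compact $M$, where the maximum principle is unavailable. The rigidity is extracted via the same device used in the proof of Proposition \ref{IAF}, namely the fact that a meromorphic function with image on a circle must be constant—an argument made possible precisely because $\nap H = 0$ makes all of $Q^{\pm}$ and $\Phi^{\pm}$ holomorphic.
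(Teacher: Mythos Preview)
Your proof is correct and follows essentially the same route as the paper's. The paper constructs the family as $(f^{-}_{\th})^{+}_{\varphi}$ rather than your $(f^{+}_{\th})^{-}_{\varphi}$, and for (ii) it invokes Theorem~\ref{AF}(ii,iii) together with the observation that a superconformal surface with parallel $H$ is totally umbilical, whereas you read off triviality directly from the distortion-differential formula; for (iii) the paper asserts constancy of $\th^{\pm}$ tersely from holomorphicity of $Q$ and $\Phi^{\pm}$, while you spell out the open-mapping-theorem argument---but these are cosmetic differences in presentation, not in substance.
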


\begin{proof}
(i) Since both Gauss lifts are vertically harmonic, from Theorem \ref{AF} we may consider the two-parameter family
$f_{\th,\varphi}=(f^{-}_{\th})^{+}_{\varphi}, \th,\varphi \in \mathbb{S}^1$. Clearly, $f_{\th,\varphi}$ has the same mean curvature
with $f$.

(ii) From Theorem \ref{AF}, it is clear that $f_{\th,\varphi}$ is congruent to 
$f_{\tilde{\th},\tilde{\varphi}}$ for $(\th,\varphi)\neq (\tilde{\th},\tilde{\varphi})\in \mathbb{S}^1\times \mathbb{S}^1$ if and only if
$f$ is superconformal. Since $H$ is parallel, this can only occur if $f$ is totally umbilical.

(iii) Let $\hat{f}\in \mathcal{M}(f)$. Since $M_0=M_1\subset Z(Q_{f,\hat{f}})$, from Proposition \ref{angles} it follows that
$$Q_{f,\hat{f}}=(1-e^{i\th^-})\Phi^-+(1-e^{-i\th^+})\Phi^+\;\; \mbox{on}\;\; M\smallsetminus Z(Q_{f,\hat{f}}),$$
for $\th^+, \th^-\in \mathcal{C}^{\infty}(M\smallsetminus Z(Q_{f,\hat{f}}))$,
with $\th^{\pm}=0$ if $\hat{f}\in \mathcal{M}^{\mp}(f)$.
From Lemma \ref{qke}(i) and Proposition \ref{glphi}, we know that $Q_{f,\hat{f}},\Phi^-$ and $\Phi^+$ are holomorphic.
The above relation yields that the functions $\th^-$ and $\th^+$ are constant.
Using (\ref{csff}), one easily checks that the distortion differential $Q$ of the
pair $(f,f_{\th,\varphi})$ is given by
\be \label{ddcmc}
Q=(1-e^{i\th})\Phi^-+(1-e^{-i\varphi})\Phi^+.
\ee
From (\ref{ddcmc}) it follows that the distortion differentials of the pairs $(f,\hat{f})$ and $(f,f_{\th^-,\th^+})$
are equal and thus, the distortion differential of the pair $(\hat{f},f_{\th^-,\th^+})$ vanishes identically.
Therefore, $\hat{f}$ is congruent to $f_{\th^-,\th^+}$ and this completes the proof.
\qed
\end{proof}

\begin{remark}
{\em (i) It is clear that if $f$ is not totally umbilical, then $\{f\}\cup\mathcal{M}^-(f)=\mathbb{S}^1\times \{0\}$ and 
$\{f\}\cup\mathcal{M}^+(f)=\{0\}\times \mathbb{S}^1$.

(ii) We recall (cf. \cite{Chen, Yau}) that any surface with parallel mean curvature 
vector field $H\neq 0$ splits as 
$f=j\circ f'$, where $j\colon \Q^3_{c'}\to \Q^4_{c}, c'\geq c$, is a totally umbilical inclusion and 
$f'\colon M\to \Q^3_{c'}$ is a CMC-$h'$ surface with $h'=\pm(\|H\|^2-(c'-c))^{1/2}$.
It is known that there exists locally a bijective correspondence (the so-called Lawson correspondence \cite[Theorem 8]{L}) between CMC surfaces
in 3-dimensional space forms. Since $f_{\th,\varphi}=\hat{j}\circ\hat{f}_{\th,\varphi}$ and $\|H_{f_{\th,\varphi}}\|=\|H\|$,
the surfaces $f'$ and $\hat{f}_{\th,\varphi}$ are in Lawson correspondence for any $\th,\varphi\in \mathbb{S}^1$. In particular, 
$f_{\th,2\pi-\th}$ is congruent to $j\circ f'_{\th}$, where $f'_{\th}$, $\th\in \mathbb{S}^1$, 
is the associated family of $f'$ in $\Q^3_{c'}$ as a CMC surface.}
\end{remark}

\subsection{The moduli space of non-simply-connected surfaces}

Let $M$ be a 2-dimensional oriented Riemannian manifold with nontrivial fundamental group and
$f\colon M\to \Q^4_c$ a non-minimal isometric immersion. 
Consider the universal cover
$(\tilde{M},\tilde\pi)$ of $M$, equipped with metric and orientation that make the covering map $\tilde\pi\colon \tilde{M}\to M$ an 
orientation-preserving local isometry. Then, $\tilde{f}=f\circ\tilde\pi\colon \tilde{M}\to \Q^4_c$ is an isometric immersion.
It is clear that the Gauss lift $\tilde{G}_{\pm}$ of $\tilde{f}$ is vertically harmonic if and only if 
the Gauss lift $G_{\pm}$ of $f$ is vertically harmonic.

If $(f=f_1,f_2)$ is a Bonnet pair, then $(\tilde{f}_1, \tilde{f}_2)$ is also a Bonnet pair, where $\tilde{f}_j=f_j\circ\tilde\pi,\;j=1,2$.
Moreover, $f_2\in\mathcal{N}^{\pm}(f_1)$, if and only if
$\tilde{f}_2\in\mathcal{N}^{\pm}(\tilde{f}_1)$.
If $G_{\pm}$ is vertically harmonic and $f_2\in \mathcal{M}^{\pm}(f_1)$,
then from Proposition \ref{IAF} it follows that $\tilde{f}_2$ is congruent to some $\tilde{f}^{\pm}_{\th}$
in the associated family of $\tilde{f}_1$. Therefore $\{f\}\cup\mathcal{M}^{\pm}(f)$
can be parametrized by the set
$$\left\{\th\in \mathbb{S}^1: \mbox{there exists}\;\; f_{\th}\colon M\to \Q^4_c\;\; \mbox{such that}\;\; \tilde{f}^{\pm}_{\th}=f_{\th}\circ\tilde\pi\right\}.$$
In particular, if $H$ is parallel, then by Proposition \ref{CMC}, the moduli space $\mathcal{M}(f)$ can be parametrized by the set
$$\left\{(\th,\varphi)\in \mathbb{S}^1\times\mathbb{S}^1: \mbox{there exists}\;\; f_{\th,\varphi}\colon M\to \Q^4_c\;\; \mbox{such that}\;\; 
\tilde{f}_{\th,\varphi}=f_{\th,\varphi}\circ\tilde\pi\right\}.$$

The following is essential for the proof of Theorem \ref{verha}. 
For its proof we adopt techniques used in \cite{DV,ST,Vl2}.

\begin{proposition}\label{fins1}
Let $f\colon M\to \Q^4_c$ be a non-minimal surface with mean curvature vector field $H$.
\begin{enumerate}[topsep=0pt,itemsep=-1pt,partopsep=1ex,parsep=0.5ex,leftmargin=*, label=(\roman*), align=left, labelsep=-0.4em]
\item If the Gauss lift $G_{\pm}$ of $f$ is vertically harmonic, then $\{f\}\cup\mathcal{M}^{\pm}(f)$ is either a finite set, or the circle $\mathbb{S}^1$.
\item If $H$ is parallel, then either $\mathcal{M}(f)=\mathbb{S}^1\times \mathbb{S}^1$, or it locally decomposes as 
$\mathcal{M}(f)=\mathcal{V}_0\cup\mathcal{V}_1,$
where each $\mathcal{V}_d, d=0,1$, is either empty, or a disjoint finite union of $d$-dimensional
real-analytic varieties.
\end{enumerate}
\end{proposition}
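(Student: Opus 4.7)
The plan is to pass to the universal cover $\tilde\pi\colon\tilde M\to M$, so that $\tilde f=f\circ\tilde\pi$ is simply-connected and inherits the vertically harmonic Gauss lift. By the discussion preceding the proposition, $\{f\}\cup\mathcal{M}^{\pm}(f)$ in (i), and $\mathcal{M}(f)$ in (ii), are in bijection with the subsets of $\mathbb{S}^1$, respectively $\mathbb{S}^1\times\mathbb{S}^1$, consisting of those parameters for which the associated family descends to $M$; these associated families parametrize the full moduli of $\tilde f$ by Propositions~\ref{IAF} and~\ref{CMC}. The problem thus reduces to analyzing those $\theta$ (respectively $(\theta,\varphi)$) for which $\tilde f^{\pm}_\theta$ (respectively $\tilde f_{\theta,\varphi}$) is invariant, up to ambient isometry, under the action of the deck group $\Gamma=\pi_1(M)$.

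I would first establish the real-analytic dependence of the family on its parameters. The symmetric section $\beta^{\pm}_\theta$ from the proof of Theorem~\ref{AF} depends real-analytically on $\theta$ by the explicit formula, and $\tilde f^{\pm}_\theta$ together with its adapted frame is reconstructed from $\beta^{\pm}_\theta$ by integration of the linear moving-frame ODEs, whose coefficients depend real-analytically on $\theta$. For each $\gamma\in\Gamma$, the identity $\tilde f\circ\gamma=\tilde f$ forces $\gamma^\ast\beta^{\pm}_\theta=\beta^{\pm}_\theta$, so by the uniqueness part of the fundamental theorem of submanifolds there is a unique $\rho_\gamma(\theta)\in\text{Isom}^+(\Q^4_c)$ with $\tilde f^{\pm}_\theta\circ\gamma=\rho_\gamma(\theta)\circ\tilde f^{\pm}_\theta$, and $\theta\mapsto\rho_\gamma(\theta)$ is real-analytic; an analogous statement holds for $\rho_\gamma(\theta,\varphi)$ in the parallel case. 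Consequently $\tilde f^{\pm}_\theta$ (respectively $\tilde f_{\theta,\varphi}$) descends to $M$ if and only if $\rho_\gamma=\text{id}$ for every $\gamma\in\Gamma$.

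For (i), the set $S^{\pm}_\gamma=\{\theta:\rho_\gamma(\theta)=\text{id}\}$ is the zero locus on $\mathbb{S}^1$ of a real-analytic map into $\text{Isom}^+(\Q^4_c)$, so by compactness it is either all of $\mathbb{S}^1$ or a finite set. Hence $S^{\pm}=\bigcap_\gamma S^{\pm}_\gamma$ is either $\mathbb{S}^1$ (if every $S^{\pm}_\gamma$ equals $\mathbb{S}^1$) or finite (being contained in any finite $S^{\pm}_{\gamma_0}$). The non-generic superconformal case is handled separately using Theorem~\ref{AF}(ii), by which the associated family is trivial, together with direct inspection of the distortion differentials. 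For (ii), near any point of the torus, the Noetherian property of real-analytic germs yields that $\bigcap_\gamma S_\gamma$ locally coincides with a finite subintersection and is therefore a real-analytic subvariety of $\mathbb{S}^1\times\mathbb{S}^1$. By Lojasiewicz's structure theorem, such a subvariety of a $2$-dimensional real-analytic manifold locally decomposes into a finite disjoint union of real-analytic submanifolds of dimensions $0$, $1$ and $2$; a $2$-dimensional local piece would, by analyticity and connectedness of the torus, extend globally and yield $\mathcal{M}(f)=\mathbb{S}^1\times\mathbb{S}^1$, while otherwise the local decomposition reduces to $\mathcal{V}_0\cup\mathcal{V}_1$ as in the statement.

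The main obstacle is the rigorous verification of the real-analytic dependence of the intertwining isometries $\rho_\gamma$ on the parameters. This requires combining the explicit real-analytic dependence of $\beta^{\pm}_\theta$ with parameter regularity for the moving-frame ODE solutions and with uniqueness in the fundamental theorem of submanifolds. A secondary technical point is using the Noetherian property in (ii) to reduce a potentially infinite intersection over $\Gamma$ to a locally finite one before invoking Lojasiewicz's stratification.
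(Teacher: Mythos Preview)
Your proposal is correct and follows essentially the same route as the paper: pass to the universal cover, identify $\{f\}\cup\mathcal{M}^{\pm}(f)$ (resp.\ $\mathcal{M}(f)$) with the parameters for which the associated family is deck-invariant, construct the intertwining isometries $\rho_\gamma$, and exploit their real-analytic dependence on the parameter. The only cosmetic differences are that the paper builds the bundle isometry $\Sigma_\theta$ explicitly rather than invoking $\gamma^\ast\beta^{\pm}_\theta=\beta^{\pm}_\theta$, proves the ``finite or all of $\mathbb{S}^1$'' dichotomy via a mean-value/Taylor argument at an accumulation point rather than citing the zero-set structure of real-analytic maps, and does not spell out the Noetherian step or the superconformal case (the latter being vacuous since $\Phi^{\pm}\equiv0$ forces $\mathcal{M}^{\pm}(f)=\emptyset$).
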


\begin{proof}
(i) We claim that for any $\s\in \mathcal{D}$ in the group of deck transformations of the universal cover 
$\tilde\pi\colon \tilde{M}\to M$, the surfaces $\tilde{f}^{\pm}_{\th}\colon \tilde{M}\to \Q^4_c$ in the associated family of
$\tilde{f}=f\circ \tilde\pi$ and $\tilde{f}^{\pm}_{\th}\circ \s$ are congruent for any $\th \in \mathbb{S}^1$.
It is sufficient to show the existence of a parallel vector bundle isometry between the normal bundles of
$\tilde{f}^{\pm}_{\th}$ and $\tilde{f}^{\pm}_{\th}\circ \s$ that preserves the second fundamental forms.
Let $T_{\th}$ be the parallel vector bundle isometry between the normal bundles of $\tilde{f}$ and $\tilde{f}^{\pm}_{\th}$
such that 
$$\a_{\tilde{f}^{\pm}_{\th}}(X,Y)=T_{\th}\left(\tilde{J}_{\th/2}^{\perp}\big(\a_{\tilde{f}}(\tilde{J}_{\mp\th/4}X,\tilde{J}_{\mp\th/4}Y)
-\langle X,Y\rangle H_{\tilde{f}}\big) +\langle X,Y\rangle H_{\tilde{f}} \right)$$
for any $X,Y\in T\tilde{M}$, where $\tilde{J}_{\th}^{\perp}=\cos \th \tilde{I}+\sin \th \tilde{J}^{\perp}$,
$\tilde{J}_{\th}=\cos \th \tilde{I}+\sin \th \tilde{J}$ and $\tilde{J}^{\perp}$, $\tilde{J}$ stand 
for the complex structures of $N_{\tilde{f}}\tilde{M}$ and $T\tilde{M}$, respectively.
Since $\s$ is a deck transformation, we have that
$\tilde{f}\circ \s=\tilde{f}$ and thus, the normal spaces satisfy $N_{\tilde{f}}\tilde{M}(p)=N_{\tilde{f}}\tilde{M}(\s(p))$
at any $p\in \tilde{M}$. We define the vector bundle isometry
$\Sigma_{\th}\colon N_{\tilde{f}^{\pm}_{\th}}\tilde{M}\to N_{\tilde{f}^{\pm}_{\th}\circ \s}\tilde{M}$
which is given pointwise by $$\Sigma_{\th}|_{p}(\xi)= T_{\th}|_{\s (p)}\circ \left(T_{\th}|_{p}\right)^{-1}(\xi),\;\; 
\xi \in N_{\tilde{f}_{\th}}\tilde{M}(p).$$
The second fundamental forms of $\tilde{f}^{\pm}_{\th}$ and $\tilde{f}^{\pm}_{\th}\circ \s$ are related
at $p\in \tilde{M}$ by
\begin{eqnarray*}
\a_{\tilde{f}^{\pm}_{\th}\circ \s}|_{p}(X,Y)&=&\a_{\tilde{f}^{\pm}_{\th}}|_{\s(p)}(\s_{*}X,\s_{*}Y)\\
&=& T_{\th}|_{\s(p)}\left(\tilde{J}_{\th/2}^{\perp}\big(\a_{\tilde{f}}|_{\s(p)}(\tilde{J}_{\mp\th/4}\s_{*}X,\tilde{J}_{\mp\th/4}\s_{*}Y)
-\langle X,Y\rangle H_{\tilde{f}}|_{\s(p)}\big)\right.\\
& &\left.+\langle X,Y\rangle H_{\tilde{f}}|_{\s(p)} \right)\\
&=& T_{\th}|_{\s(p)}\left(\tilde{J}_{\th/2}^{\perp}\big(\a_{\tilde{f}\circ \s}|_{p}(\tilde{J}_{\mp\th/4}X,\tilde{J}_{\mp\th/4}Y)
-\langle X,Y\rangle H_{\tilde{f}\circ \s}|_{p}\big)\right.\\
& &\left.+\langle X,Y\rangle H_{\tilde{f}\circ \s}|_{p} \right)\\
&=& T_{\th}|_{\s(p)}\left(\tilde{J}_{\th/2}^{\perp}\big(\a_{\tilde{f}}|_{p}(\tilde{J}_{\mp\th/4}X,\tilde{J}_{\mp\th/4}Y)
-\langle X,Y\rangle H_{\tilde{f}}|_{p}\big)+\langle X,Y\rangle H_{\tilde{f}}|_{p} \right)\\
&=& \Sigma_{\th}|_{p}\circ \a_{\tilde{f}^{\pm}_{\th}}|_{p}(X,Y)
\end{eqnarray*}
for any $X,Y\in T\tilde{M}$ and thus, $\Sigma_{\th}$ preserves the second fundamental forms.
For any section $\xi$ of $N_{\tilde{f}^{\pm}_{\th}}\tilde{M}$ we have $\Sigma_{\th}\xi=T_{\th}(\eta \circ \s^{-1})\circ \s$, where 
$\xi=T_{\th}\eta$ for a section $\eta$ of $N_{\tilde f}\tilde{M}$. Using the fact that for any section $\delta$
of $N_{\tilde f}\tilde{M}$ and any deck transformation $\s$ we have that $\nap_X(\delta \circ \s)=\nap_{\s_{*}X}\delta \circ \s$,
we obtain
\begin{eqnarray*}
(\nap_X\Sigma_{\th})\xi&=& \nap_X\left(T_{\th}(\eta \circ \s^{-1})\circ \s\right)-T_{\th}\left(\nap_X\eta \circ \s^{-1}\right)\circ \s \\
&=& \left(\nap_{\s_{*}X}T_{\th}(\eta \circ \s^{-1})\right)\circ \s - T_{\th}\left(\nap_X\eta \circ \s^{-1}\right)\circ \s \\
&=&  T_{\th}\left(\nap_{\s_{*}X}(\eta \circ \s^{-1})-\nap_X\eta \circ \s^{-1}\right)\circ \s,
\end{eqnarray*}
where, by abuse of notation, $\nap$ stands for the normal connection of $\tilde{f}, \tilde{f}^{\pm}_{\th}$ and $\tilde{f}^{\pm}_{\th}\circ \s$.
Observe that $$\nap_{\s_{*}X}(\eta \circ \s^{-1})=\nap_X\eta \circ \s^{-1},$$
and thus $\Sigma_{\th}$ is parallel and the claim has been proved.

This allows us to define a homomorphism $S_{\th}\colon \mathcal{D}\to \text{Isom}(\Q^4_c)$ for each $\th \in [0,2\pi]$, such that
$$\tilde{f}^{\pm}_{\th}\circ \s=S_{\th}\circ \tilde{f}^{\pm}_{\th},\;\; \s \in \mathcal{D}.$$
Thus, $\th \in \{f\}\cup\mathcal{M}^{\pm}(f)$ if and only if $S_{\th}(\mathcal{D})=\{I\}$.
Assume that $\{f\}\cup\mathcal{M}^{\pm}(f)$ is infinite and let $\{\th_{m}\}$ be a sequence in
$\{f\}\cup\mathcal{M}^{\pm}(f)$ which converges to some $\th_0\in [0,2\pi]$. From $S_{\th_{m}}(\mathcal{D})=\{I\}$
for all $m\in \mathbb{N}$ we obtain that $S_{\th_0}(\mathcal{D})=\{I\}$. Let $\s \in \mathcal{D}$. By the mean value theorem applied 
to each entry $(S_{\th}(\s))_{jk}$ of the corresponding matrix, we have
\be \label{mvt}
\frac{d}{d\th}(S_{\th}(\s))_{jk}(\mathring{\th}_m)=0
\ee
for some $\mathring{\th}_m$ which lies between $\th_0$ and $\th_m$. By continuity it follows that
$$\frac{d}{d\th}(S_{\th}(\s))_{jk}(\th_0)=0.$$
Consider the sequence $\{\mathring{\th}_m\}$ that converges to $\th_0$ and observe that in view of (\ref{mvt}), 
a similar argument gives
$$\frac{d^2}{d\th^2}(S_{\th}(\s))_{jk}(\th_0)=0.$$
Repeating the argument yields $$\frac{d^n}{d\th^n}(S_{\th}(\s))_{jk}(\th_0)=0$$ for any integer $n\geq 1$. 
From the definition of the associated family, it is clear that $f^{\pm}_{\th}$ depends on the parameter $\th$ in a real-analytic way.
Since $S_{\th}(\s)$ is an analytic curve in $\text{Isom}(\Q^4_c)$, we conclude that $S_{\th}(\s)=I$ for each $\s \in \mathcal{D}$,
and thus $\{f\}\cup\mathcal{M}^{\pm}(f)=\mathbb{S}^1$.

(ii) We claim that for any $\s\in \mathcal{D}$, the surfaces $\tilde{f}_{\th,\varphi}\colon \tilde{M}\to \Q^4_c$ 
and $\tilde{f}_{\th,\varphi}\circ \s$ in $\mathcal{M}(\tilde f)$ are congruent for any $(\th,\varphi) \in \mathbb{S}^1\times \mathbb{S}^1$.
Let $T_{\th,\varphi}$ be the parallel vector bundle isometry between the normal bundles of $\tilde{f}$ and $\tilde{f}_{\th,\varphi}$
such that 
$$\a_{\tilde{f}_{\th,\varphi}}(X,Y)=T_{\th,\varphi}\left(\tilde{J}_{(\th+\varphi)/2}^{\perp}
\big(\a_{\tilde{f}}(\tilde{J}_{(\th-\varphi)/4}X,\tilde{J}_{(\th-\varphi)/4}Y)
-\langle X,Y\rangle H_{\tilde{f}}\big) +\langle X,Y\rangle H_{\tilde{f}} \right)$$
for any $X,Y\in T\tilde{M}$.
We define the vector bundle isometry
$\Sigma_{\th,\varphi}\colon N_{\tilde{f}_{\th,\varphi}}\tilde{M}\to N_{\tilde{f}_{\th,\varphi}\circ \s}\tilde{M}$
which is given pointwise by $$\Sigma_{\th,\varphi}|_{p}(\xi)= T_{\th,\varphi}|_{\s (p)}\circ 
\left(T_{\th,\varphi}|_{p}\right)^{-1}(\xi),\;\; \xi \in N_{\tilde{f}_{\th,\varphi}}\tilde{M}(p).$$
As in the proof of part (i) above, it can be shown that $\Sigma_{\th,\varphi}$ is parallel and preserves the second fundamental forms,
and the claim follows.
This allows us to define a homomorphism $S_{\th,\varphi}\colon \mathcal{D}\to \text{Isom}(\Q^4_c)$ for each $\th,\varphi \in [0,2\pi]$, such that
$$\tilde{f}_{\th,\varphi}\circ \s=S_{\th,\varphi}\circ \tilde{f}_{\th,\varphi},\;\; \s \in \mathcal{D}.$$
Clearly, $(\th,\varphi) \in \mathcal{M}(f)$ if and only if $S_{\th,\varphi}(\mathcal{D})=\{I\}$.
Since $\tilde{f}_{\th,\varphi}$ is real-analytic with respect to $(\th,\varphi)$, it follows that $\mathcal{M}(f)$ is a real-analytic set.
According to Lojacewisz's structure theorem \cite[Theorem 6.3.3.]{KP}, $\mathcal{M}(f)$ locally decomposes as 
$$\mathcal{M}(f)=\mathcal{V}_0\cup\mathcal{V}_1\cup\mathcal{V}_2,$$
where each $\mathcal{V}_d, 0\leq d\leq 2$, is either empty, or a disjoint finite union of $d$-dimensional
real-analytic subvarieties. If $\mathcal{M}(f)\neq \mathbb{S}^1\times \mathbb{S}^1$, then $\mathcal{V}_2=\emptyset$ and this completes the proof.
\qed
\end{proof}

\subsection{Surfaces in $\R^4$}
In the sequel, we deal with surfaces in $\R^4$ whose one component of the Gauss map is harmonic.
We regard the Grassmannian $Gr(2,4)$ of oriented 2-planes in $\R^4$ as a submanifold in $\Lambda^2\R^4$
via the Pl\"{u}cker embedding. 
The inner product of two simple 2-vectors in $\Lambda^2\R^4$ is given by 
$$\<\< v_1\wedge v_2,w_1\wedge w_2\>\>=\det(\<v_j,w_k\>).$$
Then, $\Lambda^2\R^4$ splits orthogonally into the eigenspaces
of the Hodge star operator $\star$, denoted by $\Lambda^2_{+}\R^4$ and $\Lambda^2_{-}\R^4$, corresponding to the eigenvalues $1$ and
$-1$, respectively. An element $a\wedge b$ of $Gr(2,4)$, where $a,b$ are orthonormal vectors in $\R^4$, decomposes as 
$$a\wedge b=(a\wedge b)_{+}+(a\wedge b)_{-},\;\; \mbox{where}\;\; (a\wedge b)_{\pm}=\frac{1}{2}\big(a\wedge b\pm \star(a\wedge b)\big).$$
Therefore, $Gr(2,4)$ can be identified with the product $\mathbb{S}^2_{+}\times \mathbb{S}^2_{-}$, where $\mathbb{S}^2_{\pm}$
is the sphere of radius $1/{\sqrt 2}$ in $\Lambda^2_{\pm}\R^4$, centered at the origin.

Let $f\colon M\to \R^4$ be a non-minimal surface, with mean
curvature vector field $H$ and Gauss map 
$g=(g_{+},g_{-})\colon M\to \mathbb{S}^2_{+}\times \mathbb{S}^2_{-}$.
In terms of a local complex coordinate $z$ away from the zeros of $H$, the components of the Gauss map are given by
\begin{eqnarray}
g_{\pm}=-\frac{i}{\lambda^2}f_*\dz\wedge f_*\dzb\mp\frac{i}{\|H\|^2}H^-\wedge H^+, \label{cgm}
\end{eqnarray}
where $\lambda$ is the conformal factor. The differential $\Psi^{\pm}$ is written as
\be \label{dpsi}
\Psi^{\pm}=\psi^{\pm}dz^2,\;\; \mbox{where}\;\; \psi^{\pm}=\langle \phi^{\pm},H^{\mp}\rangle
\ee
and $\phi^{\pm}$ is given by \eqref{sf}.
The Gauss and Weingarten formulas become respectively,
\begin{eqnarray}\label{CGauss}
\widetilde{\nabla}_{\dz}f_*\dz&=&(\log\lambda^2)_{z}f_*\dz +\frac{2\psi^-}{\|H\|^2} H^-
+\frac{2\psi^+}{\|H\|^2} H^+, \label{Gzz}\\
\widetilde{\nabla}_{\dz}f_*\dzb&=&\frac{\lambda^2}{2}(H^-+H^+),\label{Gzzb}\\
\widetilde{\nabla}_{\dz}H^{\pm}&=&-\frac{\|H\|^2}{2}f_*\dz - \frac{2\psi^{\mp}}{\lambda^2}f_*\dzb
+ \frac{2\langle \nap_{\dz}H^{\pm},H^{\mp}\rangle}{\|H\|^2}H^{\pm}, \label{Wp}
\end{eqnarray}
where $\widetilde{\nabla}$ is the induced connection on the induced bundle $f^*T\R^4$.

\begin{lemma}\label{eigenfunctions}
Let $f\colon M\to \R^4$ be a non-minimal surface. If the component
$g_{\pm}$ of the Gauss map of $f$ is harmonic, then its height functions in $\Lambda^2 _{\pm}\R^4$
are eigenfunctions of the elliptic operator $\Delta +2\left( 2\|H\|^2 -K \mp K_N\right)$, corresponding to the zero eigenvalue.
\end{lemma}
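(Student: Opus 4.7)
\emph{Plan.} My plan is to exploit the sphere-valued harmonic map identity. Since $g_{\pm}$ takes values in the $2$-sphere $\mathbb{S}^2_\pm\subset \Lambda^2_\pm\R^4$ of radius $r=1/\sqrt 2$, the standard characterization---the tension field is the tangential component of the ambient Laplacian, and the normal to the sphere at $g_\pm$ is the radial direction---yields that $g_\pm$ is harmonic if and only if
$$\Delta g_\pm + 2\|dg_\pm\|^2\, g_\pm = 0$$
componentwise in $\Lambda^2_\pm\R^4$. Pairing with any fixed $V\in\Lambda^2_\pm\R^4$ shows that the height function $\<\< g_\pm,V\>\>$ satisfies $\Delta \<\< g_\pm,V\>\> + 2\|dg_\pm\|^2\<\< g_\pm,V\>\> = 0$. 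Thus the lemma reduces to the identity $\|dg_\pm\|^2 = 2\|H\|^2 - K\mp K_N$.

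To compute the energy density, I would differentiate (\ref{cgm}) directly using (\ref{Gzz})--(\ref{Wp}). A careful expansion reveals that the non-zero $H^\mp$-component of $\widetilde\nabla_{\dz} H^\mp$---which is forced by the isotropy relation $\<H^-,H^+\>=\|H\|^2/2$ via $\<\nap_{\dz}H^-,H^+\>+\<\nap_{\dz}H^+,H^-\>=(\|H\|^2)_z/2$---cancels exactly the contribution from $\dz(\|H\|^{-2})$ coming from differentiating the $H^-\wedge H^+$ piece of (\ref{cgm}), leading after regrouping of the $F\wedge H^\pm$ and $H^\pm\wedge \bar F$ terms to the compact form
$$\dz g_\pm = -i\,f_*\dz\wedge H^\mp - \frac{4i\psi^\pm}{\lambda^2\|H\|^2}\,H^\pm\wedge f_*\dzb.$$
Since $g_\pm$ is real-valued in $\Lambda^2_\pm\R^4$, one has $\|dg_\pm\|^2 = (4/\lambda^2)\<\< \dz g_\pm, \dzb g_\pm\>\>$, and applying the Pl\"ucker formula $\<\< a\wedge b,c\wedge d\>\> = \<a,c\>\<b,d\>-\<a,d\>\<b,c\>$ together with the isotropies $\<f_*\dz,f_*\dz\>=\<H^\pm,H^\pm\>=\<f_*\dz,H^\pm\>=0$ and the non-vanishing pairings $\<f_*\dz,f_*\dzb\>=\lambda^2/2$, $\<H^-,H^+\>=\|H\|^2/2$, exactly two cross products survive and yield
$$\|dg_\pm\|^2 = \|H\|^2 + \frac{16|\psi^\pm|^2}{\lambda^4\|H\|^2}.$$
Substituting the identity (\ref{av}) with $c=0$ produces precisely $2\|H\|^2-K\mp K_N$. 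The computation is carried out where $H\ne 0$ and extends smoothly to the whole of $M$.

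The main obstacle is the bookkeeping in deriving the compact form of $\dz g_\pm$: one must identify and track the non-vanishing $H^\mp$-component of $\widetilde\nabla_{\dz} H^\mp$ (which persists even when $\Phi^\pm$ is holomorphic, since it originates from the non-constancy of $\|H\|^2$), verify its cancellation against $\dz(\|H\|^{-2})$, and keep the $\pm$-cases aligned without sign errors. Notably, harmonicity of $g_\pm$ is not invoked in computing $\|dg_\pm\|^2$; it enters the argument only through the sphere-valued map identity at the outset, converting the energy-density formula into the claimed eigenvalue equation.
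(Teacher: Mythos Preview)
Your proof is correct and follows essentially the same route as the paper: both derive the formula $(g_\pm)_z = \frac{4i\psi^\pm}{\lambda^2\|H\|^2}f_*\dzb\wedge H^\pm - i f_*\dz\wedge H^\mp$ from (\ref{cgm}) via (\ref{Gzz})--(\ref{Wp}) and then invoke the sphere-valued harmonic map framework. The only cosmetic difference is that the paper phrases the last step as computing the normal component of $(g_\pm)_{z\bar z}$, whereas you compute the energy density $\|dg_\pm\|^2$ directly from $(g_\pm)_z$ and apply the identity $\Delta g_\pm + 2\|dg_\pm\|^2 g_\pm = 0$; these are equivalent via $((g_\pm)_{z\bar z})^\perp = -2\langle\langle (g_\pm)_z,(g_\pm)_{\bar z}\rangle\rangle\, g_\pm$, so your version saves one differentiation but is otherwise the same argument.
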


\begin{proof}
Let $z$ be a local complex coordinate away from the isolated zeros of $H$ (see Proposition \ref{prop}(ii)).
By using (\ref{Gzz})-(\ref{Wp}), equation (\ref{cgm}) yields
\begin{eqnarray}
(g_{\pm})_z=\frac{4i\psi^{\pm}}{\lambda^2 \|H\|^2}f_*\dzb\wedge H^{\pm} - if_*\dz\wedge H^{\mp}. \label{gmz}
\end{eqnarray}
Differentiating (\ref{gmz}) with respect to $\bar{z}$, we obtain that the normal component of $(g_{\pm})_{z\bar{z}}$
with respect to $\mathbb{S}^2_{\pm}$ is given by 
\begin{eqnarray*}
\left((g_{\pm})_{z\bar{z}}\right)^{\perp}=-\frac{\lambda^2}{2}\left(2\|H\|^2-K\mp K_N \right)g_{\pm}.\label{gmn}
\end{eqnarray*}
For an arbitrary vector $v_{\pm}\in\Lambda^2 _{\pm}\R^4$ we have
\begin{eqnarray*}
\Delta\langle\langle g_{\pm},v_{\pm}\rangle\rangle = \langle\langle \tau(g_{\pm})+
\frac{4}{\lambda^2}\left((g_{\pm})_{z\bar{z}}\right)^{\perp},v_{\pm}\rangle\rangle, \label{lgm}
\end{eqnarray*}
where $\tau(g_{\pm})$ is the tension field of $g_{\pm}$. 
The result follows from the above and the harmonicity of $g_{\pm}$.
\qed
\end{proof}

\begin{lemma}\label{sums}
Let $f\colon M\to\R^4$ be a surface, which is neither minimal nor superconformal.
Assume that $g_{\pm}$ is harmonic and that there exist surfaces
$f_{j}\in \mathcal{M}^{\pm}(f)$ with $\tilde{f}^{\pm}_{\th_j}=f_j\circ \tilde\pi$, and vectors
$v_{\pm}^j \in \Lambda^2_{\pm}\R^4\smallsetminus\{0\}$, $j=1,\dots,n$, such that
the Gauss maps $g^j=(g^j_{+},g^j_{-})$ of $f_j$ satisfy
\be \label{sm}
\sum_{j=1}^{n}\langle\langle g^j_{\pm}, v_{\pm}^j\rangle\rangle=0.
\ee
Then:
\begin{enumerate}[topsep=0pt,itemsep=-1pt,partopsep=1ex,parsep=0.5ex,leftmargin=*, label=(\roman*), align=left, labelsep=-0.4em]
\item The differential $\mathcal{U}^{\pm}=u^{\pm}dz$ is holomorphic, where
\be \label{up}
u^{\pm}=\sum_{j=1}^{n}\langle\langle f_{j*}\dz\wedge H^{\mp}_j, v_{\pm}^j\rangle\rangle
\ee
and $H_j$ is the mean curvature vector field of $f_j$.
\item If $\mathcal{U}^{\pm}\equiv0$, then
\be \label{esm}
\sum_{j=1}^{n}e^{i\th_j}\langle\langle g^j_{\pm}, v_{\pm}^j\rangle\rangle=0.
\ee
\end{enumerate}
\end{lemma}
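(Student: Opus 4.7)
For part (i), the plan is to compute $(u^{\pm})_{\bar z}$ directly and show it equals a multiple of the left-hand side of (\ref{sm}), hence vanishes. Applying $\partial_{\bar z}$ by Leibniz to each product $f_{j*}\dz\wedge H^{\mp}_{j}$ and using $(f_{j*}\dz)_{\bar z}=\widetilde{\nabla}_{\dz}f_{j*}\dzb=(\lambda^{2}/2)H_{j}$ from (\ref{Gzzb}) together with the Weingarten formula (\ref{Wp}) for $(H^{\mp}_{j})_{\bar z}$, the computation simplifies dramatically under our vertical-harmonicity assumption. Indeed, since $f_{j}=f^{\pm}_{\theta_{j}}$ lies in the associated family of a surface with $G_{\pm}$ vertically harmonic, Proposition \ref{glphi} gives $\nap_{\bar z}H^{\mp}_{j}\equiv 0$, so the normal part of $(H^{\mp}_{j})_{\bar z}$ drops out; moreover, (\ref{csff}) forces $\psi^{\mp}_{j}=\psi^{\mp}$ to be independent of $j$. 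Regrouping the resulting two pieces via the identity $i\lambda^{2}g^{j}_{\pm}=f_{j*}\dz\wedge f_{j*}\dzb\pm(\lambda^{2}/\|H\|^{2})H^{-}_{j}\wedge H^{+}_{j}$, which is immediate from (\ref{cgm}), produces $(f_{j*}\dz\wedge H^{\mp}_{j})_{\bar z}=-(i\lambda^{2}\|H\|^{2}/2)g^{j}_{\pm}$. Summing against $v^{j}_{\pm}$ and invoking (\ref{sm}) yields $(u^{\pm})_{\bar z}\equiv 0$ on $M\smallsetminus\{H=0\}$, and smoothness of $u^{\pm}$ at the isolated zeros of $H$ extends holomorphicity to all of $M$.

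For part (ii), the plan is to bootstrap (i) to a weighted version of $u^{\pm}$. Taking the complex conjugate of $u^{\pm}\equiv 0$ gives $\sum_{j}\langle\langle f_{j*}\dzb\wedge H^{\pm}_{j},v^{j}_{\pm}\rangle\rangle\equiv 0$. On the other hand, differentiating (\ref{sm}) in $\bar z$ and inserting the conjugate of (\ref{gmz}) for $(g^{j}_{\pm})_{\bar z}$---whose weights $\overline{\psi^{\pm}_{j}}$ inherit the factor $e^{i\theta_{j}}\overline{\psi^{\pm}}$ from the associated-family identity (\ref{csff})---produces the linear relation
$$\sum_{j}\langle\langle f_{j*}\dzb\wedge H^{\pm}_{j},v^{j}_{\pm}\rangle\rangle\;=\;\frac{4\overline{\psi^{\pm}}}{\lambda^{2}\|H\|^{2}}\,\hat{u}^{\pm},\qquad\hat{u}^{\pm}:=\sum_{j}e^{i\theta_{j}}\langle\langle f_{j*}\dz\wedge H^{\mp}_{j},v^{j}_{\pm}\rangle\rangle.$$
Combining these two identities forces $\overline{\psi^{\pm}}\,\hat{u}^{\pm}\equiv 0$, so $\hat{u}^{\pm}$ vanishes on the dense open set $\{\psi^{\pm}\neq 0\}$ and, by continuity, on all of $M$. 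Running the computation of part (i) verbatim on $\hat{u}^{\pm}$---with the constants $e^{i\theta_{j}}$ riding along unchanged---then yields $0=(\hat{u}^{\pm})_{\bar z}=-(i\lambda^{2}\|H\|^{2}/2)\sum_{j}e^{i\theta_{j}}\langle\langle g^{j}_{\pm},v^{j}_{\pm}\rangle\rangle$, which is (\ref{esm}).

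The only non-algebraic subtlety lies in guaranteeing $\Psi^{\pm}\not\equiv 0$, which legitimizes dividing out $\overline{\psi^{\pm}}$ on a dense set. By Proposition \ref{prop}(i), $Z(\Psi^{\pm})=M_{0}^{\pm}\cup\{H=0\}$; if this covered $M$ then, since $\{H=0\}$ is at most isolated for non-minimal $f$ (Proposition \ref{prop}(ii)), the pseudo-umbilic set $M_{0}^{\pm}$ would be dense, forcing $f$ to be superconformal by continuity---contradicting the standing hypothesis.
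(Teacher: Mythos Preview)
Your proof of part (i) is correct and matches the paper's approach: both compute $u^{\pm}_{\bar z}$ directly via (\ref{Gzzb}), (\ref{Wp}), the anti-holomorphicity $\nap_{\dzb}H^{\mp}_j=0$, and (\ref{cgm}), arriving at exactly the paper's equation (\ref{uU}).

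For part (ii), your route is genuinely different from the paper's and more economical for this lemma. The paper differentiates both (\ref{up}) and (\ref{up2}) with respect to $z$ to obtain two separate expressions (\ref{upz1}), (\ref{upz2}) for $u^{\pm}_z$; subtracting these yields the identity (\ref{sup}), from which (\ref{esm}) follows by setting $u^{\pm}=0$. You instead observe that the conjugate of (\ref{up2}) expresses $\overline{u^{\pm}}$ as $\dfrac{4\overline{\psi^{\pm}}}{\lambda^2\|H\|^2}$ times a weighted version $\hat{u}^{\pm}$ of $u^{\pm}$; hence $u^{\pm}\equiv0$ forces $\hat{u}^{\pm}\equiv0$ on the dense set $\{\psi^{\pm}\neq0\}$, and then the computation from part~(i) applied to $\hat{u}^{\pm}$ gives the weighted Gauss-map relation. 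This is cleaner, but be aware that the paper's longer derivation is not wasted: the intermediate identities (\ref{upz1}), (\ref{upz2}), (\ref{sup}) are reused verbatim in the proof of Theorem~\ref{M2}(i), where one must handle the case $\mathcal{U}^{\pm}\not\equiv0$. Your shortcut proves the lemma but does not supply those formulas.

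One minor correction: the weight you obtain from $\overline{\psi^{\pm}_j}=\overline{e^{\mp i\th_j}\psi^{\pm}}$ is $e^{\pm i\th_j}$, not $e^{i\th_j}$ uniformly. Thus your $\hat{u}^{\pm}$ should carry $e^{\pm i\th_j}$, and the conclusion you reach directly is $\sum_j e^{\pm i\th_j}\langle\langle g^j_{\pm},v^j_{\pm}\rangle\rangle=0$. Since the height functions are real, conjugation immediately gives the other sign and hence (\ref{esm}) as stated; but the exponent should be tracked correctly through the argument.
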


\begin{proof}
From (\ref{csff}) and since by definition $\Psi^{\pm}_{f_j}=\langle \Phi^{\pm}_{f_j},H^{\mp}_j\rangle$, we have that
$$\Psi^{\pm}_{f_j}=e^{\mp i\th_j}\Psi^{\pm},\;\; j=1,\dots,n.$$
Let $(U,z)$ be a complex chart. On $U\smallsetminus Z(\Psi^{\pm})$, (\ref{gmz}) yields
\be\label{gthjz}
(g^j_{\pm})_z=e^{\mp i\th_j}\frac{4i\psi^{\pm}}{\lambda^2 \|H\|^2}f_{j*}\dzb\wedge 
H^{\pm}_j-if_{j*}\dz\wedge H^{\mp}_j,\;\;
j=1,\dots,n.
\ee
Differentiating (\ref{sm}) with respect to $z$ and using (\ref{gthjz}), we find that
\be \label{up2}
u^{\pm}=\frac{4\psi^{\pm}}{\lambda^2\|H\|^2}\sum_{j=1}^{n}e^{\mp i\th_j}\langle\langle f_{j*}\dzb\wedge H^{\pm}_j, v_{\pm}^j\rangle\rangle\;\;
\mbox{on}\;\; U\smallsetminus Z(\Psi^{\pm}).
\ee
From Proposition \ref{glphi} it follows that $H^{\pm}_j$ is an anti-holomorphic section. Hence,
\be \label{anhol}
\nap_{\dz}H^{\pm}_j=0\;\;\; \mbox{and}\;\;\;
(\|H\|^2)_{z}=2\langle \nap_{\dz}H^{\mp}_j,H^{\pm}_j\rangle,\;\; j=1,\dots,n.
\ee
Differentiating (\ref{up2}) with respect to $z$, and using (\ref{Gzzb}), (\ref{Wp}), (\ref{anhol}), (\ref{cgm}) and (\ref{up2}), we obtain that
\be \label{upz1}
u^{\pm}_{z}=u^{\pm}\left(\log\frac{\psi^{\pm}}{\lambda^2\|H\|^2}\right)_{z} +2i\psi^{\pm}
\sum_{j=1}^{n}e^{\mp i\th_j}\langle\langle g^j_{\pm}, v_{\pm}^j\rangle\rangle\;\; \mbox{on}\;\; U\smallsetminus Z(\Psi^{\pm}).
\ee
On the other hand, differentiating (\ref{up}) with respect to $z$, and using (\ref{Gzz}), (\ref{Wp}), (\ref{anhol}), (\ref{cgm}) and (\ref{up}), we find that
\be \label{upz2}
u^{\pm}_{z}=u^{\pm}\left(\log\left(\lambda^2\|H\|^2\right)\right)_{z} - 2i\psi^{\pm}
\sum_{j=1}^{n}e^{\mp i\th_j}\langle\langle g^j_{\pm}, v_{\pm}^j\rangle\rangle\;\; \mbox{on}\;\; U\smallsetminus Z(\Psi^{\pm}).
\ee

(i) From (\ref{Gzzb}), (\ref{Wp}), (\ref{anhol}) and (\ref{cgm}), we have that
\be \label{uU}
u^{\pm}_{\bar z}=\frac{\lambda^2\|H\|^2}{2i}\sum_{j=1}^{n}\langle\langle g^j_{\pm}, v_{\pm}^j\rangle\rangle\;\; \mbox{on}\;\; U
\ee
and the claim follows from (\ref{sm}).

(ii) Using (\ref{upz1}) and (\ref{upz2}), we obtain that
\be \label{sup}
\sum_{j=1}^{n}e^{\mp i\th_j}\langle\langle g^j_{\pm}, v_{\pm}^j\rangle\rangle=\frac{iu^{\pm}}{4\psi^{\pm}}
\left(\log\frac{\psi^{\pm}}{\lambda^4\|H\|^4}\right)_{z}\;\; \mbox{on}\;\; U\smallsetminus Z(\Psi^{\pm})
\ee
and (\ref{esm}) follows from (\ref{sup}).
\qed
\end{proof}

\begin{theorem}\label{M2}
Let $f\colon M\to \R^4$ be a non-superconformal isometric immersion of a compact, oriented 2-dimensional Riemannian manifold, 
with mean curvature vector field $H$ and Gauss map $g=(g_{+},g_{-})\colon M\to \mathbb{S}^2_{+}\times \mathbb{S}^2_{-}$.
\begin{enumerate}[topsep=0pt,itemsep=-1pt,partopsep=1ex,parsep=0.5ex,leftmargin=*, label=(\roman*), align=left, labelsep=-0.4em]
\item If $g_{\pm}$ is harmonic and $\mathcal{\chi}\neq \mp \mathcal{\chi}_N$, then $\mathcal{M}^{\pm}(f)$ is a finite set.
\item If $H$ is parallel and $\mathcal{\chi}\neq0$, then  $\mathcal{M}(f)$ is a finite set.
\end{enumerate}
\end{theorem}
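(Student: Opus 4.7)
The plan is to prove each part by contradiction, combining the finite-dimensionality of the eigenspace from Lemma~\ref{eigenfunctions} with Lemma~\ref{sums}.

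For (i), suppose $\mathcal{M}^{\pm}(f)$ is infinite. Proposition~\ref{fins1}(i) then forces $\{f\}\cup\mathcal{M}^{\pm}(f)=\mathbb{S}^1$, giving the full associated family $\{f^{\pm}_{\th}\}_{\th\in\mathbb{S}^1}$. Each $f^{\pm}_{\th}$ is isometric to $f$, has the same $\|H\|^2$, and by Lemma~\ref{qke}(ii) the same normal curvature; its $\pm$-Hopf differential is $e^{\mp i\th}T_{\th}\Phi^{\pm}$, still holomorphic, so by Proposition~\ref{glphi} and Remark~\ref{Eucl} the Gauss map component $g^{\th}_{\pm}$ is harmonic for every $\th$. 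Lemma~\ref{eigenfunctions} places all height functions $\langle\langle g^{\th}_{\pm},v\rangle\rangle$, $v\in\Lambda^2_{\pm}\R^4$, in the finite-dimensional kernel $V=\ker L$ of $L=\Delta+2(2\|H\|^2-K\mp K_N)$. Picking $n>\dim V$ distinct parameters $\th_1,\dots,\th_n$ and a basis of $\Lambda^2_{\pm}\R^4$, the resulting $3n$ functions in $V$ must be linearly dependent; this yields non-zero $v^1,\dots,v^n$ realizing (\ref{sm}), and Lemma~\ref{sums}(i) produces a holomorphic $1$-differential $\mathcal{U}^{\pm}=u^{\pm}dz$.

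If $\mathcal{U}^{\pm}\equiv 0$, Lemma~\ref{sums}(ii) produces a new relation with coefficients $e^{\mp i\th_j}$; iterating with coefficients $e^{\mp ik\th_j}v^j$ gives analogous identities for every $k\geq 0$ unless at some step a non-trivial $\mathcal{U}^{\pm}$ is produced. Since the $\th_j$ are distinct, the Vandermonde matrix $(e^{\mp ik\th_j})$ is invertible after $n$ iterations, so $\langle\langle g^{\th_j}_{\pm},v^j\rangle\rangle\equiv 0$ on $M$ for every $j$; picking $j_0$ with $v^{j_0}\neq 0$, the image of $g^{\th_{j_0}}_{\pm}$ lies in a great circle of $\mathbb{S}^2_{\pm}$, and varying the choice of parameters forces this for $g_{\pm}$ itself, contradicting the non-superconformality of $f$. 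Hence $\mathcal{U}^{\pm}\not\equiv 0$, so $N(\mathcal{U}^{\pm})=-\chi$. Using formula (\ref{up}), each summand vanishes at every zero of $H$ to order at least the multiplicity of $H$, contributing at least $\pm\chi_N$ by Proposition~\ref{charact}(i); formula (\ref{up2}) shows that at every $p\in M_0^{\pm}\setminus\{H=0\}$ the order of $u^{\pm}$ is at least $\mathrm{ord}_p(\phi^{\pm})$, contributing at least $N(\Phi^{\pm})=-2\chi\mp\chi_N$ by Proposition~\ref{charact}(ii). A careful accounting of the overlap $M_0^{\pm}\cap\{H=0\}$ combines these into an inequality that, at equality, forces $\chi=\mp\chi_N$, contradicting the hypothesis.

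For (ii), $H$ parallel makes $\|H\|^2$ constant, so by Proposition~\ref{prop}(iii) the normal curvature $K_N$ vanishes identically and $\chi_N=0$. The hypothesis $\chi\neq 0$ then reads $\chi\neq\mp\chi_N$ for both signs, and part (i) yields that $\mathcal{M}^+(f)$ and $\mathcal{M}^-(f)$ are both finite. By Proposition~\ref{fins1}(ii), $\mathcal{M}(f)$ is either the whole torus $\mathbb{S}^1\times\mathbb{S}^1$ (excluded, since the coordinate circles $\{f\}\cup\mathcal{M}^{\pm}(f)$ would then be infinite) or a union $\mathcal{V}_0\cup\mathcal{V}_1$ of finite and $1$-dimensional real-analytic subvarieties. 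A real-analytic $1$-parameter family through $f$ must be transverse to both coordinate circles (else it would violate finiteness of $\mathcal{M}^{\pm}(f)$) and hence lies in $\mathcal{M}^*(f)$ near $f$; since parallel bundle isometries preserve $\nabla^{\perp}H$, every surface in the family has parallel mean curvature. Applying a two-parameter analogue of Lemma~\ref{sums} to one of the Gauss components (using $\Psi^-_{f_t}=e^{i\th(t)}\Psi^-$ and $\Psi^+_{f_t}=e^{-i\varphi(t)}\Psi^+$ from the proof of Proposition~\ref{CMC}(iii)) produces a non-trivial holomorphic $1$-form whose zero count once again forces $\chi=0$, a contradiction.

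The principal obstacle is the zero-counting step in (i): controlling the order of $u^{\pm}$ at the points of $M_0^{\pm}\cap\{H=0\}$, where (\ref{up}) and (\ref{up2}) give two overlapping lower bounds that must be combined without double counting. A secondary obstacle, for (ii), is the extension of Lemma~\ref{sums} to surfaces in $\mathcal{M}^*(f)$, which requires a two-parameter version of the expansion (\ref{gthjz}) in both $\th(t)$ and $\varphi(t)$.
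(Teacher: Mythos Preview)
Your overall architecture matches the paper's---force a linear relation among height functions via Lemma~\ref{eigenfunctions}, then derive a contradiction from Lemma~\ref{sums}---but the case $\mathcal{U}^{\pm}\not\equiv 0$ has a genuine gap. The lower bounds you extract from (\ref{up}) and (\ref{up2}) only yield an inequality of the form $-\chi=N(\mathcal{U}^{\pm})\geq \pm\chi_N$, which is \emph{not} incompatible with $\chi\neq\mp\chi_N$. What you are missing is the key identity the paper derives by equating (\ref{upz1}) and (\ref{upz2}): this gives $\big(\log(\psi^{\pm}/(u^{\pm})^2)\big)_z=0$, and together with holomorphicity also $\big(\log(\psi^{\pm}/(u^{\pm})^2)\big)_{\bar z}=0$, hence the \emph{exact} relation $\psi^{\pm}=c\,(u^{\pm})^2$ for a global nonzero constant~$c$. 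This forces $Z(\Psi^{\pm})=Z(\mathcal{U}^{\pm})$ with matched orders, and then a pole/boundedness argument applied to (\ref{sup}) at each common zero shows that the multiplicities of $\|H\|^2$ and of $\|H\|^2-(K-c)\mp K_N$ coincide there, whence Proposition~\ref{charact} gives $\chi=\mp\chi_N$ exactly. Your ``careful accounting of the overlap'' cannot substitute for this, because without $\psi^{\pm}=c\,(u^{\pm})^2$ there is no reason the zeros of $u^{\pm}$ are confined to $Z(\Psi^{\pm})$ at all.

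Two smaller points. In your $\mathcal{U}^{\pm}\equiv 0$ branch, once some $g^{\th_{j_0}}_{\pm}$ lands in a great circle the correct contradiction is $K=\mp K_N$ (via the Jacobian identities $K=\mathcal{J}_{g_+}+\mathcal{J}_{g_-}$, $K_N=\mathcal{J}_{g_+}-\mathcal{J}_{g_-}$), hence $\chi=\mp\chi_N$; this has nothing to do with superconformality, and ``varying the choice of parameters'' is unnecessary. For part~(ii) the paper avoids any two-parameter extension of Lemma~\ref{sums}: since $\Psi^{-}_{f_{\th,\varphi}}=e^{i\th}\Psi^{-}$ depends only on $\th$, one simply picks $(\th_j,\varphi_j)$ with the $\th_j$ distinct in a half-circle and reruns the argument of~(i) verbatim for the minus sign. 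Your route through Proposition~\ref{fins1}(ii) and an undeveloped analogue of Lemma~\ref{sums} for $\mathcal{M}^*(f)$ is both longer and incomplete.
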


\begin{proof}
(i) Suppose that $\mathcal{M}^{\pm}(f)$ is infinite and consider surfaces
$f_j\in \mathcal{M}^{\pm}(f)$ such that $\tilde{f}^{\pm}_{\th_j}=f_j\circ \tilde\pi$, $j=1,\dots,n$,
with $0<\th_1<\dots<\th_n<\pi$ or $\pi<\th_1<\dots<\th_n<2\pi$.
We prove that the height functions of the $\Lambda^2_{\pm}\R^4$-component 
of the Gauss maps of $f_j$ are linearly independent.
Suppose to the contrary that (\ref{sm}) holds for vectors
$v_{\pm}^j \in \Lambda^2_{\pm}\R^4\smallsetminus\{0\}$, $j=1,\dots,n$. 

We claim that $\mathcal{U}^{\pm}\equiv0$. Arguing indirectly, assume that $\mathcal{U}^{\pm}\not\equiv0$. 
From Lemmas \ref{zeros} and \ref{sums}(i), it follows that its zero-set
$Z(\mathcal{U}^{\pm})$ is isolated. 
Let $z$ be a complex coordinate in a connected
neighbourhood $U\subset M\setminus \left(Z(\Psi^{\pm})\cup Z(\mathcal{U}^{\pm})\right)$. 
From (\ref{upz1}) and (\ref{upz2}), we obtain
$$\left(\log\frac{\psi^{\pm}}{(u^{\pm})^2}\right)_{z}=0.$$
Using Proposition \ref{prop}(i) and Lemma \ref{sums}(i), we have
$$\left(\log\frac{\psi^{\pm}}{(u^{\pm})^2}\right)_{\bar z}=0.$$
Therefore, 
\be \label{c}
\psi^{\pm}=c(u^{\pm})^2 
\ee
on $U$, for a non-zero constant $c\in \mathbb{C}$.
It is easy to see that $c$ is independent of the complex coordinate and thus, $\Psi^{\pm}= c\; \mathcal{U}^{\pm}\otimes \mathcal{U}^{\pm}$ on $M$.
We argue that $Z(\Psi^{\pm})=Z(\mathcal{U}^{\pm})\neq \emptyset$.
Indeed, if $Z(\Psi^{\pm})=\emptyset$, then the holomorphic differential $\Psi^{\pm}$ is everywhere nonvanishing 
and by the Riemann-Roch theorem we obtain that $\mathcal{\chi}=0$.
On the other hand, Proposition \ref{prop}(i) implies that $H$ is everywhere nonvanishing 
and Proposition \ref{charact}(i) gives $\mathcal{\chi}_N=0$.
This contradicts our assumption.
Let $Z(\Psi^{\pm})=\{p_1,\dots,p_k\}$ and consider a complex chart $(U,z)$ around $p_r, r=1,\dots,k$, with $z(p_r)=0$.
Since $\mathcal{U}^{\pm}=u^{\pm}dz$ is holomorphic, there exists a positive integer $m_r$ such that around $p_r$ we have
\be \label{uh}
u^{\pm}=z^{m_r}\hat{u},\;\; \mbox{where}\;\; \hat{u}\;\; \mbox{is holomorphic with}\;\; \hat{u}(0)\neq0.
\ee
Hence, from (\ref{c}) we have that
$|\psi^{\pm}|^2=|z|^{4m_r}|c|^2|\hat{u}|^4,$
or equivalently, bearing in mind (\ref{av}) and \eqref{dpsi}
$$\|H\|^2(\|H\|^2-K\mp K_N)=|z|^{4m_r}u_1,\;\;\mbox{where}\;\; u_1\;\; \mbox{is smooth and positive}.$$
Proposition \ref{prop}(ii) implies that there exist non-negative integers $l_r, s_r$ such that
$$\|H\|^2=|z|^{2l_r}u_2\;\;\; \mbox{and}\;\;\; \|H\|^2-K\mp K_N=|z|^{2s_r}u_3,$$
where $u_2, u_3$ are smooth and positive.
It is clear that $s_r=2m_r-l_r$.
From (\ref{sup}), by using (\ref{c}), (\ref{uh}) and the above, on $U\smallsetminus Z(\Psi^{\pm})$ we have that
$$\sum_{j=1}^{n}e^{\mp i\th_j}\langle\langle g^j_{\pm}, v_{\pm}^j\rangle\rangle=\frac{i\lambda^2\|H\|^2}{2c(u^{\pm})^2}\left(\frac{u^{\pm}}{\lambda^2\|H\|^2}\right)_{z}=\frac{i\lambda^2 z^{l_r}{\bar z}^{l_r}u_2}{2cz^{2m_r}{\hat u}^2}\left(\frac{z^{m_r}{\hat u}}{\lambda^2z^{l_r}{\bar z}^{l_r}u_2}\right)_{z},$$
or equivalently
$$\sum_{j=1}^{n}e^{\mp i\th_j}\langle\langle g^j_{\pm}, v_{\pm}^j\rangle\rangle=
\frac{1}{z^{m_r+1}}\frac{i\lambda^2u_2}{2c{\hat u}^2}\left((m_r-l_r)\frac{\hat{u}}{\lambda^2 u_2}+z\big(\frac{\hat u}{\lambda^2 u_2}\big)_{z}\right).$$
If $m_r\neq l_r$ for some $r=1,\dots,k$, then the right-hand side of the above has a pole at $z=0$, whereas the left-hand side is bounded.
Hence, $m_r=l_r=s_r$ for any $r=1,\dots,k$. Then, Proposition \ref{charact} implies that
$\mathcal{\chi}=\mp \mathcal{\chi}_N$, which is a contradiction. Therefore, $\mathcal{U}^{\pm}\equiv0$ and this proves the claim.

According to Lemma \ref{sums}(ii), (\ref{esm}) is valid, or equivalently
$$\sum_{j=1}^{n}\cos\th_j\langle\langle g^j_{\pm}, v_{\pm}^j\rangle\rangle=0\;\;\;\; \mbox{and}\;\;\;\;
\sum_{j=1}^{n}\sin\th_j\langle\langle g^j_{\pm}, v_{\pm}^j\rangle\rangle=0.$$
Eliminating $\langle\langle g^n_{\pm}, v_{\pm}^n\rangle\rangle$, we obtain
$$\sum_{j=1}^{n-1}\langle\langle g^j_{\pm}, w_{\pm}^j\rangle\rangle=0,$$
where $w_{\pm}^j=\sin(\th_n-\th_j)v_{\pm}^j\neq 0$, $j=1,\dots,n-1$.
By induction, we finally find that $\langle\langle g^n_{\pm}, w_{\pm}\rangle\rangle=0$ 
for some non-zero vector $w_{\pm}\in \Lambda^2 _{\pm}\R^4$.
Therefore, $g^n_{\pm}$ takes values in a great circle of $\mathbb{S}^2_{\pm}$ and thus,
its Jacobian $\mathcal{J}_{g^n_{\pm}}$ vanishes. 
On the other hand, we know that (cf. \cite[Proposition 4.5.]{HO})
\begin{equation*} \label{Jacobians}
K=\mathcal{J}_{g^n_{+}}+\mathcal{J}_{g^n_{-}}\;\;\;\mbox{and}\;\;\; K_N=\mathcal{J}_{g^n_{+}}-\mathcal{J}_{g^n_{-}}.
\end{equation*}
Hence, we conclude that $K= \mp K_N$, which contradicts our topological assumption.
Therefore, we have proved that the height functions of the $\Lambda^2 _{\pm}\R^4$-component 
of the Gauss maps of $f_j$ are linearly independent.
This contradicts Lemma \ref{eigenfunctions}, since the eigenspaces of an elliptic operator are finite dimensional.
Hence, $\mathcal{M}^{\pm}(f)$ is a finite set.

(ii) Assume that $\mathcal{M}(f)$ is infinite. Then there exists a sequence $f_k\in \mathcal{M}(f)$ 
such that $\tilde{f}_{\th_k,\varphi_k}=f_k\circ \tilde\pi$, for which $(\th_l,\varphi_l)\neq(\th_m,\varphi_m)$ for $l\neq m$.
Without loss of generality, we may assume that either
$0<\th_l<\th_m<\pi$, or $\pi<\th_l<\th_m<2\pi$, for $l,m\in \mathbb{N}$ with $l<m$. 
We prove that the height functions of the $\Lambda^2 _{-}\R^4$-component
of the Gauss maps of $f_j$, $j=1,\dots,n$, are linearly independent.
Suppose to the contrary that (\ref{sm}) holds for vectors $v_{-}^j \in \Lambda^2_{-}\R^4\smallsetminus \{0\}$, $j=1,\dots,n$.
From (\ref{ddcmc}) it follows that
$\Psi^{-}_{f_j}=e^{i\th_j}\Psi^{-}.$
Consequently, the relations (\ref{gthjz})-(\ref{sup}) are valid and thus, the conclusion of Lemma \ref{sums} also holds. 
Taking into account that $K_N=0$, we can prove as in the proof of part (i)
that our topological assumption implies $\mathcal{U}^{-}\equiv0$.
The remaining of the proof is the same with the one of part (i).
\qed
\end{proof}
\bigskip

\noindent{\emph{Proof of Theorem \ref{verha}:}}
(i) It follows immediately from Theorem \ref{superconformal}.

(ii) From Proposition \ref{fins1}(i) we know that $\{f\}\cup\mathcal{M}^{\pm}(f)$ is either finite, or the circle $\mathbb{S}^1$.
We show that the same holds true for the set $\mathcal{M}^*(f)\cup\mathcal{M}^{\mp}(f)$.

Suppose that $\mathcal{M}^*(f)\cup\mathcal{M}^{\mp}(f)$ is infinite.
Since $G_{\mp}$ is not vertically harmonic, from Theorem \ref{basic}(i) it follows that
$\mathcal{M}^{\mp}(f)$ contains at most one congruence class and thus,
$\mathcal{M}^*(f)$ is infinite. For $\tilde{f}\in \mathcal{M}^*(f)$, 
Theorem \ref{basic}(i) implies that
$\mathcal{M}^*(f)\cup\mathcal{M}^{\mp}(f)=\{\tilde{f}\}\cup\mathcal{M}^{\pm}(\tilde{f})$
and the proof follows from Proposition \ref{fins1}(i) applied to the surface $\tilde{f}$.

(iii) By virtue of part (i), we assume that $f$ is non-superconformal.
The case where $H$ is parallel has been proved in Theorem \ref{M2}(ii). Assume that $H$ is non-parallel and
suppose to the contrary that $\mathcal{M}(f)$ is infinite.
From Theorem \ref{M2}(i) it follows that $\mathcal{M}^{\pm}(f)$ is finite. Since $g_{\mp}$ is 
not harmonic, Theorem \ref{basic}(i) implies that $\mathcal{M}^{\mp}(f)$ contains at most one congruence class and therefore,
$\mathcal{M}^*(f)$ is infinite. Theorem \ref{basic}(i) yields that
$\mathcal{M}^*(f)\cup\mathcal{M}^{\mp}(f)=\{\tilde{f}\}\cup\mathcal{M}^{\pm}(\tilde{f})$ for any $\tilde{f}\in\mathcal{M}^*(f)$,
which contradicts Theorem \ref{M2}(i) for $\tilde{f}$.
\qed

\begin{bibdiv}
\begin{biblist}

\bib{AR}{article}{
   author={Abresch, U.},
   author={Rosenberg, H.},
   title={A Hopf differential for constant mean curvature surfaces in ${\bf
   S}^2\times{\bf R}$ and ${\bf H}^2\times{\bf R}$},
   journal={Acta Math.},
   volume={193},
   date={2004},
   number={2},
   pages={141--174},
   issn={0001-5962},
   review={\MR{2134864}},
   doi={10.1007/BF02392562},
}

\bib{AAMU}{article}{
   author={Aiyama, R.},
   author={Akutagawa, K.},
   author={Miyaoka, R.},
   author={Umehara, M.},
   title={A global correspondence between CMC-surfaces in $S^3$ and pairs
   of non-conformal harmonic maps into $S^2$},
   journal={Proc. Amer. Math. Soc.},
   volume={128},
   date={2000},
   number={3},
   pages={939--941},
   issn={0002-9939},
   review={\MR{1707134}},
   doi={10.1090/S0002-9939-99-05580-X},
}

\bib{ACT}{article}{
   author={Alencar, H.},
   author={do Carmo, M.},
   author={Tribuzy, R.},
   title={A Hopf theorem for ambient spaces of dimensions higher than three},
   journal={J. Differential Geom.},
   volume={84},
   date={2010},
   number={1},
   pages={1--17},
   issn={0022-040X},
   review={\MR{2629507}},
}

\bib{BU}{article}{
   author={Bobenko, A.I.},
   author={Umehara, M.},
   title={Monodromy of isometric deformations of CMC surfaces},
   journal={Hiroshima Math. J.},
   volume={31},
   date={2001},
   number={2},
   pages={291--297},
   issn={0018-2079},
   review={\MR{1849192}},
}

\bib{BWW}{article}{
   author={Bolton, J.},
   author={Willmore, T.J.},
   author={Woodward, L.M.},
   title={Immersions of surfaces into space forms},
   conference={
      title={Global differential geometry and global analysis 1984 (Berlin,
      1984)},
   },
   book={
      series={Lecture Notes in Math.},
      volume={1156},
      publisher={Springer, Berlin},
   },
   date={1985},
   pages={46--58},
   review={\MR{824061}},
   doi={10.1007/BFb0075085},
}

\bib{B}{article}{
   author={Bonnet, O.},
   title={M\'{e}moire sur la th\'{e}orie des surfaces applicables},
   journal={J. \'{E}c. Polyt.},
   volume={42},
   date={1867},
   pages={72--92},
}

\bib{Cal}{article}{
   author={Calabi, E.},
   title={Minimal immersions of surfaces in Euclidean spheres},
   journal={J. Differential Geometry},
   volume={1},
   date={1967},
   pages={111--125},
   issn={0022-040X},
   review={\MR{0233294}},
}

\bib{Ca}{article}{
   author={Cartan, {\'E}.},
   title={Sur les couples de surfaces applicables avec conservation des
   courbures principales},
   language={French},
   journal={Bull. Sci. Math. (2)},
   volume={66},
   date={1942},
   pages={55--72, 74--85},
   issn={0007-4497},
   review={\MR{0009876}},
}

\bib{C}{article}{
   author={Castro, I.},
   title={Lagrangian surfaces with circular ellipse of curvature in complex
   space forms},
   journal={Math. Proc. Cambridge Philos. Soc.},
   volume={136},
   date={2004},
   number={1},
   pages={239--245},
   issn={0305-0041},
   review={\MR{2034586}},
   doi={10.1017/S0305004103007126},
}

\bib{CU}{article}{
   author={Castro, I.},
   author={Urbano, F.},
   title={Lagrangian surfaces in the complex Euclidean plane with conformal
   Maslov form},
   journal={Tohoku Math. J. (2)},
   volume={45},
   date={1993},
   number={4},
   pages={565--582},
   issn={0040-8735},
   review={\MR{1245723}},
   doi={10.2748/tmj/1178225850},
}

\bib{CU1}{article}{
   author={Castro, I.},
   author={Urbano, F.},
   title={Examples of unstable Hamiltonian-minimal Lagrangian tori in $\bold
   C^2$},
   journal={Compositio Math.},
   volume={111},
   date={1998},
   number={1},
   pages={1--14},
   issn={0010-437X},
   review={\MR{1611051}},
   doi={10.1023/A:1000332524827},
}

\bib{Chen}{article}{
   author={Chen, B.-Y.},
   title={On the surface with parallel mean curvature vector},
   journal={Indiana Univ. Math. J.},
   volume={22},
   date={1972/73},
   pages={655--666},
   issn={0022-2518},
   review={\MR{0315606}},
}

\bib{Ch}{article}{
   author={Chern, S.S.},
   title={On the minimal immersions of the two-sphere in a space of constant
   curvature},
   conference={
      title={Problems in analysis},
      address={Lectures at the Sympos. in honor of Salomon Bochner,
      Princeton Univ., Princeton, N.J.},
      date={1969},
   },
   book={
      publisher={Princeton Univ. Press, Princeton, N.J.},
   },
   date={1970},
   pages={27--40},
   review={\MR{0362151}},
}

\bib{Chern}{article}{
   author={Chern, S. S.},
   title={On surfaces of constant mean curvature in a three-dimensional
   space of constant curvature},
   conference={
      title={Geometric dynamics},
      address={Rio de Janeiro},
      date={1981},
   },
   book={
      series={Lecture Notes in Math.},
      volume={1007},
      publisher={Springer, Berlin},
   },
   date={1983},
   pages={104--108},
   review={\MR{730266}},
}

\bib{Ch2}{article}{
   author={Chern, S.S.},
   title={Deformation of surfaces preserving principal curvatures},
   conference={
      title={Differential geometry and complex analysis},
   },
   book={
      publisher={Springer, Berlin},
   },
   date={1985},
   pages={155--163},
   review={\MR{780041}},
}

\bib{DG2}{article}{
   author={Dajczer, M.},
   author={Gromoll, D.},
   title={Real Kaehler submanifolds and uniqueness of the Gauss map},
   journal={J. Differential Geom.},
   volume={22},
   date={1985},
   number={1},
   pages={13--28},
   issn={0022-040X},
   review={\MR{826421}},
}

\bib{DG1}{article}{
   author={Dajczer, M.},
   author={Gromoll, D.},
   title={Euclidean hypersurfaces with isometric Gauss maps},
   journal={Math. Z.},
   volume={191},
   date={1986},
   number={2},
   pages={201--205},
   issn={0025-5874},
   review={\MR{818664}},
   doi={10.1007/BF01164024},
}

\bib{DT}{article}{
   author={Dajczer, M.},
   author={Tojeiro, R.},
   title={All superconformal surfaces in $\Bbb R^4$ in terms of minimal
   surfaces},
   journal={Math. Z.},
   volume={261},
   date={2009},
   number={4},
   pages={869--890},
   issn={0025-5874},
   review={\MR{2480762}},
}

\bib{DV}{article}{
   author={Dajczer, M.},
   author={Vlachos, Th.},
   title={Isometric deformations of isotropic surfaces},
   journal={Arch. Math. (Basel)},
   volume={106},
   date={2016},
   number={2},
   pages={189--200},
   issn={0003-889X},
   review={\MR{3453994}},
   doi={10.1007/s00013-015-0857-z},
}

\bib{Da}{article}{
   author={Daniel, B.},
   title={Isometric immersions into 3-dimensional homogeneous manifolds},
   journal={Comment. Math. Helv.},
   volume={82},
   date={2007},
   number={1},
   pages={87--131},
   issn={0010-2571},
   review={\MR{2296059}},
   doi={10.4171/CMH/86},
}

\bib{ES}{article}{
   author={Eells, J.},
   author={Salamon, S.},
   title={Twistorial construction of harmonic maps of surfaces into
   four-manifolds},
   journal={Ann. Scuola Norm. Sup. Pisa Cl. Sci. (4)},
   volume={12},
   date={1985},
   number={4},
   pages={589--640 (1986)},
   issn={0391-173X},
   review={\MR{848842}},
}

\bib{EGT}{article}{
   author={Eschenburg, J.H.},
   author={Guadalupe, I.V.},
   author={Tribuzy, R.},
   title={The fundamental equations of minimal surfaces in ${\bf C}{\rm
   P}^2$},
   journal={Math. Ann.},
   volume={270},
   date={1985},
   number={4},
   pages={571--598},
   issn={0025-5831},
   review={\MR{776173}},
   doi={10.1007/BF01455305},
}

\bib{ET2}{article}{
   author={Eschenburg, J.H.},
   author={Tribuzy, R.},
   title={Constant mean curvature surfaces in $4$-space forms},
   journal={Rend. Sem. Mat. Univ. Padova},
   volume={79},
   date={1988},
   pages={185--202},
   issn={0041-8994},
   review={\MR{964030}},
}

\bib{Fe}{article}{
   author={Fetcu, D.},
   title={Surfaces with parallel mean curvature vector in complex space
   forms},
   journal={J. Differential Geom.},
   volume={91},
   date={2012},
   number={2},
   pages={215--232},
   issn={0022-040X},
   review={\MR{2971287}},
}

\bib{FR}{article}{
   author={Fetcu, D.},
   author={Rosenberg, H.},
   title={Surfaces with parallel mean curvature in $\Bbb{S}^3\times\Bbb{R}$
   and $\Bbb{H}^3\times\Bbb{R}$},
   journal={Michigan Math. J.},
   volume={61},
   date={2012},
   number={4},
   pages={715--729},
   issn={0026-2285},
   review={\MR{3049287}},
}

\bib{Fr}{article}{
   author={Friedrich, T.},
   title={On surfaces in four-spaces},
   journal={Ann. Global Anal. Geom.},
   volume={2},
   date={1984},
   number={3},
   pages={257--287},
   issn={0232-704X},
   review={\MR{777909}},
   doi={10.1007/BF01876417},
}

\bib{GMM}{article}{
   author={G{\'a}lvez, J.A.},
   author={Mart{\'{\i}}nez, A.},
   author={Mira, P.},
   title={The Bonnet problem for surfaces in homogeneous 3-manifolds},
   journal={Comm. Anal. Geom.},
   volume={16},
   date={2008},
   number={5},
   pages={907--935},
   issn={1019-8385},
   review={\MR{2471362}},
   doi={10.4310/CAG.2008.v16.n5.a1},
}

\bib{GR}{article}{
   author={Guadalupe, I.V.},
   author={Rodriguez, L.},
   title={Normal curvature of surfaces in space forms},
   journal={Pacific J. Math.},
   volume={106},
   date={1983},
   number={1},
   pages={95--103},
   issn={0030-8730},
   review={\MR{694674}},
}

\bib{Ha}{article}{
   author={Hasegawa, K.},
   title={On surfaces whose twistor lifts are harmonic sections},
   journal={J. Geom. Phys.},
   volume={57},
   date={2007},
   number={7},
   pages={1549--1566},
   issn={0393-0440},
   review={\MR{2310605}},
   doi={10.1016/j.geomphys.2007.01.004},
}

\bib{Ha2}{article}{
   author={Hasegawa, K.},
   title={Surfaces in four-dimensional hyperK\"ahler manifolds whose twistor
   lifts are harmonic sections},
   journal={Proc. Amer. Math. Soc.},
   volume={139},
   date={2011},
   number={1},
   pages={309--317},
   issn={0002-9939},
   review={\MR{2729093}},
}

\bib{HMW}{article}{
   author={He, H.},
   author={Ma, H.},
   author={Wang, E.},
   title={Lagrangian Bonnet Pairs in Complex Space Forms},
   journal={ArXiv e-prints},
   eprint = {http://arxiv.org/abs/1503.08566},
   year = {2015},
}

\bib{HR}{article}{
   author={H{\'e}lein, F.},
   author={Romon, P.},
   title={Weierstrass representation of Lagrangian surfaces in
   four-dimensional space using spinors and quaternions},
   journal={Comment. Math. Helv.},
   volume={75},
   date={2000},
   number={4},
   pages={668--680},
   issn={0010-2571},
   review={\MR{1789181}},
   doi={10.1007/s000140050144},
}

\bib{Hof}{article}{
   author={Hoffman, D.A.},
   title={Surfaces of constant mean curvature in manifolds of constant
   curvature},
   journal={J. Differential Geometry},
   volume={8},
   date={1973},
   pages={161--176},
   issn={0022-040X},
   review={\MR{0390973}},
}

\bib{HO}{article}{
   author={Hoffman, D.A.},
   author={Osserman, R.},
   title={The Gauss map of surfaces in ${\bf R}^3$ and ${\bf R}^4$},
   journal={Proc. London Math. Soc. (3)},
   volume={50},
   date={1985},
   number={1},
   pages={27--56},
   issn={0024-6115},
   review={\MR{765367}},
   doi={10.1112/plms/s3-50.1.27},
}

\bib{Hopf}{article}{
   author={Hopf, H.},
   title={\"Uber Fl\"achen mit einer Relation zwischen den Hauptkr\"ummungen},
   language={German},
   journal={Math. Nachr.},
   volume={4},
   date={1951},
   pages={232--249},
   issn={0025-584X},
   review={\MR{0040042}},
}

\bib{JMN}{article}{
   author={Jensen, G.R.},
   author={Musso, E.},
   author={Nicolodi, L.},
   title={Compact Surfaces with No Bonnet Mate},
   journal={J. Geom. Anal.},
   doi={10.1007/s12220-017-9924-y},
}

\bib{JR}{article}{
   author={Jensen, G.R.},
   author={Rigoli, M.},
   title={Twistor and Gauss lifts of surfaces in four-manifolds},
   conference={
      title={Recent developments in geometry},
      address={Los Angeles, CA},
      date={1987},
   },
   book={
      series={Contemp. Math.},
      volume={101},
      publisher={Amer. Math. Soc., Providence, RI},
   },
   date={1989},
   pages={197--232},
   review={\MR{1034983}},
   doi={10.1090/conm/101/1034983},
}

\bib{Joh}{article}{
   author={Johnson, G.D.},
   title={An intrinsic characterization of a class of minimal surfaces in
   constant curvature manifolds},
   journal={Pacific J. Math.},
   volume={149},
   date={1991},
   number={1},
   pages={113--125},
   issn={0030-8730},
   review={\MR{1099786}},
}

\bib{K}{article}{
   author={Kenmotsu, K.},
   title={An intrinsic characterization of $H$-deformable surfaces},
   journal={J. London Math. Soc. (2)},
   volume={49},
   date={1994},
   number={3},
   pages={555--568},
   issn={0024-6107},
   review={\MR{1271550}},
   doi={10.1112/jlms/49.3.555},
}

\bib{Ken}{article}{
   author={Kenmotsu, K.},
   title={Correction to ``The classification of the surfaces with parallel
   mean curvature vector in two-dimensional complex space forms''},
   journal={Amer. J. Math.},
   volume={138},
   date={2016},
   number={2},
   pages={395--402},
   issn={0002-9327},
   review={\MR{3483469}},
}

\bib{KZ}{article}{
   author={Kenmotsu, K.},
   author={Zhou, D.},
   title={The classification of the surfaces with parallel mean curvature
   vector in two-dimensional complex space forms},
   journal={Amer. J. Math.},
   volume={122},
   date={2000},
   number={2},
   pages={295--317},
   issn={0002-9327},
   review={\MR{1749050}},
}

\bib{KP}{book}{
   author={Krantz, S.G.},
   author={Parks, H.R.},
   title={A primer of real analytic functions},
   series={Birkh\"auser Advanced Texts: Basler Lehrb\"ucher. [Birkh\"auser
   Advanced Texts: Basel Textbooks]},
   edition={2},
   publisher={Birkh\"auser Boston, Inc., Boston, MA},
   date={2002},
   pages={xiv+205},
   isbn={0-8176-4264-1},
   review={\MR{1916029}},
   doi={10.1007/978-0-8176-8134-0},
}

\bib{L}{article}{
   author={Lawson, H.B.},
   title={Complete minimal surfaces in $S^{3}$},
   journal={Ann. of Math. (2)},
   volume={92},
   date={1970},
   pages={335--374},
   issn={0003-486X},
   review={\MR{0270280}},
}

\bib{LT}{article}{
   author={Lawson, H.B.},
   author={Tribuzy, R.},
   title={On the mean curvature function for compact surfaces},
   journal={J. Differential Geom.},
   volume={16},
   date={1981},
   number={2},
   pages={179--183},
   issn={0022-040X},
   review={\MR{638784}},
}

\bib{LV}{article}{
   author={De Lira, J.},
   author={Vit\'orio, F.},
   title={Surfaces with constant mean curvature in Riemannian products},
   journal={Q. J. Math.},
   volume={61},
   date={2010},
   number={1},
   pages={33--41},
   issn={0033-5606},
   review={\MR{2592022}},
}

\bib{Little}{article}{
   author={Little, J.A.},
   title={On singularities of submanifolds of higher dimensional Euclidean
   spaces},
   journal={Ann. Mat. Pura Appl. (4)},
   volume={83},
   date={1969},
   pages={261--335},
   issn={0003-4622},
   review={\MR{0271970}},
}

\bib{RH}{article}{
   author={Roussos, I.M.},
   author={Hern{\'a}ndez, G.E.},
   title={On the number of distinct isometric immersions of a Riemannian
   surface into ${\bf R}^3$ with given mean curvature},
   journal={Amer. J. Math.},
   volume={112},
   date={1990},
   number={1},
   pages={71--85},
   issn={0002-9327},
   review={\MR{1037603}},
   doi={10.2307/2374853},
}

\bib{RV}{article}{
   author={Ruh, E.A.},
   author={Vilms, J.},
   title={The tension field of the Gauss map},
   journal={Trans. Amer. Math. Soc.},
   volume={149},
   date={1970},
   pages={569--573},
   issn={0002-9947},
   review={\MR{0259768}},
}

\bib{ST}{article}{
   author={Smyth, B.},
   author={Tinaglia, G.},
   title={The number of constant mean curvature isometric immersions of a
   surface},
   journal={Comment. Math. Helv.},
   volume={88},
   date={2013},
   number={1},
   pages={163--183},
   issn={0010-2571},
   review={\MR{3008916}},
   doi={10.4171/CMH/281},
}

\bib{TU}{article}{
   author={Torralbo, F.},
   author={Urbano, F.},
   title={Surfaces with parallel mean curvature vector in ${\Bbb
   S}^2\times{\Bbb S}^2$ and ${\Bbb H}^2\times{\Bbb H}^2$},
   journal={Trans. Amer. Math. Soc.},
   volume={364},
   date={2012},
   number={2},
   pages={785--813},
   issn={0002-9947},
   review={\MR{2846353}},
}

\bib{Tr}{article}{
   author={Tribuzy, R.},
   title={A characterization of tori with constant mean curvature in space
   form},
   journal={Bol. Soc. Brasil. Mat.},
   volume={11},
   date={1980},
   number={2},
   pages={259--274},
   issn={0100-3569},
   review={\MR{671469}},
   doi={10.1007/BF02584641},
}

\bib{Vl1}{article}{
   author={Vlachos, Th.},
   title={Congruence of minimal surfaces and higher fundamental forms},
   journal={Manuscripta Math.},
   volume={110},
   date={2003},
   number={1},
   pages={77--91},
   issn={0025-2611},
   review={\MR{1951801}},
   doi={10.1007/s00229-002-0310-z},
}

\bib{Vl2}{article}{
   author={Vlachos, Th.},
   title={Isometric deformations of minimal surfaces in $\Bbb{S}^4$},
   journal={Illinois J. Math.},
   volume={58},
   date={2014},
   number={2},
   pages={369--380},
   issn={0019-2082},
   review={\MR{3367653}},
}

\bib{Yau}{article}{
   author={Yau, S.T.},
   title={Submanifolds with constant mean curvature. I, II},
   journal={Amer. J. Math.},
   volume={96},
   date={1974},
   pages={346--366; ibid. 97 (1975), 76--100},
   issn={0002-9327},
   review={\MR{0370443}},
}

\end{biblist}
\end{bibdiv}

\vspace{.1in} 
{\renewcommand{\baselinestretch}{1}
\hspace*{-20ex}\begin{tabbing} \indent\= Univ. of Ioannina -- Math. Dept.
\indent\indent\= Univ. of Ioannina -- Math. Dept. \\
\> 45110 Ioannina -- Greece  \>
45110 Ioannina -- Greece \\
\> E-mail: kpolymer@cc.uoi.gr \> E-mail: tvlachos@uoi.gr
\end{tabbing}}
\end{document}